\numberwithin{equation}{section}
\newtheorem{theorem}{Theorem}[section]
\newtheorem{lemma}[theorem]{Lemma}
\newtheorem{proposition}[theorem]{Proposition}
\newtheorem{corollary}[theorem]{Corollary}
\newenvironment{prf}[1]{\trivlist
\item[\hskip
\labelsep{\it #1.\hspace*{.3em}}]}{
\endtrivlist}
\newtheorem{predefinition}[theorem]{Definition}
\newtheorem{preremark}[theorem]{Remark}
\newenvironment{remark}{\begin{preremark}\rm}{\end{preremark}}
\newtheorem{prenotation}[theorem]{Notation}
\newenvironment{notation}{\begin{prenotation}\rm}{\end{prenotation}}
\newtheorem{preexample}[theorem]{Example}
\newenvironment{example}{\begin{preexample}\rm}{\end{preexample}}
\newtheorem{preclaim}[theorem]{Claim}
\newtheorem{prequestion}[theorem]{Question}
\def\emppsubsection{\@startsection{subsection}{2}{\z@}{-3.25ex plus -1ex minus -.2ex}{-1em}{\bf}}
\newcommand \nil {{\text{\rm nil}}}
\newcommand \sss{{\text{\rm ss}}}
\newcommand \ZZ {{\mathbb Z}}
\newcommand  \FF {{\mathbb F}}
\newcommand \GG {{\mathbb G}}
\newcommand \dime {\mathop{\rm dim}}
\newcommand \di {{\textstyle\mathop{\rm div}}}
\newcommand \Hom {\mathop{\rm Hom}}
\DeclareMathOperator{\HH}{H}
\def\Cal{\mathcal}
\def\Bb{\mathbb}
\def\bE{{\Bb E}}
\def\bP{{\Bb P}}
\def\Z{{\Bb Z}}
\def\NN{{\Bb N}}
\def\cO{{\Cal O}}
\def\cU{{\Cal U}}
\def\cF{{\Cal F}}
\def\cov{{\pi}}		
\def\dx{{\,dx}}
\def\dy{{\,dy}}
\def\xalpha{{(x-\alpha)^{-1}}}
\def\set#1{\left\{#1\right\}}
\def\gen#1{\bigl\langle#1\bigr\rangle}
\DeclareMathOperator{\CH}{{H}}
\DeclareMathOperator{\CB}{{B}}
\DeclareMathOperator{\CZ}{{Z}}
\DeclareMathOperator{\CC}{{C}}
\DeclareMathOperator{\OOp}{{\Theta}}
\def\HdR{{\CH_\text{\rm dR}^1}}
\def\BdR{{\CB_\text{\rm dR}^1}}
\def\ZdR{{\CZ_\text{\rm dR}^1}}
\def\cC{{\mathscr C}}
\begin{document}

\title{Ekedahl-Oort strata of hyperelliptic curves in characteristic $2$}
\author{Arsen Elkin}
\address{Mathematics Institute\\
University of Warwick\\
Coventry CV4 7AL\\
UK}
\email{A.Elkin@warwick.ac.uk}

\author{Rachel Pries}
\address{Department of Mathematics\\
Colorado State University\\
Fort Collins, CO 80523\\
USA}
\email{pries@math.colostate.edu}

\thanks{The first author was partially supported by the Marie Curie Incoming International Fellowship PIIF-GA-2009-236606.  The second author was partially supported by NSF grant DMS-07-01303. The authors would like to thank Jeff Achter for helpful comments.}

\date{}

\begin{abstract}
Suppose $X$ is a hyperelliptic curve of genus $g$ defined over an algebraically closed field $k$ of characteristic $p=2$.  
We prove that the de Rham cohomology of $X$ decomposes into pieces 
indexed by the branch points of the hyperelliptic cover. 
This allows us to compute the isomorphism class of the $2$-torsion group scheme $J_X[2]$ of the Jacobian of $X$
in terms of the Ekedahl-Oort type.
The interesting feature is that $J_X[2]$ depends only on some discrete invariants of $X$, 
namely, on the ramification invariants associated with the branch points.
We give a complete classification of the group schemes which occur 
as the $2$-torsion group schemes of Jacobians of hyperelliptic $k$-curves of arbitrary genus.\\
Keywords: curve, hyperelliptic, Artin-Schreier, Jacobian, p-torsion, a-number, group scheme, de Rham cohomology, Ekedahl-Oort strata.\\
MSC: 11G10, 11G20, 14F40, 14H40, 14K15, 14L15
\end{abstract}

\maketitle

\section{Introduction}

Suppose $k$ is an algebraically closed field of characteristic $p>0$.
There are several important stratifications of the moduli space ${\mathcal A}_g$ of
principally polarized abelian varieties of dimension $g$ defined over $k$, including the Ekedahl-Oort stratification.
The Ekedahl-Oort type characterizes the $p$-torsion group scheme of the corresponding abelian varieties, 
and, in particular, determines invariants of the group scheme such as the $p$-rank and $a$-number.
It is defined by the interaction between 
the Frobenius $F$ and Verschiebung $V$ operators on the $p$-torsion group scheme.
Very little is known about how the Ekedahl-Oort strata intersect the Torelli locus of Jacobians of curves.
In particular, one would like to know which group schemes occur as the $p$-torsion $J_X[p]$ 
of the Jacobian $J_X$ of a curve $X$ of genus $g$.

In this paper, we completely answer this question for hyperelliptic $k$-curves $X$ of arbitrary genus when $k$ has characteristic $p=2$, 
a case which is amenable to calculation because of the confluence of hyperelliptic and Artin-Schreier properties.
We first prove a decomposition result about the structure of $\HdR(X)$ as a module under the actions of $F$ and $V$,
where the pieces of the decomposition are indexed by the branch points of the hyperelliptic cover.
This is the only decomposition result about the de Rham cohomology of Artin-Schreier curves that we know of,  
although the action of $V$ on $\HH^0(X, \Omega^1)$ and the action of $F$ on $\HH^1(X, \cO)$ have been studied 
for Artin-Schreier curves under less restrictive hypotheses (e.g., \cite{madden, Sullivan}). 

In the second result of the paper,  
we give a complete classification of the isomorphism classes of group schemes which occur as 
the $2$-torsion group scheme $J_X[2]$ for a hyperelliptic $k$-curve $X$ of arbitrary genus when ${\rm char}(k)=2$.  
The group schemes which occur decompose into pieces indexed by the branch points of the hyperelliptic cover, 
and we determine the Ekedahl-Oort types of these pieces.
In particular, we determine which $a$-numbers occur for the $2$-torsion group schemes of hyperelliptic $k$-curves of arbitrary genus
when ${\rm char}(k)=2$.  Before describing the result precisely, 
we note that it shows that the group scheme $J_X[2]$ depends only on some discrete 
invariants of $X$ and not on the location of the branch points or the equation of the hyperelliptic cover.
This is in sharp contrast with the case of hyperelliptic curves in odd characteristic $p$, where even the 
$p$-rank depends on the location of the branch points, \cite{Y}.

Here is some notation needed to describe the results precisely.

\begin{notation} \label{Nsetup}  
Suppose $X$ is a hyperelliptic curve defined over an algebraically closed field $k$ of characteristic $2$.
Let $B \subset \bP^1(k)$ be the set of branch points of the hyperelliptic cover $\cov: X \to \bP^1_k$.
Without loss of generality, one can suppose $\infty \not \in B$.
For $\alpha \in B$, the {\it ramification invariant} $d_\alpha$   
is the largest integer for which the higher ramification group of $\cov$ above $\alpha$ is non-trivial; 
note that $d_\alpha$ is odd. 
Then $X$ has an affine equation of the form $y^2-y=f(x)$
for some $f(x) \in k(x)$.
Furthermore, $f(x)$ can be chosen such that its partial fraction decomposition has the form:
\begin{equation} \label{eqn:pfd}
f(x)=\sum_{\alpha\in B} f_{\alpha} \big(x_\alpha\big)
\end{equation}
where  $x_\alpha:=(x-\alpha)^{-1}$ and where
$f_\alpha(x) \in xk[x^2]$ is a polynomial of degree $d_\alpha$ containing no monomials of even exponent.
In particular, the divisor of poles of $f(x)$ on $\bP_k^1$ has the form
$$
{\di}_\infty(f(x)) = \sum_{\alpha\in B} d_{\alpha} \alpha.
$$
Let $c_\alpha=(d_\alpha-1)/2$ and let $r=\#B-1$.
\end{notation}

By the Riemann-Hurwitz formula, the genus of $X$ is $g=r+\sum_{\alpha \in B} c_\alpha$.
By the Deuring-Shafarevich formula, the $p$-rank of $J_X$ equals $r$.

\begin{theorem} \label{Tmainresult}
Suppose $X$ is a hyperelliptic $k$-curve with affine equation $y^2-y=f(x)$ as described in Notation \ref{Nsetup}.
For $\alpha \in B$, consider the Artin-Schreier $k$-curve $Y_\alpha$ with affine equation $y^2-y=f_\alpha(x)$.
Let $E$ be an ordinary elliptic $k$-curve.
As a module under the actions of $F$ and $V$, the de Rham cohomology of $X$ decomposes as: 
$$
\HdR(X) \cong  \HdR(E)^{\#B-1} \oplus \bigoplus_{\alpha\in B} \HdR(Y_\alpha).
$$
\end{theorem}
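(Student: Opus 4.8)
The plan is to compute $\HdR(X)$ explicitly through the Hodge filtration
$0 \to \HH^0(X,\Omega^1_X) \to \HdR(X) \to \HH^1(X,\cO_X) \to 0$
and to localize every class at the branch points using the partial fraction decomposition of Notation~\ref{Nsetup}. Since $\cov\colon X \to \bP^1_k$ is a $\ZZ/2$-cover and $\infty \notin B$, the standard pushforward computation gives $\cov_*\cO_X \cong \cO_{\bP^1} \oplus \cO_{\bP^1}(-D)$, where $D = \sum_{\alpha \in B}(c_\alpha+1)\alpha$ is supported on $B$ and has degree $g+1$. Because $\bP^1$ carries no higher cohomology of $\cO$ and no regular $1$-forms, all of $\HH^1(X,\cO_X)$ and $\HH^0(X,\Omega^1_X)$ live in the twisted summand, and I would read $\HH^1(X,\cO_X)$ off from the skyscraper sequence $0 \to \cO_{\bP^1}(-D) \to \cO_{\bP^1} \to \cO_D \to 0$. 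This yields $\HH^1(X,\cO_X) \cong \HH^0(\bP^1,\cO_D)/k \cong \big(\bigoplus_{\alpha\in B}\cO_{\bP^1}/\mathfrak m_\alpha^{c_\alpha+1}\big)/k$, and dually a matching description of $\HH^0(X,\Omega^1_X)$; the key point is that both are direct sums of $\mathfrak m_\alpha$-local pieces, modulo a single global relation coming from the quotient by the constants.

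Next I would compute the Frobenius $F$ and Verschiebung $V$ in these coordinates. On $\HH^1(X,\cO_X)$ the operator $F$ is induced by the $p$-power map, i.e.\ by squaring principal parts, so on the $\alpha$-local factor with uniformizer $(x-\alpha)^{-1}$ it sends $(x-\alpha)^{-j} \mapsto (x-\alpha)^{-2j}$ and fixes the class of the constant $1$; on $\HH^0(X,\Omega^1_X)$ the operator $V$ is the Cartier operator $\mathcal C$, with $\mathcal C\big((x-\alpha)^{n}\dx\big)$ equal to $(x-\alpha)^{(n-1)/2}\dx$ for $n$ odd and $0$ otherwise, while $\mathcal C(h^2\omega)=h\,\mathcal C(\omega)$ together with $y^2 = y + f$ handles the $y$-part. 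In each case the operator preserves the pole location $\alpha$, so both $F$ and $V$ respect the per-branch-point decomposition and fix the global constant. This is the structural reason the decomposition exists.

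I would then assemble $\HdR(X)$ itself, choosing de Rham representatives (differentials of the second kind) whose principal parts are supported at a single $\alpha$, so that $\HdR(X)$ splits as an $F,V$-module into $\big(\bigoplus_{\alpha\in B} M_\alpha\big)\oplus M_{\mathrm{ord}}$, where $M_\alpha$ is built from the $\mathfrak m_\alpha$-local factors of positive order and $M_{\mathrm{ord}}$ from the constants modulo the global relation. To identify $M_\alpha \cong \HdR(Y_\alpha)$ I would use that $x \mapsto x_\alpha = (x-\alpha)^{-1}$ is an automorphism of $\bP^1_k$ carrying $\alpha$ to $\infty$ and $f_\alpha(x_\alpha)$ to $f_\alpha(x)$; it therefore identifies the $\mathfrak m_\alpha$-local structure of $X$ with the $\mathfrak m_\infty$-local structure of $Y_\alpha$, and because $F$ and $V$ are intrinsic this identification is $F,V$-equivariant. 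Finally, $M_{\mathrm{ord}}$ has dimension $2r$ and, by the computations above, $F$ and $V$ act bijectively on it, consistent with the Deuring--Shafarevich $p$-rank $r$; hence it is an ordinary $F,V$-module of rank $2r$ over the algebraically closed field $k$, and therefore isomorphic to $\HdR(E)^{r}$.

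The hard part will be the assembly step: promoting the clean localization of the graded pieces $\HH^1(X,\cO_X)$ and $\HH^0(X,\Omega^1_X)$ to a localization of $\HdR(X)$ as an $F,V$-module. One must produce second-kind representatives that are simultaneously compatible with $F$ on $\HH^1(X,\cO_X)$ and with $V=\mathcal C$ on $\HH^0(X,\Omega^1_X)$ and whose principal parts sit at a single branch point, and then show that the unavoidable global residue relation among the simple poles is absorbed exactly into $M_{\mathrm{ord}}$ rather than leaking into the local summands $M_\alpha$. Verifying this compatibility, namely that the Hodge extension splits along the same decomposition and that the coordinate change is $F,V$-equivariant on $\HdR(X)$ itself and not merely on the associated graded, is the technical heart of the argument; everything else is bookkeeping with the partial fraction decomposition.
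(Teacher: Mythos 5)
Your strategy is in essence the paper's own: decompose $\HH^0(X,\Omega^1)$ and $\HH^1(X,\cO)$ into branch-point-local pieces via the partial fraction decomposition, check that $V=\cC$ and $F$ preserve the pieces on each graded quotient, then lift to $\HdR(X)$. (Your use of $\cov_*\cO_X\cong\cO\oplus\cO(-D)$ and the skyscraper sequence is a cleaner packaging of the paper's explicit \v{C}ech cocycles $\varphi_\alpha(y(x-\alpha)^j)$, and the two descriptions of $\HH^1(X,\cO)$ agree.) The problem is that you have only sketched, and explicitly deferred, the one step that carries the mathematical content of the theorem: the construction of de Rham lifts of the $\HH^1(X,\cO)$-classes on which $F$ and $V$ act within the local blocks. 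As you note yourself, this is ``the technical heart of the argument,'' and a proof must actually supply it. Concretely: the naive lift of $y\,x_\alpha^{-j}$ has differential $d(yx_\alpha^{-j})=-jx_\alpha^{-j-1}y\,dx_\alpha+x_\alpha^{-j}dy$ with $dy=-\sum_{\beta\in B}f_\beta'(x_\beta)\,dx_\beta$, which has poles at \emph{every} branch point; producing a cocycle pair $(\phi,\omega)$ with $\delta\phi=0$, $d\phi=\delta\omega$ and components regular on the right opens requires splitting $d(yx_\alpha^{-j})$ into the truncation $R_{\alpha,j}=\OOp_{\geq 0}(x_\alpha^{-j}f_\alpha'(x_\alpha))\,dx_\alpha$ (regular away from $P_\alpha$) and the complementary piece $S_{\alpha,j}$ (regular at $P_\alpha$). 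This is the paper's auxiliary map $\rho$ and its Lemma on the regularity of $R_{\alpha,j}$ and $S_{\alpha,j}$; nothing in your outline produces these representatives.

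Moreover, once the lifts $\sigma_{\alpha,j}$ exist, the Hodge extension does \emph{not} split $F,V$-equivariantly: one finds $F\sigma_{\alpha,j}=\lambda(R_{\alpha,2j})$ when $2j>c_\alpha$ and $V\sigma_{\alpha,j}=\lambda(\cC(R_{\alpha,j}))$ when $j$ is odd, i.e.\ nonzero holomorphic correction terms appear, and one must verify (by an index computation on the monomials of $f_\alpha$) that these land in the span of the $\omega_{\alpha,j'}$ with $1\leq j'\leq c_\alpha$ and not in some other block. These correction terms are not bookkeeping: they are exactly what determines the $\bE$-module class of each block (hence the Ekedahl--Oort type downstream), so an argument that only controls the associated graded proves the right dimensions but not the stated isomorphism. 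Two smaller points: your claim that $F$ and $V$ ``act bijectively'' on $M_{\mathrm{ord}}$ is false as stated (on $\HdR(E)$ for ordinary $E$ both $F$ and $V$ have $g$-dimensional kernels); the correct statement is that $M_{\mathrm{ord}}=\ker F\oplus\ker V$ with $V$ (resp.\ $F$) bijective on the first (resp.\ second) summand, which does force $M_{\mathrm{ord}}\cong\HdR(E)^r$. And the identification $M_\alpha\cong\HdR(Y_\alpha)$ is best seen not from an abstract equivariance of the coordinate change but from the fact that the structure constants of the block depend only on the coefficients of $f_\alpha$, so the same computation applied to $Y_\alpha$ (which has a single branch point) returns the identical $\bE$-module.
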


As an application of Theorem \ref{Tmainresult}, we give a complete classification of the Ekedahl-Oort types 
which occur for hyperelliptic $k$-curves.
For $p=2$ and a natural number $c$, 
let $G_c$ be the unique symmetric ${\rm BT}_1$ group scheme of rank $p^{2c}$ with Ekedahl-Oort type
$[0,1,1,2,2, \ldots,  \lfloor c/2 \rfloor]$.
For example, $G_1$ is the $p$-torsion group scheme of a supersingular elliptic $k$-curve.
The group scheme $G_2$ occurs as the $p$-torsion of a supersingular non-superspecial abelian surface over $k$.
The group scheme $G_c$ is not necessarily indecomposable.
More explanation about $G_c$ is given in Sections \ref{Seotype} and \ref{SdesEO}.
The $a$-number of $X$ is $a_X := {\rm dim}_k {\rm Hom}(\alpha_p, J_X[p])$,
where $\alpha_p$ is the kernel of Frobenius on ${\mathbb G}_a$.
 
\begin{theorem} \label{Tintro}
Suppose $X$ is a hyperelliptic $k$-curve with affine equation $y^2-y=f(x)$ as described in Notation \ref{Nsetup}.
Then the $2$-torsion group scheme of the Jacobian variety of $X$ is 
$$
J_X[2] \simeq  (\ZZ/2 \oplus \mu_2)^r \oplus \bigoplus_{\alpha \in B} G_{c_{\alpha}},
$$
and the $a$-number of $X$ is $a_X=(g+1-\#\set{\alpha \in B \  \mid \ d_\alpha \equiv 1 \bmod 4})/2$.
\end{theorem}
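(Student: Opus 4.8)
The plan is to convert the de Rham decomposition of Theorem \ref{Tmainresult} into a statement about group schemes via Dieudonn\'e theory, and then to identify each factor. The $2$-torsion group scheme $J_X[2]$ is a symmetric ${\rm BT}_1$ group scheme, and its (contravariant) Dieudonn\'e module is $\HdR(X)$ equipped with the operators $F$ and $V$. Since a direct sum decomposition of the Dieudonn\'e module as an $F,V$-module corresponds to a direct sum decomposition of the associated ${\rm BT}_1$ group scheme, and since the decomposition in Theorem \ref{Tmainresult} is compatible with the actions of $F$ and $V$, I would immediately obtain
$$
J_X[2] \simeq J_E[2]^{\#B-1} \oplus \bigoplus_{\alpha \in B} J_{Y_\alpha}[2].
$$
Because $E$ is an ordinary elliptic curve, its $2$-torsion is the \'etale-multiplicative group scheme $J_E[2] \simeq \ZZ/2 \oplus \mu_2$, which accounts for the factor $(\ZZ/2 \oplus \mu_2)^r$ with $r = \#B - 1$.

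It then remains to prove that $J_{Y_\alpha}[2] \simeq G_{c_\alpha}$ for each $\alpha \in B$. The curve $Y_\alpha \colon y^2 - y = f_\alpha(x)$ is an Artin--Schreier curve with a single branch point and ramification invariant $d_\alpha = 2c_\alpha + 1$, so by Deuring--Shafarevich it has $p$-rank $0$ and by Riemann--Hurwitz its genus is $c_\alpha$; hence $J_{Y_\alpha}[2]$ is a symmetric ${\rm BT}_1$ group scheme of rank $2^{2c_\alpha}$ with no \'etale part. To pin down its isomorphism class I would write an explicit basis of $\HdR(Y_\alpha)$ adapted to the Hodge filtration $0 \to \HH^0(Y_\alpha, \Omega^1) \to \HdR(Y_\alpha) \to \HH^1(Y_\alpha, \cO) \to 0$, using the standard monomial differentials of the first and second kind for $y^2 - y = f_\alpha(x)$, and compute the matrices of $V$ (the Cartier operator on the holomorphic part) and of $F$. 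Iterating $V$ and $F^{-1}$ produces the canonical filtration, whose jumps give the final sequence. By the classification of symmetric ${\rm BT}_1$ group schemes over the algebraically closed field $k$, showing that these jumps are exactly $[0,1,1,2,2,\ldots,\lfloor c_\alpha/2 \rfloor]$ identifies $J_{Y_\alpha}[2]$ with $G_{c_\alpha}$.

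Finally, I would deduce the $a$-number formula. The $a$-number is additive over direct sums, and the ordinary factors $\ZZ/2 \oplus \mu_2$ contribute $0$, so $a_X = \sum_{\alpha \in B} a(G_{c_\alpha})$. From the Ekedahl--Oort type of $G_c$ one reads $a(G_c) = c - \lfloor c/2 \rfloor = \lceil c/2 \rceil$. Since $d_\alpha \equiv 1 \bmod 4$ is equivalent to $c_\alpha$ being even, one has $\lceil c_\alpha/2 \rceil = c_\alpha/2$ when $d_\alpha \equiv 1 \bmod 4$ and $(c_\alpha+1)/2$ otherwise. Summing and substituting $\sum_{\alpha} c_\alpha = g - r = g - (\#B - 1)$ gives
$$
a_X = \frac{1}{2}\Bigl(g + 1 - \#\set{\alpha \in B \mid d_\alpha \equiv 1 \bmod 4}\Bigr),
$$
as claimed.

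The main obstacle is the middle step: the explicit determination of the Ekedahl--Oort type of the single-branch-point Artin--Schreier curve $Y_\alpha$. Everything else is formal once the $F,V$-compatible decomposition and the Dieudonn\'e dictionary are in place, but computing the canonical filtration requires a careful, concrete analysis of the Cartier operator and of $F$ on $\HdR(Y_\alpha)$ in the chosen monomial basis, together with verification that the resulting jumps are independent of the coefficients of $f_\alpha$.
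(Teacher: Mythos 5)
Your proposal is correct and follows essentially the same route as the paper: decompose $\HdR(X)$ compatibly with $F$ and $V$, identify the ordinary factors as $(\ZZ/2\oplus\mu_2)^r$, identify each local-local factor by exhibiting its final filtration, and read off the $a$-number. The step you defer as ``the main obstacle'' --- computing the canonical filtration of $\HdR(Y_\alpha)$ in a monomial basis --- is precisely what the paper supplies in Proposition \ref{PactionFV} and Proposition \ref{Pfiltration}, where the explicit formulas for $V\lambda_{\alpha,j}$, $F\sigma_{\alpha,j}$, and $V\sigma_{\alpha,j}$ on the subspace $W_{\alpha,\nil}$ (which is the image of $\HdR(Y_\alpha)$ under the decomposition) produce the filtration $N_{\alpha,i}$ with $V(N_{\alpha,i})=N_{\alpha,\lfloor i/2\rfloor}$, i.e.\ the final type $[0,1,1,2,\ldots,\lfloor c_\alpha/2\rfloor]$ independently of the coefficients of $f_\alpha$.
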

 
Theorem \ref{Tintro} is stated without proof in \cite[3.2]{V:cycles}
in the special case that $f(x) \in k[x]$, i.e., $r=0$.

There are two interesting things about Theorem \ref{Tintro}. 
First, it shows that the Ekedahl-Oort type of $X:y^2-y=f(x)$ depends only on the orders of the poles of $f(x)$.
This is in sharp contrast with the case of hyperelliptic curves in odd characteristic, where even the 
$p$-rank depends on $f(x)$ and the location of the branch points, \cite{Y}.
Similarly, it is in contrast with the results of \cite{B}, \cite{El:bound}, \cite{John},
all of which give bounds for the $p$-rank and $a$-number of various kinds of curves
that depend strongly on the coefficients of $f(x)$. 
Likewise, preliminary calculations indicate that it is in contrast 
with the situation for Artin-Schreier curves in odd characteristic.

Secondly, Theorem \ref{Tintro} is interesting because it shows that most of the possibilities 
for the $2$-torsion group scheme do not occur for Jacobians of hyperelliptic $k$-curves when ${\rm char}(k)=2$.
Specifically, there are $2^g$ possibilities for the $2$-torsion group scheme of a $g$-dimensional abelian variety over $k$. 
We determine a subset of these, having cardinality equal to the number of partitions of $g+1$,  
and prove that the group schemes in this subset are exactly those which occur as the $2$-torsion 
$J_X[2]$ for a hyperelliptic $k$-curve $X$ of genus $g$.
Also, Theorem \ref{Tintro} gives the non-trivial bounds $(g-r)/2 \leq a_X \leq (g+1)/2$ for the $a$-number. 

An earlier non-existence result of this type can be found in \cite{Ekedahl}, where the author proved 
that a curve $X$ of genus $g > p(p-1)/2$ in characteristic $p > 0$ cannot be superspecial, and thus $a_X < g$.
There are also other recent results about Newton polygons of hyperelliptic (i.e., Artin Schreier) 
curves in characteristic $2$, including several non-existence results, \cite{SZ:02}, \cite{blache08}, \cite{blache09}, 
and counting results for curves of genus $3$ over finite fields, \cite{NS:hyper}.

Here is an outline of this paper: Section \ref{Snot} contains notation and background.  
Results on $\HH^0(X, \Omega^1)$ and the $a$-number 
are in Section \ref{Sanumber}.  Theorem \ref{Tmainresult} is with the material on the de Rham cohomology in Section \ref{Sderham}.
Section \ref{Sreseotype} contains the results about the Ekedahl-Oort type, including Theorem \ref{Tintro}.

\section{Notation} \label{Snot}

In this paper, all objects are defined over an algebraically closed field $k$ of characteristic $p > 0$ and
all curves are smooth, projective, and connected.

\subsection{The $p$-torsion group scheme}

Suppose $A$ is a principally polarized abelian variety of dimension $g$ defined over $k$.
For example, $A$ could be the Jacobian of a $k$-curve of genus $g$.
Consider the multiplication-by-$p$ morphism $[p]:A \to A$ which is a finite flat morphism of degree $p^{2g}$.
It factors as $[p]=V \circ F$.  Here $F:A \to A^{(p)}$ is the relative Frobenius morphism 
coming from the $p$-power map on the structure sheaf; it is purely inseparable of degree $p^g$.  
The Verschiebung morphism 
$V:A^{(p)} \to A$ is the dual of $F$.  

The kernel of $[p]$ is $A[p]$, the $p$-torsion of $A$, which is a quasi-polarized $BT_1$ group scheme.  
In other words, it is a quasi-polarized finite commutative group scheme annihilated by $p$, again having 
morphisms $F$ and $V$.
The rank of $A[p]$ is $p^{2g}$.
The quasi-polarization implies that $A[p]$ is symmetric.
These group schemes were classified independently by
Kraft (unpublished) \cite{Kraft} and by Oort \cite{O:strat}.
A complete description of this topic can be found in \cite{O:strat} or \cite{M:group}.

Two invariants of (the $p$-torsion of) an abelian variety are the $p$-rank and $a$-number.
The {\it $p$-rank} of $A$ is $r=\dime_{\FF_p} \Hom(\mu_p, A[p])$
where $\mu_p$ is the kernel of Frobenius on $\GG_m$.
Then $p^r$ is the cardinality of $A[p](k)$.
The {\it $a$-number} of $A$ is $a=\dime_k \Hom(\alpha_p, A[p])$ 
where $\alpha_p$ is the kernel of Frobenius on $\GG_a$.
It is well-known that $0 \leq f \leq g$ and $1 \leq a +f \leq g$.

One can describe the group scheme $A[p]$ using the theory of covariant Dieu\-donn\'e modules.
This is the dual of the contravariant theory found in \cite{Demazure}; see also \cite[A.5]{G:book}.
Briefly, 
consider the non-commutative ring $\bE=k[F,V]$ generated by semi-linear operators $F$ and $V$ with the relations 
$FV=VF=0$ and $F \lambda = \lambda^p F$ and $\lambda V=V \lambda^p$ for all $\lambda \in k$.
Let $\bE(A,B)$ denote the left ideal $\bE A+\bE B$ of $\bE$ generated by $A$ and $B$.
A deep result is that the Dieudonn\'e functor $D$ gives an equivalence of categories between $BT_1$ group schemes 
${\mathbb G}$ (with rank $p^{2g}$) and finite left $\bE$-modules $D({\mathbb G})$
(having dimension $2g$ as a $k$-vector space).
For example, the Dieudonn\'e module of an ordinary elliptic curve is 
$D(\ZZ/p \oplus \mu_p) \simeq \bE/\bE(F, 1-V) 
\oplus \bE/\bE(V, 1-F)$, \cite[Ex.\ A.5.1 \& 5.3]{G:book}.

The $p$-rank of $A[p]$ is the dimension of $V^g D({\mathbb G})$.
The $a$-number of $A[p]$ equals $g-{\rm dim}(V^2D({\mathbb G}))$ \cite[5.2.8]{LO}.

\subsection{The Ekedahl-Oort type} \label{Seotype}

The isomorphism type of a symmetric $BT_1$ group scheme ${\mathbb G}$ over $k$
can be encapsulated into combinatorial data.
This topic can be found in \cite{O:strat}.
If ${\mathbb G}$ has rank $p^{2g}$, 
then there is a {\it final filtration} $N_1 \subset N_2 \subset \cdots \subset N_{2g}$ 
of $D({\mathbb G})$ as a $k$-vector space
which is stable under the action of $V$ and $F^{-1}$ such that $i={\rm dim}(N_i)$.
If $w$ is a word in $V$ and $F^{-1}$, then $wD({\mathbb G})$ is an object in the filtration.
In particular, $N_g = V D({\mathbb G}) =F^{-1}(0)$.
If ${\mathbb G}$ is quasi-polarized, then there is a sympletic form on $D({\mathbb G})$
and $N_{2g-i}$ and $N_i$ are orthogonal under the symplectic pairing.

The {\it Ekedahl-Oort type} of ${\mathbb G}$, also called the {\it final type},
is $\nu=[\nu_1, \ldots, \nu_r]$ where ${\nu_i}={\rm dim}(V(N_i))$.
The Ekedahl-Oort type of ${\mathbb G}$ is canonical, even if the final filtration is not.
There is a restriction $\nu_i \leq \nu_{i+1} \leq \nu_i +1$ on the final type.
All sequences satisfying this restriction occur.   
This implies that there are $2^g$ isomorphism classes of symmetric $BT_1$ group schemes of rank $p^{2g}$.    
The $p$-rank is ${\rm max}\set{i \mid \nu_i=i}$ and the $a$-number equals $g-\nu_g$.

\subsection{The de Rham cohomology} \label{Sdefderham}

Suppose $X$ is a $k$-curve of genus $g$
and recall the definition of the non-commutative ring $\bE=k[F,V]$ from Section \ref{Sdefderham}.
By \cite[Section 5]{Oda}, 
there is an isomorphism of $\bE$-modules between the Dieudonn\'e module 
$D(J_X[p])$ and the de Rham cohomology $\HdR(X)$.  
In particular, $\ker(F) = \HH^0(X, \Omega^1)={\rm im}(V)$.
Recall that $\dim_k \HdR(X) = 2g$.

In \cite[Section 5]{Oda}, there is the following description of $\HdR(X)$. 
Let $\cU = \set{U_i}$ be a covering of $X$ by affine open subvarieties and 
let $U_{ij} := U_i \cap U_j$ and $U_{ijk} := U_i \cap U_j \cap U_k$.
For a sheaf $\cF$ on $X$, let
\begin{eqnarray*}
\CC^0(\cU, \cF) &:=& \set{\kappa = (\kappa_i)_i \mid \kappa_i\in \Gamma(U_i, \cF)},\\
\CC^1(\cU, \cF) &:=& \set{\phi = (\phi_{ij})_{i<j} \mid \phi_{ij}\in \Gamma(U_{ij}, \cF)},\\
\CC^2(\cU, \cF) &:=& \set{\psi = (\psi_{ijk})_{i<j<k} \mid \psi_{ijk}\in \Gamma(U_{ijk}, \cF)}.
\end{eqnarray*}
For convenience, let $\phi_{ii} := 0$ for any $\phi\in \CC^1(\cU, \cF)$.
There are coboundary operators $\delta: \CC^0(\cU, \cF) \to \CC^1(\cU, \cF)$ defined by
$(\delta \kappa)_{i<j} = \kappa_i - \kappa_j$; and $\delta: \CC^1(\cU, \cF) \to \CC^2(\cU, \cF)$ by
$(\delta \phi)_{i<j<k} = \phi_{ij} - \phi_{ik} + \phi_{jk}$.
All other maps are applied to $\CC^m(\cU, \cF)$ elementwise, e.g., $(F\phi)_i := F\phi_i$.

The de Rham cocycles are defined by
$$
\ZdR(\cU) := \set{(\phi, \omega)\in \CC^1(\cU, \cO) \times \CC^0(\cU, \Omega^1) \mid
\delta \phi=0,\ d\phi = \delta\omega},
$$
that is, $\phi_{ij}-\phi_{ik} + \phi_{jk} = 0$ and $d\phi_{ij} = \omega_i - \omega_j$ 
for all indices $i < j < k$.  
The de Rham coboundaries are defined by
\begin{equation*}
\BdR(\cU) := \set{(\delta \kappa, d\kappa)\in \ZdR(\cU) \mid \kappa\in C^0(\cU, \cO)}.
\end{equation*}
Finally,
$$
\HdR(X) \cong \HdR(\cU) := \ZdR(\cU) / \BdR(\cU).
$$


There is an injective homomorphism $\lambda:\HH^0(X, \Omega^1) \to \HdR(X)$
denoted informally by $\omega \mapsto (0, \omega)$ where the second coordinate 
is defined by $\omega_i=\omega \vert_{U_i}$.
This map is well-defined since $d(0)= \omega \vert_{U_i} - \omega \vert_{U_j}= (\delta \omega)_{i<j}$.
It is injective because, if $(0, \omega_1) \equiv (0, \omega_2) \bmod{\BdR(\cU)}$,
then $\omega_1 - \omega_2 = d\kappa$ where $\kappa \in C^0(\cU, \cO)$ is such that 
$\delta \kappa=0$;
thus $\kappa\in \CH0(\cU, \cO) \simeq k$ is a constant function on $X$ and so $\omega_1 - \omega_2 = 0$.

There is another homomorphism $\gamma: \HdR(X) \to \CH^1(X, \cO)$ sending
the cohomology class of $(\phi,\omega)$ to the cohomology class of $\phi$. 
The choice of cocycle $(\phi, \omega)$ does not matter, since 
the coboundary conditions on $\HdR(X)$ and $\CH^1(X, \cO)$ are compatible.
The homomorphisms $\lambda$ and $\gamma$ fit into a short exact sequence
\begin{equation*}
0 \to \HH^0(X, \Omega^1) \xrightarrow{\lambda} \HdR(X)
\xrightarrow{\gamma} \HH^1(X,\cO) \to 0.
\end{equation*}
In Subsections \ref{sub:auxiliary} and \ref{sub:derham}, 
we construct a suitable section $\sigma: \HH^1(X, \cO)\to \HdR(X)$ of $\gamma$
when $X$ is a hyperelliptic $k$-curve with ${\rm char}(k)=2$.

\subsection{Frobenius and Verschiebung}
\label{sub:fv}

The Frobenius and Verschiebung operators $F$ and $V$ act on $\HdR(X)$ as follows:
$$
F(f, \omega) := (f^p, 0)
\quad\text{and}\quad
V(f, \omega) := (0, \cC(\omega)),
$$
where $\cC$ is the Cartier operator \cite{Cartier} on the sheaf $\Omega^1$.
The operator $F$ is $p$-linear and $V$ is $p^{-1}$-linear.

The three principal properties of the Cartier operator are that
it annihilates exact differentials, preserves logarithmic ones, and is $p^{-1}$-linear.
The Cartier operator can be computed as follows.
Let $x\in k(X)$ be an element which forms a $p$-basis of $k(X)$ over $k(X)^p$,
i.e., such that every $z\in k(X)$ can be written as
$$
z := z_0^p + z_1^p x + \cdots + z_{p-1}^p x^{p-1}
$$
for uniquely determined $z_0, \ldots, z_{p-1} \in k(X)$.
Then
$$
\cC(z \dx/x) := z_{0} \dx/x.
$$

\section{Results about holomorphic $1$-forms and the $a$-number} \label{Sanumber}

\subsection{Equations}\label{Sequations}

We specialize to the case that 
$k$ is an algebraically closed field of characteristic $p=2$.
Let $X$ be a $k$-curve of genus $g$ which is hyperelliptic, in other words, for which 
there exists a degree two cover $\cov: X \to \bP^1$.

Let $B \subset \bP^1(k)$ denote the set of branch points of $\cov$.
After a fractional linear transformation, one can suppose that $0\in B$ and $\infty \notin B$.
For $\alpha \in B$, let $P_\alpha := \cov^{-1}(\alpha) \in X(k)$ be the ramification point above $\alpha$. 
Also consider the divisor $D_\infty := \pi^{-1}(\infty)$ on $X$.
Let $B_\infty := B\cup\set\infty$, $B':=B-\set{0}$ and $r := \#B - 1$. 

For $\alpha \in B$, let $x_\alpha=(x-\alpha)^{-1}$
and let $d_\alpha$ be the {\it ramification invariant} at $\alpha$, namely
the largest integer for which the higher ramification group of $\cov$ above $\alpha$ is non-trivial.
By \cite[Prop.\ III.7.8]{sti}, $d_\alpha$ is odd. 
Let $c_\alpha:=(d_\alpha-1)/2$.

The cover $\cov$ is given by an affine equation of the form
\begin{equation*}
y^2-y=f(x)
\end{equation*}
for some non-constant rational function $f(x) \in k(x)$.
After a change of variables of the form $y \mapsto y+\epsilon$, 
one can suppose that the partial fraction decomposition of $f(x)$ has the form:
\begin{equation*}
f(x)=\sum_{\alpha\in B} f_{\alpha} \big(x_\alpha\big),
\end{equation*}
where $f_\alpha(x) \in xk[x^2]$ is a polynomial of degree $d_\alpha$ containing no monomials of even exponent.
In particular, the divisor of poles of $f(x)$ on $\bP^1$ has the form
$$
{\di}_\infty(f(x)) = \sum_{\alpha\in B} d_{\alpha} \alpha.
$$

By the Riemann-Hurwitz formula \cite[IV, Prop.\ 4]{Se:lf}, the genus of $X$ satisfies 
$$
2g+2=\sum_{\alpha\in B} (d_\alpha + 1).
$$
By the Deuring-Shafarevich formula \cite[Cor.\ 1.8]{Crew}, the $p$-rank of $X$ is $r$.
Note that $g=r+\sum_{\alpha \in B} c_\alpha$.
The implication of these formulae is that, 
for a given genus $g$ (and $p$-rank $r$), there is an additional discrete invariant of $X$, 
namely a partition of $2g+2$ into ($r+1$) even integers $d_\alpha + 1$.
In Section \ref{Sfinalfil}, we show that the Ekedahl-Oort type of $X$ depends only on this discrete invariant.

\subsection{The space $\HH^0(X, \Omega^1)$} \label{SHzero}

For an integer $j$ and for $\alpha \in B$, consider the following $1$-forms on $X$: 
$$
\omega_{\alpha,j} := x_\alpha^{j-1} \dx_\alpha.
$$
Note that $\omega_{\alpha,j} = - (x-\alpha)^{-j-1} dx$
and, if $\alpha \in B'$, then $\omega_{\alpha,0}-\omega_{0,0}=-\alpha dx/x(x-\alpha)$.

The following lemma is a variation of a special case of \cite[Lemma 1(c)]{Sullivan}. 

\begin{lemma}\label{L1formbasis}
A basis for $\HH^0(X, \Omega^1)$ is given by the $1$-forms 
$\omega_{\alpha,j}$ for $\alpha \in B$ and $1 \leq j \leq c_\alpha$
and $\omega_{\alpha, 0} - \omega_{0,0}$ for $\alpha\in B'$.
\end{lemma}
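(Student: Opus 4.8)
The plan is to show that the $g$ listed forms lie in $\HH^0(X,\Omega^1)$ and are linearly independent over $k$; since $\dim_k\HH^0(X,\Omega^1)=g$, this forces them to be a basis and makes a separate spanning argument unnecessary. First I would count: there are $\sum_{\alpha\in B}c_\alpha$ forms $\omega_{\alpha,j}$ together with $\#B'=r$ forms $\omega_{\alpha,0}-\omega_{0,0}$, for a total of $r+\sum_{\alpha\in B}c_\alpha=g$, matching the dimension. The organizing observation is that every form in the list has the shape $h(x)\dx$ with $h\in k(x)$ (equivalently, each is invariant under the hyperelliptic involution), so all divisor computations can be carried out on $\bP^1$ once the ramification data of $\cov$ is recorded.

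To prove holomorphy I would first compute $\di_X(\dx)$. Since $\cov$ is an Artin--Schreier cover with unique ramification break $d_\alpha$ above each $\alpha\in B$, the different has valuation $(p-1)(d_\alpha+1)=d_\alpha+1$ at $P_\alpha$; and since $\infty\notin B$ and $k=\bar k$, the fibre $\cov^{-1}(\infty)=D_\infty$ consists of two unramified points, on each of which $x$ has a simple pole. This yields $\di_X(\dx)=\sum_{\alpha\in B}(d_\alpha+1)P_\alpha-2D_\infty$. For $\omega_{\alpha,j}=-(x-\alpha)^{-j-1}\dx$ the order at $P_\alpha$ is then $d_\alpha-1-2j=2(c_\alpha-j)$, which is nonnegative exactly when $j\le c_\alpha$; the order at each point of $D_\infty$ is $j-1$, nonnegative exactly when $j\ge 1$; and the order at the remaining $P_\beta$ is $d_\beta+1\ge 0$. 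Hence $\omega_{\alpha,j}$ is holomorphic precisely in the stated range $1\le j\le c_\alpha$. For the difference forms I would use $\omega_{\alpha,0}-\omega_{0,0}=-\alpha\dx/\bigl(x(x-\alpha)\bigr)$: here the simple poles of the two summands at the points of $D_\infty$ cancel, while the orders at $P_0$ and $P_\alpha$ are $2c_0$ and $2c_\alpha$, both nonnegative, so the difference is holomorphic even though neither $\omega_{\alpha,0}$ nor $\omega_{0,0}$ is.

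For linear independence I would note that $k(X)/k(x)$ is separable (it is Artin--Schreier), so $\dx$ generates $\Omega^1_{k(X)/k}$ and is nonzero; thus $h\mapsto h\dx$ is injective, and it suffices to prove the associated rational functions are independent in $k(x)$. These functions are $-(x-\alpha)^{-j-1}$ for $1\le j\le c_\alpha$ and $x^{-1}-(x-\alpha)^{-1}$ for $\alpha\in B'$, all expressed through the standard independent partial-fraction elements $(x-\beta)^{-m}$ with $m\ge 1$. A vanishing combination must have vanishing principal part at every pole: comparing principal parts at each $\alpha\in B'$ first forces the coefficients $a_{\alpha,j}$ and $b_\alpha$ to vanish, after which comparing the principal part at $0$ forces the $a_{0,j}$ to vanish.

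The main obstacle is the holomorphy step, and within it the correct evaluation of $\di_X(\dx)$. This rests on the characteristic-$2$ wild-ramification input that the different exponent at $P_\alpha$ equals $d_\alpha+1$ (so that $\omega_{\alpha,c_\alpha}$ is holomorphic but $\omega_{\alpha,c_\alpha+1}$ would not be), and on the pole cancellation at the two points over $\infty$, which is exactly what rescues the otherwise non-holomorphic forms $\omega_{\alpha,0}$ and explains why one takes differences rather than the individual forms. Once $\di_X(\dx)$ is established, the remaining verifications are routine bookkeeping with valuations and with partial-fraction independence.
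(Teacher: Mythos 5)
Your proposal is correct and takes essentially the same route as the paper: both verify holomorphy by computing the divisor of each listed form on $X$ (your derivation of $\di(dx)$ from the different exponent $d_\alpha+1$ recovers exactly the paper's divisor formulas for $dx_\alpha$, $\omega_{\alpha,j}$, and $\omega_{\alpha,0}-\omega_{0,0}$) and then conclude by counting to $g$. The only divergence is the linear-independence step, where your reduction to partial-fraction independence of the rational functions $h(x)$ with $\omega=h(x)\,dx$ is, if anything, more transparent than the paper's terse appeal to $\ZZ$-independence of the corresponding divisors.
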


\begin{proof}
For $\alpha \in B$, one can calculate the following divisors on $X$: $\di(x_\alpha)=D_\infty - 2P_\alpha$ and 
\begin{equation}\label{eqn:divdxalpha}
\di(dx_\alpha)= (d_\alpha-3) P_\alpha + \sum_{\beta \in B-\set\alpha}(d_\beta +1)P_\beta
\end{equation}
and
\begin{equation}\label{eqn:divomega}
\di(\omega_{\alpha,j})=
2(c_\alpha-j)P_\alpha + (j-1) D_\infty + \sum_{\beta \in B-\set{\alpha}}(d_\beta +1)P_\beta.
\end{equation}
Thus $\omega_{\alpha, j}$ is holomorphic for $1 \leq j \leq c_\alpha$.
Also, $\omega_{\alpha,0}-\omega_{0,0}$ is holomorphic for $\alpha \in B'$ because
\begin{eqnarray*}
\di(\omega_{\alpha, 0}-\omega_{0,0})=2c_\alpha P_\alpha + 2c_0 P_0 +\sum_{\beta \in B-\set{0,\alpha}}(d_\beta +1)P_\beta.
\end{eqnarray*}
This set of holomorphic differentials of $X$ is linearly independent because 
the corresponding set of divisors is linearly independent over $\ZZ$.
It forms a basis since the set has cardinality $r + \sum_{\alpha \in B} c_\alpha=g$.
\end{proof}

\begin{lemma}\label{Lcartier1}
If $\alpha\in B$, then
$$
\cC(\omega_{\alpha, j}) =
\begin{cases}
\omega_{\alpha, j/2} & \text{if $j$ is even},\\
0 & \text{if $j$ is odd}.
\end{cases}
$$
In particular, $\cC(\omega_{\alpha,0} - \omega_{0,0}) = \omega_{\alpha,0} - \omega_{0,0}$
for all $\alpha\in B'$.
\end{lemma}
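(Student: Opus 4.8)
The plan is to compute $\cC$ directly from the explicit $p$-basis formula given in Subsection~\ref{sub:fv}, exploiting the fact that each $\omega_{\alpha,j}$ is a rational function of $x$ times $\dx$ and hence is pulled back from the $x$-line. Although these $1$-forms live on the genus-$g$ curve $X$, the Cartier operator is canonical and may be evaluated using any $p$-basis of $k(X)$ over $k(X)^2$; I would take the $p$-basis to be $x_\alpha$ itself. Since $k(x_\alpha)=k(x)$ and the cover $\cov\colon X\to\bP^1$ is separable (being Artin-Schreier), $x_\alpha$ is not a square in $k(X)$, so it is a legitimate $p$-basis. Moreover, for $z\in k(x)$ the expansion $z=z_0^2+z_1^2x_\alpha$ can already be performed inside $k(x)=k(x_\alpha)$, and by uniqueness of the $p$-basis expansion in $k(X)$ these coefficients agree with the ones in $k(X)$. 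Thus computing $\cC(\omega_{\alpha,j})$ reduces to a computation on $\bP^1$ in the variable $x_\alpha$.

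For the main case I would write $\omega_{\alpha,j}=x_\alpha^{j-1}\dx_\alpha = x_\alpha^{\,j}\,\dx_\alpha/x_\alpha$, so that in the notation $\cC(z\,\dx_\alpha/x_\alpha)=z_0\,\dx_\alpha/x_\alpha$ one has $z=x_\alpha^{\,j}$. If $j=2m$ is even, then $z=(x_\alpha^{m})^2$, so $z_0=x_\alpha^{m}$, $z_1=0$, and hence $\cC(\omega_{\alpha,j})=x_\alpha^{m-1}\,\dx_\alpha=\omega_{\alpha,j/2}$. If $j=2m+1$ is odd, then $z=(x_\alpha^{m})^2x_\alpha$, so $z_0=0$ and $\cC(\omega_{\alpha,j})=0$. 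This is precisely the asserted dichotomy.

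For the final statement I would note that the same parity computation applies formally with $j=0$: since $x_\alpha^{0}=1=1^2$, it gives $z_0=1$ and $\cC(\omega_{\alpha,0})=\omega_{\alpha,0}$, and likewise $\cC(\omega_{0,0})=\omega_{0,0}$. Equivalently, $\omega_{\alpha,0}=-(x-\alpha)^{-1}\dx=-\dx/(x-\alpha)$ is a logarithmic differential, which the Cartier operator fixes. In either case, additivity of $\cC$ yields $\cC(\omega_{\alpha,0}-\omega_{0,0})=\omega_{\alpha,0}-\omega_{0,0}$; here one should remark that although $\omega_{\alpha,0}$ and $\omega_{0,0}$ are not individually holomorphic, $\cC$ is defined on all rational differentials, so applying it termwise is legitimate.

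The one genuinely delicate point is the reduction in the first paragraph: one must check that evaluating the Cartier operator of these forms \emph{on $X$} agrees with the naive computation on $\bP^1$. I would justify this either through the uniqueness of the $p$-basis expansion, as above, or, more conceptually, by invoking the compatibility $\cC_X\circ\cov^*=\cov^*\circ\cC_{\bP^1}$ of the Cartier operator with pullback along the separable cover $\cov$, together with the observation that each $\omega_{\alpha,j}$ is $\cov^*$ of the corresponding form on $\bP^1$. Once this is settled, the lemma is a one-line parity check.
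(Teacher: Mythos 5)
Your proof is correct and follows essentially the same route as the paper's: both write $\omega_{\alpha,j}=x_\alpha^{j}\,\dx_\alpha/x_\alpha$ and reduce the claim to a parity check on $j$ using the description of $\cC$ from Subsection \ref{sub:fv} (the paper phrases it via $p^{-1}$-linearity together with $\cC(dx_\alpha)=0$ and $\cC(dx_\alpha/x_\alpha)=dx_\alpha/x_\alpha$, you via the explicit $z_0$-coefficient formula, which is the same computation). Your extra paragraph verifying that $x_\alpha$ is a $p$-basis of $k(X)$ over $k(X)^2$ and that the computation may be carried out in $k(x)$ is a point the paper leaves implicit, but it does not change the argument.
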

\begin{proof}
Using the properties of the Cartier operator found in Section \ref{sub:fv}, 
one computes when $j$ is even that
$$\cC\left(x_\alpha^{j-1}\dx_\alpha\right) = 
x_\alpha^{j/2} \cC\left( dx_\alpha / x_\alpha \right)=
x_\alpha^{j/2-1}\dx_\alpha,$$
and when $j$ is odd that 
$$\cC\left(x_\alpha^{j-1}\dx_\alpha\right) = x_\alpha^{(j-1)/2} \cC(dx_\alpha) =0.$$
\end{proof}

For $\alpha \in B'$, let $W'_{\alpha,\sss} := \langle \omega_{\alpha, 0}-\omega_{0,0}\rangle,$
and for $\alpha \in B$, let $W'_{\alpha,\nil} := \langle \omega_{\alpha, j} \mid 1\leq j \leq c_\alpha \rangle$, 
where $\langle \cdot \rangle$ denotes the $k$-span.
These subspaces are invariant under the Cartier operator by Lemma \ref{Lcartier1}.

\begin{lemma}\label{L1form}
The subspaces $W'_{\alpha,\sss}$ and $W'_{\alpha,\nil}$ of $\HH^0(X, \Omega^1)$ 
are stable under the action of Verschiebung for each $\alpha \in B$.
There is an isomorphism of $V$-modules:
$$\HH^0(X, \Omega^1) \simeq
\bigoplus_{\alpha\in B'} W'_{\alpha,\sss} \oplus \bigoplus_{\alpha\in B} W'_{\alpha,\nil}.$$
\end{lemma}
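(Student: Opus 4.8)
The plan is to recall that Verschiebung acts on $\HH^0(X, \Omega^1)$ via the Cartier operator $\cC$ (this identification is in Subsection \ref{sub:fv}, where $V(0,\omega)=(0,\cC(\omega))$ and $\ker(F)=\HH^0(X,\Omega^1)={\rm im}(V)$). So the entire statement reduces to a statement about $\cC$, and the heavy lifting has already been done in Lemma \ref{Lcartier1}. First I would observe that Lemma \ref{L1formbasis} gives a basis of $\HH^0(X, \Omega^1)$ consisting of the forms $\omega_{\alpha,j}$ for $1 \leq j \leq c_\alpha$ together with the forms $\omega_{\alpha,0}-\omega_{0,0}$ for $\alpha \in B'$, and that by construction the subspaces $W'_{\alpha,\sss}$ and $W'_{\alpha,\nil}$ are exactly the spans of these basis vectors grouped by $\alpha$. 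Hence the direct sum decomposition as $k$-vector spaces is immediate from the fact that the union of these bases is a basis: every basis vector lies in exactly one summand, and the total dimension $r + \sum_{\alpha \in B} c_\alpha = g$ matches $\dim_k \HH^0(X, \Omega^1)$.

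The next step is to check that each summand is stable under $V$, i.e. under $\cC$. For $W'_{\alpha,\nil} = \langle \omega_{\alpha,j} \mid 1 \leq j \leq c_\alpha\rangle$, Lemma \ref{Lcartier1} says $\cC(\omega_{\alpha,j})$ is either $0$ (when $j$ is odd) or $\omega_{\alpha,j/2}$ (when $j$ is even); in the latter case $1 \leq j/2 \leq c_\alpha/2 < c_\alpha$, so the image stays inside $W'_{\alpha,\nil}$. For $W'_{\alpha,\sss}=\langle \omega_{\alpha,0}-\omega_{0,0}\rangle$, the second statement of Lemma \ref{Lcartier1} gives $\cC(\omega_{\alpha,0}-\omega_{0,0})=\omega_{\alpha,0}-\omega_{0,0}$, so this line is fixed pointwise and is trivially $\cC$-stable. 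This already shows each $W'_{\alpha,\sss}$ and $W'_{\alpha,\nil}$ is a $V$-submodule, and the remark preceding the lemma flags exactly this point.

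Finally I would conclude that the vector-space decomposition is in fact a decomposition of $V$-modules. Since each summand is $V$-stable and the summands are spanned by disjoint pieces of a single basis, the action of $V$ on $\HH^0(X, \Omega^1)$ is block-diagonal with respect to the decomposition, which is precisely the assertion that the direct sum is an isomorphism of $V$-modules. There is no real obstacle here: the content has been front-loaded into Lemmas \ref{L1formbasis} and \ref{Lcartier1}, and this lemma is essentially the bookkeeping step that repackages those two results into the module-theoretic statement needed downstream. The only point requiring a moment's care is confirming the index bound $j/2 \leq c_\alpha$ keeps $\cC(\omega_{\alpha,j})$ inside $W'_{\alpha,\nil}$ rather than spilling into a holomorphic form outside the chosen basis, but this is immediate from $j \leq c_\alpha$.
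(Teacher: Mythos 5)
Your proposal is correct and follows exactly the paper's argument: the paper's proof is the one-line remark that the lemma follows immediately from Lemmas \ref{L1formbasis} and \ref{Lcartier1}, which is precisely the bookkeeping you spell out. The only addition you make is the explicit check that $j/2 \leq c_\alpha$, which is a harmless elaboration of the same reasoning.
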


\begin{proof}
This follows immediately from Lemmas \ref{L1formbasis} and \ref{Lcartier1}.
\end{proof}

\subsection{Application: the $a$-number}

\begin{proposition} \label{Panumber}
Suppose $X$ is a hyperelliptic $k$-curve with affine equation $y^2-y=f(x)$ as described in Notation \ref{Nsetup}.
If ${\rm div}_\infty(f(x))=\sum_{\alpha \in B} d_\alpha \alpha$ is the divisor of poles of $f(x)$ on $\bP^1$, 
then the $a$-number of $X$ is 
$$
a_X= \frac{g+1-\#\set{\alpha\in B \mid d_\alpha \equiv 1 \bmod 4}}{2}.
$$
\end{proposition}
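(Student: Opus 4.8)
The plan is to reduce the computation of $a_X$ to the dimension of the kernel of the Cartier operator on $\HH^0(X,\Omega^1)$, and then to read off that kernel from the decomposition already established in Lemma \ref{L1form}. Recall from Section \ref{Snot} that $a_X = g - \dime_k\bigl(V^2 D(J_X[p])\bigr)$, and that under the identification $D(J_X[p]) \cong \HdR(X)$ one has ${\rm im}(V) = \HH^0(X,\Omega^1)$ with $V$ acting on this image as the Cartier operator $\cC$. Hence $V^2 D(J_X[p]) = \cC\bigl(\HH^0(X,\Omega^1)\bigr)$, so that $\dime_k V^2 D(J_X[p]) = {\rm rank}\bigl(\cC\vert_{\HH^0(X,\Omega^1)}\bigr)$ and therefore
$$
a_X = g - {\rm rank}\bigl(\cC\vert_{\HH^0(X,\Omega^1)}\bigr) = \dime_k \ker\bigl(\cC\vert_{\HH^0(X,\Omega^1)}\bigr).
$$

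First I would invoke the $V$-module decomposition $\HH^0(X,\Omega^1) \simeq \bigoplus_{\alpha\in B'} W'_{\alpha,\sss} \oplus \bigoplus_{\alpha\in B} W'_{\alpha,\nil}$ of Lemma \ref{L1form}, so that $\ker(\cC)$ splits as a direct sum of the kernels on the summands. On each line $W'_{\alpha,\sss}$ the operator $\cC$ acts as the identity by Lemma \ref{Lcartier1} and contributes nothing. On $W'_{\alpha,\nil} = \langle \omega_{\alpha,j} \mid 1\leq j\leq c_\alpha\rangle$, Lemma \ref{Lcartier1} gives $\cC(\omega_{\alpha,j}) = \omega_{\alpha,j/2}$ for even $j$ and $\cC(\omega_{\alpha,j}) = 0$ for odd $j$; since the forms $\omega_{\alpha,j/2}$ obtained from even $j$ are distinct basis vectors, the kernel is spanned exactly by the $\omega_{\alpha,j}$ with $j$ odd. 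Thus $\dime_k \ker\bigl(\cC\vert_{W'_{\alpha,\nil}}\bigr) = \#\set{j \text{ odd} \mid 1\le j\le c_\alpha} = \lceil c_\alpha/2\rceil$.

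Summing over $\alpha$ gives $a_X = \sum_{\alpha\in B}\lceil c_\alpha/2\rceil$, and it remains to match this with the asserted formula by a parity count. Writing $c_\alpha = (d_\alpha-1)/2$, one has $\lceil c_\alpha/2\rceil = c_\alpha/2$ when $d_\alpha\equiv 1 \bmod 4$ (so $c_\alpha$ is even) and $\lceil c_\alpha/2\rceil = (c_\alpha+1)/2$ when $d_\alpha\equiv 3\bmod 4$ (so $c_\alpha$ is odd). Setting $S = \#\set{\alpha\in B \mid d_\alpha\equiv 1\bmod 4}$ and $T = \#\set{\alpha\in B \mid d_\alpha\equiv 3\bmod 4}$, this gives $a_X = \tfrac12\sum_{\alpha\in B}c_\alpha + \tfrac12 T$. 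Substituting $\sum_{\alpha\in B}c_\alpha = g-r$ and $r = \#B-1 = S+T-1$ collapses the expression to $(g+1-S)/2$, which is the claim. Given the preparatory lemmas I expect no serious obstacle; the only points needing care are the opening reduction---correctly identifying $V^2$ on the Dieudonn\'e module with $\cC$ applied twice to holomorphic forms---and the final parity bookkeeping, which must assemble the count of odd indices on each piece into the global count of branch points with $d_\alpha\equiv 1\bmod 4$.
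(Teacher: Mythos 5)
Your proposal is correct and follows essentially the same route as the paper: reduce $a_X$ to the dimension of the kernel of the Cartier operator on $\HH^0(X,\Omega^1)$ via $a_X = g - \dime_k(V^2D(J_X[2]))$, read off that kernel from Lemma \ref{Lcartier1} as the span of the $\omega_{\alpha,j}$ with $j$ odd, and finish with the same parity count (your bookkeeping via $\lceil c_\alpha/2\rceil$ and $r=S+T-1$ is just a repackaging of the paper's computation using $g+1=\sum_{\alpha\in B}(d_\alpha+1)/2$).
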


\begin{proof}
The $a$-number of $\mathbb G=J_X[2]$ is $a_X=g-{\rm dim}(V^2D({\mathbb G}))$ \cite[5.2.8]{LO}.
The action of $V$ on $VD(\mathbb G)$ is the same as the action of the 
Cartier operator $\cC$ on $\HH^0(X, \Omega^1)$.
So $a_X$ equals the dimension of the kernel of $\cC$ on $\HH^0(X, \Omega^1)$.
By Lemma \ref{Lcartier1}, the kernel of $\cC$ on $\HH^0(X, \Omega^1)$ is spanned by
$\omega_{\alpha,j}$ for $\alpha \in B$ and $j$ odd with $1 \leq j \leq c_\alpha = (d_\alpha - 1)/2$.
Thus the contribution to the $a$-number from each $\alpha \in B$ is $\lfloor (d_\alpha + 1)/4 \rfloor$.
In other words, if $d_\alpha \equiv 1 \bmod 4$, the contribution is $(d_\alpha-1)/4$ and 
if $d_\alpha \equiv 3 \bmod 4$, the contribution is $(d_\alpha+1)/4$.
Since $g + 1 = \sum_{\alpha \in B} (d_\alpha + 1)/2$, this yields
$$
2a_X = (g+1) - \# \set{\alpha \in B \mid d_\alpha \equiv 1 \bmod 4 }.
$$
\end{proof}


\subsection{Examples with large $p$-rank}
Let $A$ be a principally polarized abelian variety over $k$ with dimension $g$ and $p$-rank $r$.
If $r=g$, then $A[p] \simeq (\ZZ/p \oplus \mu_p)^g$ and the $a$-number is $a=0$.
If $r=g-1$ then $A[p] \simeq (\ZZ/p \oplus \mu_p)^{g-1} \oplus E[p]$ where $E$ is a supersingular elliptic curve 
and the $a$-number is $a=1$.  
So the first case where $A[p]$ and $a$ are not determined by the $p$-rank is when $r=g-2$.

\begin{example} Let $p=2$ and $g \geq 2$.  
There are two possibilities for the $p$-torsion group scheme of a principally polarized abelian variety over $k$
with dimension $g$ and $p$-rank $g-2$.
Both of these occur as the $2$-torsion group scheme $J_X[2]$
of the Jacobian of a hyperelliptic $k$-curve $X$ of genus $g$.   
\end{example}

\begin{proof}
If $A$ is a principally polarized abelian variety over $k$ with dimension $g$ and $p$-rank $g-2$, then 
$A[p] \simeq (\mu_p \oplus \ZZ/p)^{g-2} \oplus {\mathbb G}$ where ${\mathbb G}$ is 
isomorphic to the $p$-torsion group scheme of an abelian surface $Z$ with $p$-rank $0$.
There are two possibilities for ${\mathbb G}$, according to whether $Z$ is superspecial or merely supersingular:
$(G_1)^2$, where $G_1$ denotes the $p$-torsion group scheme of a supersingular elliptic $k$-curve;
and $G_2$, which is isomorphic to the quotient of $(G_1)^2$ by a generic $\alpha_p$ subgroup scheme.

To prove the second claim, consider the two possibilities for a partition of $2g+2$ into $r+1=g-1$ even integers, namely 
(A) $\set{2,2, \ldots, 2,4,4}$ or (B) $\set{2,2, \ldots, 2,2,6}$.
In case (A), consider $f(x) \in k(x)$ with $g-1$ poles, such that $0$ and $1$ are poles of order $3$ 
and the other poles are simple.
In case (B), consider $f(x) \in k(x)$ with $g-1$ poles, such that $0$ is a pole of order $5$ and the other poles are simple.
The kernel of the Cartier operator on $\HH^0(X, \Omega^1)$ is spanned by $dx/x^2$ and $dx/(x-1)^2$ in case (A) 
and by $dx/x^2$ in case (B).
Thus the $a$-number equals $2$ in case (A) and equals $1$ in case (B).
In both cases, this completely determines the group scheme.
Namely, the group scheme $J_X[2]$ is isomorphic to 
$(\ZZ/2 \oplus \mu_2)^{g-2} \oplus (G_1)^2$ in case (A) 
and to $(\ZZ/2 \oplus \mu_2)^{g-2} \oplus G_2$ in case (B).
\end{proof}

For $g \geq 3$ and $r \leq g-3$, the action of $V$ on $\HH^0(X, \Omega^1)$ 
(and, in particular, the value of the $a$-number) is not sufficient
to determine the isomorphism class of the group scheme $J_X[2]$.
To determine this group scheme, in the next section we study the $\bE$-module structure of 
$\HH^1_{\rm dR}(X)$. 

\section{Results on the de Rham cohomology} \label{Sderham}
\subsection{An open covering}

Let $V'=\bP^1 - B_\infty$ and $U'=\cov^{-1}(V')=X-\cov^{-1}(B_\infty)$.
For $\alpha \in B_\infty$, let $V_\alpha=V' \cup \set{\alpha}$ and 
$U_\alpha=U' \cup \{\cov^{-1}(\alpha)\}$.
The collection $\cU := \set{U_\alpha \mid \alpha \in B_\infty}$ is a cover of $X$ by open affine subvarieties.
By construction, if $\alpha, \beta \in B_\infty$ are distinct, then
$V_{\alpha\beta}:=V_{\alpha} \cap V_{\beta}=V'$ and 
$U_{\alpha\beta} := U_{\alpha} \cap U_{\beta} = U'$.
In particular, the subvarieties $U_{\alpha \beta}$ do not depend on the choice of
$\alpha$ and $\beta$.

For a sheaf $\cF$, let $\CZ^1(\cU, \cF)$ and $\CB^1(\cU, \cF)$ denote the closed cocycles and coboundaries 
of $\cF$ with respect to $\cU$.  Recall the definition of the non-commutative ring $\bE=k[F,V]$ 
and the notation about $\HdR(X)$ from Section \ref{Sdefderham}. 
In this section, we compute $\HH^1(X, \cO) \simeq \HH^1(\cU, \cO)$ and $\HdR(X) \simeq \HdR(\cU)$ with respect
to the open covering $\cU$ of $X$.

\subsection{Defining components}

Given a sheaf $\cF$ and a cocycle $\phi \in \CZ^1(\cU, \cF)$, 
consider its components $\phi_{\alpha \infty} \in \Gamma(U', \cF)$ for $\alpha \in B$.
We call $\{\phi_{\alpha \infty} \mid \alpha \in B\}$ the set of {\em defining components} of $\phi$.
The reason is that the remaining components of $\phi$ are determined by the coboundary condition
$\phi_{\alpha\beta} = \phi_{\alpha\infty} - \phi_{\beta\infty}$.
A collection of sections $\{\phi_{\alpha \infty} \in \Gamma(U', \cF) \mid \alpha \in B\}$ 
determines a unique closed cocycle $\phi \in \CZ^1(\cU, \cF)$. 
Thus,
\begin{equation} \label{Edirectsum}
\CZ^1(\cU, \cF) \cong \bigoplus_{\alpha \in B} \Gamma(U', \cF).
\end{equation}

For $\beta\in B$, consider the natural $k$-linear map
$$
\varphi_\beta: \Gamma(U', \cO) \to \CZ^1(\cU, \cO),
$$
whose defining components for $\alpha \in B$ are
\begin{equation*}
(\varphi_\beta(h))_{\alpha\infty} :=
\begin{cases}
h & \text{if $\alpha = \beta$},\\
0 & \text{otherwise.}
\end{cases}
\end{equation*}
Also, consider the $k$-linear map $\varphi_\infty:\Gamma(U', \cO) \to \CZ^1(\cU, \cO)$ defined by: 
\begin{equation*}
(\varphi_\infty(h))_{\alpha\infty} := -h
\quad\text{for all $\alpha\in B$}.
\end{equation*}
Observe that if $h\in  \Gamma(U', \cO)$, then
\begin{equation} \label{Esum}
\sum_{\beta\in B_\infty} \varphi_\beta(h) = 0.
\end{equation}
For $\beta \in B_\infty$, consider the natural $k$-linear map
$$
\psi_\beta: \Gamma(U_{\beta}, \cO) \to \CC^0(\cU, \cO)
$$
given for $\alpha \in B_\infty$ by
\begin{equation}
(\psi_\beta(h))_\alpha := 
\begin{cases}
h & \text{if $\alpha = \beta$},\\
0 & \text{otherwise.}
\end{cases}
\end{equation}

It is straightforward to verify the next lemma.
\begin{lemma} \label{Lcobound}
Suppose $\beta\in B_\infty$
and $h \in \Gamma(U_\beta, \cO)$ (i.e., $h$ is regular at $P_\beta$ if $\beta\neq \infty$
and $h$ is regular at the two points in the support of $D_\infty$ if $\beta=\infty$).  
Then $\varphi_\beta(h|_{U'}) = \delta\psi_\beta(h)$ is a coboundary.
\end{lemma}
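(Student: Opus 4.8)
The plan is to prove the identity $\varphi_\beta(h|_{U'}) = \delta\psi_\beta(h)$ by comparing defining components, exploiting the fact that a closed cocycle in $\CZ^1(\cU, \cO)$ is determined uniquely by its defining components $\phi_{\alpha\infty}$ for $\alpha \in B$ (this is exactly the direct sum decomposition \eqref{Edirectsum}). Since $\delta\psi_\beta(h)$ is a coboundary it is automatically closed, so both sides lie in $\CZ^1(\cU, \cO)$ and it suffices to check that their defining components agree on $U'$.

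First I would record that, for $\alpha \in B$, the defining components of $\delta\psi_\beta(h)$ are
$$(\delta\psi_\beta(h))_{\alpha\infty} = (\psi_\beta(h))_\alpha - (\psi_\beta(h))_\infty,$$
each restricted to $U_{\alpha\infty} = U'$. The hypothesis $h \in \Gamma(U_\beta, \cO)$ is precisely what guarantees that $\psi_\beta(h)$ is a genuine element of $\CC^0(\cU, \cO)$, i.e.\ that $h$ is regular on $U_\beta$, so these restrictions to $U'$ are well-defined. I would then split into the two cases dictated by the definition of $\psi_\beta$. If $\beta \in B$, then $\beta \neq \infty$ forces $(\psi_\beta(h))_\infty = 0$, while $(\psi_\beta(h))_\alpha$ equals $h$ when $\alpha = \beta$ and $0$ otherwise; restricting to $U'$ reproduces exactly the defining components of $\varphi_\beta(h|_{U'})$. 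If $\beta = \infty$, then $(\psi_\infty(h))_\alpha = 0$ for every $\alpha \in B$ while $(\psi_\infty(h))_\infty = h$, so each defining component equals $0 - h = -h$ on $U'$, which matches the defining components of $\varphi_\infty(h|_{U'})$. In both cases the defining components coincide, and hence so do the two cocycles.

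There is no substantive obstacle here: the entire content is the bookkeeping of unwinding the definitions of $\varphi_\beta$, $\psi_\beta$, and the coboundary operator $\delta$, together with the single observation that the regularity hypothesis on $h$ is exactly what makes $\psi_\beta(h)$ a legitimate $0$-cochain. The only point requiring any care is keeping track of the sign coming from the base index $\infty$, which is what produces the uniform $-h$ in the $\beta = \infty$ case as opposed to the single nonzero component in the $\beta \in B$ case.
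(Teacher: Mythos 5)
Your proof is correct: the paper itself offers no argument beyond declaring the lemma ``straightforward to verify,'' and your component-by-component check (using that a closed cocycle is determined by its defining components $\phi_{\alpha\infty}$, together with the case split on $\beta\in B$ versus $\beta=\infty$ and the resulting sign $-h$) is exactly the intended verification. Nothing is missing.
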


\subsection{The space $\HH^1(X, \cO)$}

In this section, we find an $F$-module decomposition of $\HH^1(X, \cO) \simeq \HH^1(\cU, \cO)$.  
The results could be deduced from Section \ref{SHzero} using the duality between 
$\HH^1(X, \cO)$ and $\HH^0(X, \Omega^1)$.
Instead, we take a direct approach, because an explicit description of $\HH^1(X, \cO)$
is helpful for studying $\HdR(X)$ in Section \ref{sub:derham}.
The following lemmas will be useful.

\begin{lemma} \label{Ldivisor}
For $\alpha\in B$ and $j \in \ZZ$, the divisor of poles on $X$ of the function $yx_\alpha^{-j}=y(x-\alpha)^j$ is: 
$$\di_\infty(y(x-\alpha)^{j}) = \max(d_\alpha-2j, 0) P_\alpha + \max(j, 0)D_\infty
+ \sum_{\beta\in B-\set\alpha}d_\beta P_\beta.$$
\end{lemma}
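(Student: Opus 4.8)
The plan is to compute the divisor of poles of the function $y(x-\alpha)^j = yx_\alpha^{-j}$ directly by analyzing its behavior at each point in the support of $\cov^{-1}(B_\infty)$, which is where all the poles can occur. The key input is the structure of the Artin-Schreier cover $\cov: X \to \bP^1$ defined by $y^2 - y = f(x)$, together with the partial fraction form of $f(x)$ from Notation \ref{Nsetup}. First I would record the local behavior of $x$ and $y$ at each ramification point $P_\beta$ and at the points in the support of $D_\infty$. The relevant facts are that $\cov$ is ramified exactly at the points $P_\beta$ for $\beta \in B$ with ramification invariant $d_\beta$, and that $\infty$ is unramified so $D_\infty = \cov^{-1}(\infty)$ consists of two distinct points, each a simple pole of $x$.

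Let me sketch the three local computations. At the point $P_\alpha$ itself: here $x - \alpha$ vanishes, so $(x-\alpha)^j$ has a zero of order proportional to $j$ (scaled by the ramification), while $y$ has a pole coming from the term $f_\alpha(x_\alpha)$ of degree $d_\alpha$. Since $P_\alpha$ is totally ramified with the valuation of $x-\alpha$ being $2$ times a uniformizer's valuation and $y^2 - y = f(x)$ forces the pole order of $y$ at $P_\alpha$ to be $d_\alpha$, the combined order of $y(x-\alpha)^j$ at $P_\alpha$ should be $d_\alpha - 2j$ when this is positive, contributing $\max(d_\alpha - 2j, 0)P_\alpha$. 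At a point $P_\beta$ with $\beta \in B \setminus \{\alpha\}$: here $(x-\alpha)^j$ is a unit, so the pole order equals that of $y$ alone, which is $d_\beta$, independent of $j$; this gives the term $\sum_{\beta \in B - \{\alpha\}} d_\beta P_\beta$. Finally, over $\infty$: since $\infty \notin B$, the cover is unramified there, and $f(x)$ is regular at $\infty$ (it is a sum of terms in $x_\beta k[x_\beta^2]$ each vanishing at infinity), so $y$ is regular at each of the two points of $D_\infty$, while $(x-\alpha)^j$ has a pole of order $j$ there when $j > 0$; this yields $\max(j, 0)D_\infty$.

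I would organize these as valuation computations using $\mathrm{ord}_{P_\beta}$, invoking that $\mathrm{ord}_{P_\beta}(x - \beta) = 2$ for $\beta \in B$ (total ramification) and $\mathrm{ord}_{P_\beta}(y) = -d_\beta$, the latter following from the Artin-Schreier equation and the pole structure of $f$. The case analysis on the sign of $d_\alpha - 2j$ and of $j$ accounts for the $\max$ functions: when $j$ is large enough that $(x-\alpha)^j$ kills the pole of $y$ at $P_\alpha$, the function becomes regular (or vanishes) there, and symmetrically for the $D_\infty$ contribution when $j \le 0$.

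The main obstacle I expect is getting the local valuation at $P_\alpha$ exactly right, since both $y$ (pole of order $d_\alpha$) and $(x-\alpha)^j$ (zero or pole depending on the sign of $j$) are non-units there, and the total ramification means one must track a uniformizer carefully rather than reading off naive pole orders in $x-\alpha$. One must verify that the pole order of $y$ at $P_\alpha$ really equals $d_\alpha$ (not $2d_\alpha$ or $d_\alpha/2$) — this is where the oddness of $d_\alpha$ and the standard theory of Artin-Schreier extensions \cite[Prop.\ III.7.8]{sti} enter, ensuring the conductor computation is consistent — and then confirm that multiplying by $(x-\alpha)^j$, whose valuation at $P_\alpha$ is $-2j$ relative to a uniformizer, shifts the pole order of $y$ by exactly $2j$, producing the clean expression $\max(d_\alpha - 2j, 0)$.
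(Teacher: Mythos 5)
Your proposal is correct and follows essentially the same route as the paper, which simply records $\di(x-\alpha)=2P_\alpha - D_\infty$ and $\di_\infty(y)=\sum_{\beta\in B}d_\beta P_\beta$ and combines them; your local valuation computations at $P_\alpha$, at $P_\beta$ for $\beta\neq\alpha$, and along $D_\infty$ are exactly the expanded form of that argument, and your check that $\mathrm{ord}_{P_\alpha}(y)=-d_\alpha$ via the Artin--Schreier equation is the right justification for the key input.
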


\begin{proof}
Indeed, $\di(x-\alpha) =  2 P_\alpha - D_\infty$ for $\alpha\in B$ and
$$\di_\infty(y) =  \sum_{\beta \in B} d_\beta P_\beta.$$
\end{proof}


Lemma \ref{Ldivisor} implies that $y(x-\alpha)^j \in \Gamma(U', \cO)$ for all $\alpha \in B$ and $j \in \ZZ$.

\begin{lemma}\label{lem:basicfacts}
With notation as above:
\begin{enumerate}
\item[(i)]
$\CZ^1(\cU, \cO) = \gen{ \varphi_\beta((x-\alpha)^j), \varphi_\beta(y(x-\alpha)^j) \mid \alpha, \beta \in B, \ j\in \Z}$.

\item[(ii)]
If $\alpha \in B$, then $\gen{\varphi_\alpha(y(x-\beta)^j) \mid j \geq 0} = \gen{\varphi_{\alpha}(y(x-\alpha)^j) \mid j \geq 0}$
as subspaces of $\CZ^1(\cU, \cO)$ for each $\beta \in B$.
\end{enumerate}
\end{lemma}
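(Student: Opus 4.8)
The plan is to reduce both statements to facts about the ring $\Gamma(U', \cO)$ via the decomposition \eqref{Edirectsum}, and then to settle those facts by partial fractions. First I would record the structure of $\Gamma(U', \cO)$. Since $V'=\bP^1-B_\infty$ avoids every branch point, the function $f$ is regular on $V'$ and the Artin--Schreier cover $\cov$ is étale over $V'$; consequently, inside the function field $k(X)=k(x)\oplus k(x)y$ one has
$$
\Gamma(U', \cO) = \Gamma(V', \cO) \oplus \Gamma(V', \cO)\,y,
$$
where $\Gamma(V', \cO)$ is the ring of rational functions on $\bP^1$ whose poles are supported on $B_\infty$. (Lemma \ref{Ldivisor} already guarantees that the functions $y(x-\alpha)^j$ lie in $\Gamma(U',\cO)$.)

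To prove (i), I would use that under \eqref{Edirectsum} each map $\varphi_\beta$ with $\beta\in B$ is the inclusion of $\Gamma(U',\cO)$ as the $\beta$-th summand of $\CZ^1(\cU,\cO)$. Hence it suffices to show that the functions $(x-\alpha)^j$ and $y(x-\alpha)^j$, for $\alpha\in B$ and $j\in\Z$, span $\Gamma(U',\cO)$ over $k$. By the displayed decomposition this reduces to the single claim $\gen{(x-\alpha)^j \mid \alpha\in B,\ j\in\Z}=\Gamma(V',\cO)$, which is exactly partial fraction decomposition over the algebraically closed field $k$: any $h\in\Gamma(V',\cO)$ is a polynomial in $x$ (its pole at $\infty$) plus, for each $\alpha\in B$, a polynomial without constant term in $x_\alpha=(x-\alpha)^{-1}$ (its principal part at $P_\alpha$). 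The polynomial piece lies in $\gen{(x-\alpha)^j\mid j\geq 0}$ for any fixed $\alpha\in B$, and the principal parts lie in $\gen{(x-\alpha)^{-j}\mid j\geq 1}$. Multiplying by $y$ handles the second summand, giving (i).

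To prove (ii), I would note that $\varphi_\alpha$ is injective (it is inclusion into a direct summand), so the asserted equality of subspaces of $\CZ^1(\cU,\cO)$ is equivalent to $\gen{y(x-\beta)^j\mid j\geq 0}=\gen{y(x-\alpha)^j\mid j\geq 0}$ as subspaces of $\Gamma(U',\cO)$. Both sides equal $y\cdot k[x]$: for any $\gamma\in k$ the substitution $x\mapsto x+\gamma$ is an invertible triangular change of basis, so $\{(x-\gamma)^j\mid j\geq 0\}$ is a $k$-basis of $k[x]$. Thus $\gen{(x-\beta)^j\mid j\geq 0}=k[x]=\gen{(x-\alpha)^j\mid j\geq 0}$, and multiplying by $y$ yields the claim independently of the choice of $\beta$.

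The computations are routine, and there is no serious obstacle here. The only point requiring genuine care is the identification $\Gamma(U', \cO)=\Gamma(V',\cO)\oplus\Gamma(V',\cO)y$, which rests on $f$ being regular on $V'$ so that $\cov$ is unramified there and $y$ generates $\Gamma(U',\cO)$ as a free rank-two module over $\Gamma(V',\cO)$. Once this structural fact is in place, part (i) is immediate from partial fractions and part (ii) from the base-point independence of the monomial basis of $k[x]$.
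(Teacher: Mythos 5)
Your proof is correct and follows essentially the same route as the paper: part (i) reduces via the direct-sum decomposition \eqref{Edirectsum} to the fact that $(x-\alpha)^j$ and $y(x-\alpha)^j$ span $\Gamma(U',\cO)$ (partial fractions plus the rank-two module structure over $\Gamma(V',\cO)$), and part (ii) comes down to both spans equalling $\varphi_\alpha(y\,k[x])$, which is exactly the paper's one-line argument. You simply make explicit the identification $\Gamma(U',\cO)=\Gamma(V',\cO)\oplus\Gamma(V',\cO)y$ that the paper leaves implicit.
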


\begin{proof}
\begin{enumerate}
\item[(i)]
This is immediate from Equation \eqref{Edirectsum} because
$$\CZ^1(\cU, \cO) = \bigoplus_{\beta \in B} \gen{\varphi_\beta(h) \mid h \in \Gamma(U', \cO)}.$$

\item[(ii)]
Both are equal to the subspace $\set{\varphi_\alpha(yh(x)) \mid h(x) \in k[x]}$.
\end{enumerate}
\end{proof}

\begin{lemma} \label{lem:boundary}
Let $\alpha \in B$ and $j \in \ZZ$.  Then:
\begin{enumerate}
\item[(i)]
$\varphi_\beta((x-\alpha)^j) \in \CB^1(\cU,\cO)$ for all $\beta \in B_\infty$.
\item[(ii)]
$\varphi_{\alpha}(y(x-\alpha)^j) \in \CB^1(\cU,\cO)$ if $j>c_\alpha$.
\item[(iii)]
$\varphi_\infty(y(x-\alpha)^j) \in \CB^1(\cU,\cO)$ if $j \leq 0$.
\end{enumerate}
\end{lemma}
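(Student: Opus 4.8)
The plan is to reduce all three statements to Lemma \ref{Lcobound}, which asserts that $\varphi_\beta(h)$ is a coboundary as soon as the function $h \in \Gamma(U',\cO)$ extends to a regular function on $U_\beta$ --- that is, $h$ is regular at $P_\beta$ when $\beta \in B$, and regular along $D_\infty$ when $\beta = \infty$. Each part thereby becomes a pole-counting exercise at a single distinguished point, carried out with the divisor formula of Lemma \ref{Ldivisor} together with $\di(x-\alpha) = 2P_\alpha - D_\infty$.

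For part (i), the divisor of $(x-\alpha)^j$ is $2jP_\alpha - jD_\infty$, and in particular $(x-\alpha)^j \in \Gamma(U',\cO)$ for every $j$. When $\beta \in B \setminus \{\alpha\}$, the function $(x-\alpha)^j$ is regular and nonzero at $P_\beta$, so Lemma \ref{Lcobound} applies immediately. For the two remaining indices $\beta \in \{\alpha, \infty\}$, I would split on the sign of $j$: if $j \geq 0$ then $(x-\alpha)^j$ is regular at $P_\alpha$, while if $j \leq 0$ it is regular along $D_\infty$; in either case one of $\varphi_\alpha((x-\alpha)^j)$, $\varphi_\infty((x-\alpha)^j)$ is seen directly to lie in $\CB^1(\cU,\cO)$. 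The other is then forced to be a coboundary by the linear relation \eqref{Esum}, namely $\sum_{\gamma \in B_\infty} \varphi_\gamma((x-\alpha)^j) = 0$, since every remaining summand has already been placed in $\CB^1(\cU,\cO)$.

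For part (ii), I would invoke Lemma \ref{Ldivisor} with the index $\alpha$: the pole order of $y(x-\alpha)^j$ at $P_\alpha$ is $\max(d_\alpha - 2j, 0)$. Writing $d_\alpha = 2c_\alpha + 1$, the hypothesis $j > c_\alpha$ gives $d_\alpha - 2j = 2(c_\alpha - j) + 1 \leq -1 < 0$, so this pole order vanishes and $y(x-\alpha)^j$ is regular at $P_\alpha$; Lemma \ref{Lcobound} then yields the claim. Part (iii) is the identical computation at $D_\infty$: Lemma \ref{Ldivisor} records pole order $\max(j,0)$ there, which equals $0$ precisely when $j \leq 0$, so $y(x-\alpha)^j$ is regular along $D_\infty$ and Lemma \ref{Lcobound} applies with $\beta = \infty$.

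The argument is essentially bookkeeping once Lemmas \ref{Lcobound} and \ref{Ldivisor} are in hand. The only point demanding care is the case analysis in part (i): neither $P_\alpha$ nor $D_\infty$ is a pole-free point of $(x-\alpha)^j$ for all $j$ simultaneously, so one genuinely needs the relation \eqref{Esum} to transport the coboundary property to the last remaining index. No deeper obstacle is anticipated.
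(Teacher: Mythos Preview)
Your proposal is correct and follows essentially the same route as the paper: parts (ii) and (iii) are handled directly by Lemma~\ref{Lcobound} via the pole computation of Lemma~\ref{Ldivisor}, and part (i) is handled by Lemma~\ref{Lcobound} for all indices except one of $\alpha,\infty$ (depending on the sign of $j$), with the last index recovered from the relation~\eqref{Esum}. The only cosmetic difference is the order of the case split in part~(i).
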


\begin{proof}
\begin{enumerate}
\item[(i)]
Suppose that $\beta\in B$.
If $\beta \neq \alpha$ or if $j \geq 0$, then $(x-\alpha)^j$ is regular at $P_\beta$ and so 
$\varphi_\beta((x-\alpha)^j)\in \CB^1(\cU, \cO)$ by Lemma \ref{Lcobound}. 
For $j \geq 0$, it follows from this and Equation \eqref{Esum} that the cocycle 
$\varphi_\infty((x-\alpha)^j) = -\sum_{\beta\in B} \varphi_\beta((x-\alpha)^j)$
is a coboundary.
If $j<0$, then $\varphi_\infty((x-\alpha)^j) \in \CB^1(\cU,\cO)$ by Lemma \ref{Lcobound}.

Finally, if $\beta =\alpha \neq \infty$ and $j<0$, then $(x-\alpha)^j \in \Gamma(U_\gamma, \cO)$ for
all $\gamma\in B_\infty-\set\alpha$.  By Equation \eqref{Esum}, 
\begin{equation}\label{eqn:alphaalpha}
\varphi_\alpha((x-\alpha)^j) = 
- \sum_{\gamma\in B_\infty-\set\alpha} \varphi_\gamma\bigl((x-\alpha)^j\bigr)
= 
- \sum_{\gamma\in B_\infty - \set\alpha} \delta\psi_ \gamma((x-\alpha)^{j}), 
\end{equation}
which is a coboundary.

\item[(ii)]
If $j > c_\alpha$, then $y(x-\alpha)^j \in \Gamma(U_\alpha, \cO)$ and 
$\varphi_\alpha(y(x-\alpha)^j) = \delta\psi_\alpha(y(x-\alpha)^j)$.
\item[(iii)]
If $j\leq 0$, then $y(x-\alpha)^j \in \Gamma(U_\infty, \cO)$,
and $\varphi_\infty(y(x-\alpha)^j) = \delta\psi_\infty(y(x-\alpha)^j)$.
\end{enumerate}
\end{proof}

Consider the cocycles $\phi_{\alpha, j} \in \CZ^1(\cU, \cO)$ for
$\alpha \in B$ and $j\in \Z$ defined by
$$
\phi_{\alpha, j} := \varphi_\alpha(y (x-\alpha)^{j}).
$$
Given $\phi\in\CZ^1(\cU, \cO)$, denote by $\tilde\phi$ the cohomology
class of $\phi$ in $\HH^1(\cU, \cO)$.
For $\alpha \in B_\infty$, define the map
$$
\tilde\varphi_\alpha: \Gamma(U',\cO) \to \HH^1(\cU, \cO), 
\quad
f\mapsto \varphi_\alpha(f) \bmod{\CB^1(\cU, \cO)}.
$$

The following lemma is a variant of a special case of \cite[Lemma 6]{madden}.

\begin{lemma}\label{Lnextbasis}
A basis for $\HH^1(\cU, \cO)$ is given by the cohomology classes 
$\tilde\phi_{\alpha,j}$ for $\alpha \in B$ and $1 \leq j \leq c_\alpha$,
and $\tilde\phi_{\alpha, 0}$ for $\alpha\in B'$.
\end{lemma}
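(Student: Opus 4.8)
The plan is to deduce the statement from a dimension count together with linear independence, rather than a direct spanning computation. The listed classes number $\sum_{\alpha\in B}c_\alpha+\#B'=r+\sum_{\alpha\in B}c_\alpha=g$, and $\dim_k\HH^1(\cU,\cO)=\dim_k\HH^1(X,\cO)=g$ (from the short exact sequence in Section \ref{Sdefderham} and Lemma \ref{L1formbasis}). Hence $g$ linearly independent classes automatically form a basis. I prefer this route because a direct spanning argument would have to reduce the generators $\varphi_\beta(y(x-\alpha)^j)$ of Lemma \ref{lem:basicfacts}(i) with $j<0$ or $\alpha\neq\beta$; those negative-power and off-diagonal terms do not simplify cleanly using only Lemmas \ref{lem:basicfacts} and \ref{lem:boundary} and Equation \eqref{Esum}, whereas independence sidesteps them entirely.

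First I would record the single relation that explains the shape of the list. Applying Equation \eqref{Esum} to $h=y$ and Lemma \ref{lem:boundary}(iii) (with $j=0$), the cocycle $\sum_{\beta\in B}\phi_{\beta,0}=\sum_{\beta\in B}\varphi_\beta(y)=-\varphi_\infty(y)$ is a coboundary, so $\sum_{\beta\in B}\tilde\phi_{\beta,0}=0$ in $\HH^1(\cU,\cO)$; this is precisely why $\phi_{0,0}$ is omitted. Next, suppose a combination $\Phi:=\sum_{\beta\in B}\sum_{j=1}^{c_\beta}\lambda_{\beta,j}\phi_{\beta,j}+\sum_{\beta\in B'}\mu_\beta\phi_{\beta,0}$ lies in $\CB^1(\cU,\cO)$. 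By Equation \eqref{Edirectsum} its defining component in slot $\beta$ is $y\,p_\beta(x)$, where $p_\beta(x)=\sum_{j=1}^{c_\beta}\lambda_{\beta,j}(x-\beta)^j$ together with a constant term $\mu_\beta$ present only when $\beta\in B'$; thus $\deg p_\beta\le c_\beta$, and $p_0$ has no constant term. Writing $\Phi=\delta\kappa$ with $\kappa_\gamma\in\Gamma(U_\gamma,\cO)$ and $\kappa_\infty=a(x)+b(x)y$ for $a,b\in k(x)$, the coboundary condition becomes $y\,p_\beta=\kappa_\beta-\kappa_\infty$, i.e. $\kappa_\beta=a+(b+p_\beta)y$ must be regular at $P_\beta$ for every $\beta\in B$.

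The crux — and the step I expect to be the main obstacle — is the local analysis at the ramification points and over $\infty$. At $P_\beta$ a parity argument applies: since $\di(x-\beta)=2P_\beta-D_\infty$, every element of $k(x)$ has even order at $P_\beta$, while $\di_\infty(y)=\sum_{\gamma\in B}d_\gamma P_\gamma$ gives $y$ the odd order $-d_\beta$ there. Hence in $\kappa_\beta=a+(b+p_\beta)y$ the two summands have orders of opposite parity and cannot cancel, so each must be regular at $P_\beta$ separately. This forces $\mathrm{ord}_{x=\beta}(b+p_\beta)\ge c_\beta+1$ for every $\beta$; combined with regularity of $\kappa_\beta$ at the unramified points (where the two preimages carry distinct $y$-values), it shows $a,b\in k[x]$.

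Finally I would analyze $\kappa_\infty$ over $\infty$. The cover is unramified there, with $y$ taking the values $0$ and $1$ at the two points of $D_\infty$; because $y$ vanishes at one of them, the orders of $a$ and of $b\,y$ there cannot both be matched, so a nonconstant $a$ or $b$ would leave an uncancelled pole over $\infty$. Thus $a$ and $b$ are constants. Since $p_0$ has no constant term, $\mathrm{ord}_{x=0}(b+p_0)\ge c_0+1$ forces $b=0$; then $\mathrm{ord}_{x=\beta}(p_\beta)\ge c_\beta+1$ with $\deg p_\beta\le c_\beta$ forces $p_\beta\equiv0$ for every $\beta$, whence $\lambda_{\beta,j}=0$ for all $\beta,j$ and $\mu_\beta=0$ for all $\beta\in B'$. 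This gives linear independence and completes the proof. The genuinely delicate points, which I would write out carefully, are the no-cancellation claim over $\infty$ (relying on $y$ vanishing at exactly one point of $D_\infty$) and the verification that the parity dichotomy at $P_\beta$ is exactly the condition encoded by $c_\beta=(d_\alpha-1)/2$.
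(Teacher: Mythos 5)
Your proof is correct, but it runs in the opposite direction from the paper's. The paper proves that the listed classes \emph{span} $\HH^1(\cU,\cO)$ --- reducing the generators $\varphi_\beta(y(x-\alpha)^j)$ of Lemma \ref{lem:basicfacts}(i) to the listed ones via Lemma \ref{lem:boundary}, Equation \eqref{Esum}, and, for the negative-power/off-diagonal terms, an explicit polynomial identity built from the truncated Laurent expansion of $(x-\alpha)^{-j}$ at $\beta$ --- and then gets linear independence for free from the count $\#S=g=\dim\HH^1(\cU,\cO)$. You instead prove \emph{independence} directly, by analyzing what a coboundary $\delta\kappa$ with prescribed defining components $y\,p_\beta(x)$ can look like locally, and get spanning for free from the same count. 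Both are legitimate; your route buys a cleaner conceptual argument (the parity of $\mathrm{ord}_{P_\beta}$ on $k(x)$ versus the odd order $-d_\beta$ of $y$ is exactly the source of the bound $j\le c_\beta$), while the paper's route has the side benefit that it establishes the cocycle-level statement $\CZ^1(\cU,\cO)=\CB^1(\cU,\cO)+\langle\phi_{\alpha,j}\rangle$ explicitly, which is what is actually invoked later to define $\rho$ on all of $\CZ^1(\cU,\cO)$ (though of course that also follows from the basis statement). Incidentally, your parenthetical claim that the negative-power terms ``do not simplify cleanly'' is unduly pessimistic --- the paper does carry out that reduction.

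Two of your steps are stated loosely and deserve the care you promise. First, at the unramified points of $U'$ and over $\infty$, the cleanest way to separate $a$ from $b$ is to compare $\kappa$ with its pullback under the hyperelliptic involution $y\mapsto y+1$: regularity of $a+by$ at both preimages of a point of $\bP^1-B$ forces $b=(a+b(y+1))-(a+by)$ and hence $a$ to be regular there, and the same trick at the two points of $D_\infty$ (where $y$ takes the values $0$ and $1$, since $f$ vanishes at $\infty$) shows $b$, then $a$, has no pole at $\infty$; your ``orders cannot both be matched'' phrasing alone does not rule out a cancellation at one of the two points, since $a$ and $by$ can have equal order at $Q_0$ while differing at $Q_1$. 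Second, you should note explicitly that only $\kappa_\beta$ constrains behaviour at $P_\beta$ (since $P_\beta\notin U_\gamma$ for $\gamma\in B-\set\beta$), which is why the condition $\mathrm{ord}_{x=\beta}(b+p_\beta)\ge c_\beta+1$ is extracted once per branch point. With those points written out, the argument is complete.
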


\begin{proof}
The set of cohomology classes $S=\{\tilde\phi_{\alpha,j} \mid \alpha \in B, \ 1 \leq j \leq c_\alpha\} \cup 
\{\tilde\phi_{\alpha, 0} \mid \alpha\in B'\}$ has cardinality $r + \sum_{\alpha\in B} c_\alpha = g$.
By Lemmas \ref{lem:basicfacts}(i) and \ref{lem:boundary}(i), it suffices to show
that $\varphi_\beta(y(x-\alpha)^j)$ is in the span of $S$ for $\alpha, \beta \in B$ and $j \in \ZZ$.
By Lemmas \ref{lem:basicfacts}(ii) and \ref{lem:boundary}(ii), it suffices to show that 
the span of $S$ contains $\tilde\phi_{0,0}$ and $\tilde\varphi_\beta(y(x-\alpha)^{-j})$
for $\alpha, \beta\in B$ and $j>0$.

The cocycle $\varphi_\infty(y)$ is a coboundary by Lemmas \ref{Lcobound} and \ref{Ldivisor}.
Using this and Equation \eqref{Esum}, one computes in $\HH^1(\cU, \cO)$ that  
$$
\tilde\phi_{0, 0} = \tilde\varphi_0(y) +\tilde\varphi_\infty(y)
= -\sum_{\beta\in B'} \tilde\varphi_\beta(y)
= -\sum_{\beta\in B'} \tilde\phi_{\beta, 0},
$$
which is in the span of $S$.

Now consider $\tilde\varphi_\beta(y(x-\alpha)^{-j})$ for $\alpha, \beta\in B$ and $j>0$.
If $r=0$, then this cocycle is a coboundary by Equation \eqref{Esum} and Lemma \ref{lem:boundary}(iii).
Let $r>0$ and first suppose that $\alpha\neq\beta$.
After a change of coordinates, one can assume without loss of generality
that $\alpha=0$ and $\beta=1$.  For $i \geq 1$, let $b_i \in k$ be such that:
$$
\sum_{i \geq 0} b_i x^i :=  \left(\frac{1-x^{c_0+1}}{x-1}\right)^j.
$$
Then
\begin{eqnarray*}
\frac{y}{(x-1)^j} - \sum_{i \geq 0} b_i y x^i 
&=& \frac{y(1-(1-x^{c_0+1})^j)}{(x-1)^j}\\
&=& \frac{yx^{c_0+1} h(x)}{(x-1)^j}\\
\end{eqnarray*}
for some polynomial $h(x) \in k[x]$. 
The latter function is regular at $P_0$ by Lemma \ref{Ldivisor}.
Therefore, the cocycle
$\phi := \varphi_0({y x^{c_0+1} h(x)}/{(x-1)^j})$
is a coboundary by Lemma \ref{lem:boundary}(ii). 
Using this and Lemma \ref{lem:boundary}(ii) yields that
\begin{eqnarray*}
\tilde\varphi_0(y(x-1)^{-j}) 
&=& \sum_{i \geq 0} b_i \tilde\varphi_0(y x^i)
+ \tilde\varphi_0({y x^{c_0+1} h(x)}/{(x-1)^j})\\
&=& \sum_{i=0}^{c_0} b_i \tilde\phi_{0,i}, 
\end{eqnarray*}
which is in the span of $S$.
(In the general case, the cohomology class $\tilde \varphi_\beta(y(x-\alpha)^{-j})$ can be expressed 
using the Laurent expansion of $1/(x-\alpha)^j$ at $\beta$ truncated at degree $c_\alpha$.)

If $\alpha=\beta$ and $j >0$, one can add the coboundary
$\varphi_\infty(y(x-\alpha)^{-j})$ to $\varphi_\alpha(y(x-\alpha)^{-j})$ and use Equation \eqref{Esum} to see that
$$
\tilde\varphi_\alpha(y (x-\alpha)^{-j}) 
= -\sum_{\gamma\in B-\set\alpha}\tilde\varphi_\gamma(y (x-\alpha)^{-j}),
$$
which reduces the proof to the previous case.
\end{proof}

The next lemma is important for describing the $F$-module structure of $\HH^1(\cU, \cO)$.

\begin{lemma}\label{LactionFH1}
If $\alpha\in B$ and $j\geq 0$, then
$$
F\tilde\phi_{\alpha, j} =
\begin{cases}
\tilde\phi_{\alpha, 2j} & \textrm{if $2j \leq c_\alpha$},\\
0 & \textrm{otherwise.}
\end{cases}
$$
\end{lemma}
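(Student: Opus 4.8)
The plan is to compute $F\phi_{\alpha,j}$ at the level of cocycles and then reduce modulo $\CB^1(\cU,\cO)$. Recall from Section \ref{sub:fv} that $F$ acts on $\CC^1(\cU,\cO)$ componentwise by the $p$-power map, which for $p=2$ is squaring. Since squaring is additive in characteristic $2$, it commutes with the coboundary relation $\phi_{\gamma\delta}=\phi_{\gamma\infty}-\phi_{\delta\infty}$ that determines a closed cocycle from its defining components; one then checks that $F\varphi_\alpha(h)=\varphi_\alpha(h^2)$ for every $h\in\Gamma(U',\cO)$. Applying this with $h=y(x-\alpha)^j$ gives
$$
F\phi_{\alpha,j}=\varphi_\alpha\bigl(y^2(x-\alpha)^{2j}\bigr).
$$

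Next I would substitute the defining equation $y^2=y+f(x)$ of the curve and split this into two pieces, using the $k$-linearity of $\varphi_\alpha$:
$$
F\phi_{\alpha,j}=\varphi_\alpha\bigl(y(x-\alpha)^{2j}\bigr)+\varphi_\alpha\bigl(f(x)(x-\alpha)^{2j}\bigr)=\phi_{\alpha,2j}+\varphi_\alpha\bigl(f(x)(x-\alpha)^{2j}\bigr).
$$
The second term involves only the rational function $g(x):=f(x)(x-\alpha)^{2j}\in k(x)$, which lies in $\Gamma(U',\cO)$: the function $f(x)$ has poles only at the branch points $B$ and no pole at $\infty$ (because each $f_\alpha$ lies in $xk[x^2]$ and has no constant term), while $(x-\alpha)^{2j}$ is a polynomial since $j\geq 0$, so $g$ is pulled back from $\bP^1$ and regular away from $B_\infty$. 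Writing $g(x)$ in partial fractions expresses it as a $k$-linear combination of powers $(x-\beta)^m$ with $\beta\in B$ and $m\in\ZZ$ (the polynomial part being captured by taking $\beta=0$). By Lemma \ref{lem:boundary}(i) and linearity, $\varphi_\alpha(g(x))$ is therefore a coboundary, so that $F\tilde\phi_{\alpha,j}=\tilde\phi_{\alpha,2j}$ in $\HH^1(\cU,\cO)$.

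Finally I would invoke Lemma \ref{lem:boundary}(ii): when $2j>c_\alpha$ the cocycle $\phi_{\alpha,2j}=\varphi_\alpha(y(x-\alpha)^{2j})$ is itself a coboundary, whence $\tilde\phi_{\alpha,2j}=0$ and $F\tilde\phi_{\alpha,j}=0$; when $2j\leq c_\alpha$ the class $\tilde\phi_{\alpha,2j}$ is (by Lemma \ref{Lnextbasis}) in the span of the basis and survives, giving the stated formula. I do not expect a serious obstacle here, as the argument is essentially a single computation. The two points requiring care are verifying that squaring passes through $\varphi_\alpha$ as claimed --- this relies on $-1=1$ in characteristic $2$, so that the component values $\{0,\pm h\}$ of $\varphi_\alpha(h)$ square to the corresponding components of $\varphi_\alpha(h^2)$ --- and confirming that $g(x)$ genuinely defines a section over $U'$ so that Lemma \ref{lem:boundary}(i) applies term by term after the partial fraction expansion.
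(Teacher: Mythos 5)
Your proposal is correct and follows essentially the same route as the paper: apply $F$ componentwise by squaring, substitute $y^2 = y + f(x)$ to get $F\phi_{\alpha,j} = \phi_{\alpha,2j} + \varphi_\alpha\bigl(f(x)(x-\alpha)^{2j}\bigr)$, kill the second term via partial fractions and Lemma \ref{lem:boundary}(i), and dispose of the case $2j > c_\alpha$ via Lemma \ref{lem:boundary}(ii). The extra care you take in verifying that squaring commutes with $\varphi_\alpha$ and that $f(x)(x-\alpha)^{2j}$ lies in $\Gamma(U',\cO)$ is a welcome elaboration of details the paper leaves implicit.
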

\begin{proof}
Since $(F\phi_{\alpha, j})_{\beta\gamma} = (\phi_{\alpha, j})_{\beta\gamma}^2$, one computes that
\begin{eqnarray*}
(y (x-\alpha)^{j})^2
&=& (y + f(x)) (x-\alpha)^{2j}\\
&=& y (x-\alpha)^{2j} + f(x) (x-\alpha)^{2j}.
\end{eqnarray*}
The statement follows from the definition of $\tilde \phi_{\alpha, j}$ and Lemma \ref{lem:boundary}(i).
\end{proof}

Now define
\begin{eqnarray*}
W''_{\alpha,\sss} &:=& \gen{\tilde\phi_{\alpha,0}}
\quad\textrm{for $\alpha \in B'$, and}\\
W''_{\alpha,\nil} &:=& \gen{\tilde\phi_{\alpha,j} \mid 1\leq j \leq c_\alpha}
\quad\textrm{for $\alpha \in B$}.
\end{eqnarray*}

\begin{lemma}\label{H1decomposition}
The subspaces ${W}''_{\alpha, \sss}$ and ${W}''_{\alpha, \nil}$ of $\HH^1(\cU, \cO)$
are stable under the action of Frobenius for each $\alpha \in B$.
There is an isomorphism of $F$-modules:
$$
\HH^1(\cU, \cO) \simeq
\bigoplus_{\alpha\in B'} {W}''_{\alpha,\sss} 
\oplus
\bigoplus_{\alpha\in B} {W}''_{\alpha,\nil}.
$$
\end{lemma}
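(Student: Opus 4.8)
The plan is to prove Lemma \ref{H1decomposition} by combining the basis from Lemma \ref{Lnextbasis} with the explicit Frobenius action computed in Lemma \ref{LactionFH1}. First I would observe that by Lemma \ref{Lnextbasis}, the cohomology classes $\tilde\phi_{\alpha,0}$ for $\alpha \in B'$ and $\tilde\phi_{\alpha,j}$ for $\alpha \in B$ and $1 \leq j \leq c_\alpha$ form a basis of $\HH^1(\cU, \cO)$. By the definitions of $W''_{\alpha,\sss}$ and $W''_{\alpha,\nil}$, these basis elements are precisely the spanning vectors of the summands, so the direct-sum decomposition as $k$-vector spaces is immediate; what remains is to verify that each summand is $F$-stable and that the decomposition respects the $F$-action.

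Next I would check $F$-stability using Lemma \ref{LactionFH1}. For $W''_{\alpha,\nil} = \gen{\tilde\phi_{\alpha,j} \mid 1 \leq j \leq c_\alpha}$, the lemma gives $F\tilde\phi_{\alpha,j} = \tilde\phi_{\alpha,2j}$ when $2j \leq c_\alpha$ and $0$ otherwise; in either case the image is a basis vector of $W''_{\alpha,\nil}$ (note $2j \geq 2 > 0$ when $j \geq 1$, so we never land in the $\sss$-part), hence $W''_{\alpha,\nil}$ is Frobenius-stable. For $W''_{\alpha,\sss} = \gen{\tilde\phi_{\alpha,0}}$ with $\alpha \in B'$, applying Lemma \ref{LactionFH1} with $j=0$ gives $F\tilde\phi_{\alpha,0} = \tilde\phi_{\alpha,0}$ (since $2 \cdot 0 = 0 \leq c_\alpha$), so $W''_{\alpha,\sss}$ is also $F$-stable. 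The key point making the decomposition an $F$-module direct sum is that Frobenius preserves the index $\alpha$ — the squaring computation in Lemma \ref{LactionFH1} only changes the exponent $j$, never the branch point $\alpha$ — so $F$ acts diagonally with respect to the decomposition indexed by $\alpha \in B$.

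Finally I would assemble these observations: since the basis of Lemma \ref{Lnextbasis} partitions exactly into the spanning sets of the $W''_{\alpha,\sss}$ and $W''_{\alpha,\nil}$, and since each such subspace is $F$-stable with $F$ never mixing distinct branch points, the internal direct sum $\bigoplus_{\alpha \in B'} W''_{\alpha,\sss} \oplus \bigoplus_{\alpha \in B} W''_{\alpha,\nil}$ is an isomorphism of $F$-modules onto $\HH^1(\cU, \cO)$. Since essentially all the substantive content has already been established in the two preceding lemmas, I expect no genuine obstacle here; the proof is a short bookkeeping argument and should read as ``This follows immediately from Lemmas \ref{Lnextbasis} and \ref{LactionFH1},'' much as Lemma \ref{L1form} was deduced from its two predecessors. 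The only point requiring a moment of care is confirming that the $\nil$-part is genuinely stable and does not leak into the $\sss$-part under $F$, which is guaranteed by the strict inequality $2j > 0$ for $j \geq 1$.
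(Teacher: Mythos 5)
Your proposal is correct and follows exactly the paper's route: the paper's proof is the one-line ``This follows immediately from Lemmas \ref{Lnextbasis} and \ref{LactionFH1},'' and your write-up simply makes explicit the bookkeeping (basis partition, $F$-stability of each summand, $F$ preserving the index $\alpha$) that the paper leaves implicit.
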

\begin{proof}
This follows immediately from Lemmas \ref{Lnextbasis} and \ref{LactionFH1}.
\end{proof}

\subsection{Auxiliary map}\label{sub:auxiliary}

The next goal is to define a section $\sigma: \HH^1(X, \cO) \to \HdR(X)$.
To do this, the first step will be to define a homomorphism 
$\rho:\CZ^1(\cU, \cO) \to \CC^0(\cU, \Omega^1)$ by
defining its components $\rho_\alpha: \CZ^1(\cU, \cO) \to \Gamma(U_\beta, \Omega^1)$ for $\alpha, \beta \in B$.
Given $\phi \in\CZ^1(\cU, \cO)$ and $\alpha \in B$, 
the idea is to separate $d\phi$ into two parts: the first part will be  
regular at $P_\alpha$ and thus belong to $\Gamma(U_\alpha, \Omega^1)$;
the second part will be regular away from $P_\alpha$
and hence belong to $\Gamma(U_\beta, \Omega^1)$
for every $\beta \neq\alpha$.

\begin{notation} \label{Ntruncate}
Define the {\em truncation} operator $\OOp_{\geq i} :k[x,x^{-1}] \to k[x,x^{-1}]$ by
$$
\OOp_{\geq i}\Big(\sum_{j} a_j x^j \Big) := \sum_{j \geq i} a_j x^{j}.
$$
Operators $\OOp_{> i},\OOp_{\leq i},\OOp_{< i}:k[x,x^{-1}] \to k[x,x^{-1}]$ can be defined analogously. 
These operators can also be defined on $k[x_\alpha, x_\alpha^{-1}]$.
To clarify some ambiguity in notation, if $m(x_\alpha) \in k[x_\alpha, x_\alpha^{-1}]$,
then let $\OOp_{\geq i}(m(x_\alpha))$ denote $\OOp_{\geq i}(m(x))|_{x=x_\alpha}$.
\end{notation}

Recall that $x_\alpha := \xalpha$, and so $\phi_{\alpha,j} = \varphi_\alpha(yx_\alpha^{-j})$.
Then
\begin{equation} \label{dyxj}
d(yx_\alpha^{-j}) = - j x_\alpha^{-j-1} y \dx_\alpha + x_\alpha^{-j} dy. 
\end{equation}
Using partial fractions and the fact that  $\dy = -d(f(x))$, one sees that
\begin{equation} \label{dyparfrac} 
\dy = - \sum_{\beta\in B} f_\beta'(x_\beta) \dx_\beta.
\end{equation} 
In light of these facts, consider the following definition.
\begin{notation} \label{Nrands}
For $\alpha \in B$ and $j \geq 0$, define
\begin{eqnarray*}
R_{\alpha, j} &:=& \OOp_{\geq 0}\Bigl(x_\alpha^{-j} f'_\alpha(x_\alpha)\Bigr) \dx_\alpha;\\
S_{\alpha, j} &:=& d(yx_\alpha^{-j}) + R_{\alpha, j}.
\end{eqnarray*}
\end{notation}

\begin{remark}\label{rmk:Rjalpha}
Let $a_{\alpha, i}\in k$ be the coefficients of the 
(odd-power) monomials of the polynomials $f_\alpha(x_\alpha)$ defined 
in the partial fraction decomposition \eqref{eqn:pfd}:
\begin{equation*}
f_\alpha(x_\alpha) = \sum_{i=0}^{c_\alpha} a_{\alpha,i} x_\alpha^{2i+1}.
\end{equation*}
Then
\begin{eqnarray*}
R_{\alpha, j} 
&=& \sum_{j/2 \leq i \leq c_\alpha} a_{\alpha, i} x_\alpha^{2i - j} dx_\alpha\\
&=& \sum_{j/2 \leq i \leq c_\alpha}  a_{\alpha, i} \omega_{\alpha, 2i - j + 1}.\\
\end{eqnarray*}
\end{remark}

\begin{lemma} \label{LpoleRS}
Let $\alpha \in B$ and  $j\geq 0$.
\begin{enumerate}

\item The differential form $R_{\alpha, j}$ is regular away from $P_\alpha$, i.e.,
$R_{\alpha,j} \in \Gamma(U_\beta, \Omega^1)$ for all $\beta \in B_\infty - \{\alpha\}$

\item The differential form $S_{\alpha, j}$ is regular at $P_\alpha$ for $0\leq j \leq c_\alpha$,
i.e., $S_{\alpha, j} \in \Gamma(U_\alpha, \Omega^1)$.
\end{enumerate}
\end{lemma}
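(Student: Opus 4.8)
The plan is to prove both regularity statements by examining the relevant differentials in local coordinates at each point of $X$ and comparing with the divisor computations already available in the excerpt. For part (1), the key observation comes from Remark \ref{rmk:Rjalpha}, which expresses $R_{\alpha,j}$ as a $k$-linear combination of the holomorphic forms $\omega_{\alpha, 2i-j+1}$ with $j/2 \le i \le c_\alpha$. Since each such index satisfies $1 \le 2i-j+1$, and (when $2i-j+1 \le c_\alpha$) these are among the holomorphic forms of Lemma \ref{L1formbasis}, the only issue is their behavior at $P_\alpha$ itself. I would therefore invoke the divisor formula \eqref{eqn:divomega}, namely $\di(\omega_{\alpha,j'}) = 2(c_\alpha - j')P_\alpha + (j'-1)D_\infty + \sum_{\beta \neq \alpha}(d_\beta+1)P_\beta$, and read off directly that $\omega_{\alpha,j'}$ has no pole at any $P_\beta$ with $\beta \in B - \{\alpha\}$ and no pole along $D_\infty$ precisely when $j' \ge 1$. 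Thus each summand of $R_{\alpha,j}$, being $\omega_{\alpha, 2i-j+1}$ with $2i-j+1 \ge 1$, lies in $\Gamma(U_\beta, \Omega^1)$ for every $\beta \in B_\infty - \{\alpha\}$, and summing gives the claim.

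For part (2), I would start from the definition $S_{\alpha,j} = d(yx_\alpha^{-j}) + R_{\alpha,j}$ and use the explicit expansion \eqref{dyxj}, which gives $d(yx_\alpha^{-j}) = -jx_\alpha^{-j-1}y\,dx_\alpha + x_\alpha^{-j}\,dy$. Substituting the partial-fraction formula \eqref{dyparfrac} for $dy$ and separating the $\beta = \alpha$ term from the $\beta \neq \alpha$ terms, the contributions from $\beta \neq \alpha$ are already regular at $P_\alpha$ (they involve $f_\beta'(x_\beta)\,dx_\beta$, which is holomorphic near $P_\alpha$). The potentially singular part at $P_\alpha$ therefore comes from $-jx_\alpha^{-j-1}y\,dx_\alpha - x_\alpha^{-j}f_\alpha'(x_\alpha)\,dx_\alpha$ together with the correction $R_{\alpha,j} = \OOp_{\ge 0}(x_\alpha^{-j}f_\alpha'(x_\alpha))\,dx_\alpha$. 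The entire point of the definition of $R_{\alpha,j}$ as the truncation $\OOp_{\ge 0}$ is that it cancels the nonnegative-power (polynomial-in-$x_\alpha$) part of $x_\alpha^{-j}f_\alpha'(x_\alpha)$; what remains is a strictly negative power tail, which I must then check is compensated by the $y$-term.

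The main obstacle, and the heart of the computation, is controlling the order of $S_{\alpha,j}$ at $P_\alpha$ using the relation $y^2 - y = f(x)$. Near $P_\alpha$ the function $y$ has a pole of order $d_\alpha$ (this is implicit in Lemma \ref{Ldivisor}), and $x_\alpha = (x-\alpha)^{-1}$ is a local parameter there up to the ramification; so I would pass to the local ring at $P_\alpha$, expand $y$ as a Laurent series in a uniformizer, and verify that the leading singular terms of $-jx_\alpha^{-j-1}y\,dx_\alpha$ exactly match, with opposite sign, the leading terms of the truncated expression $\bigl(x_\alpha^{-j}f_\alpha'(x_\alpha) - \OOp_{\ge 0}(x_\alpha^{-j}f_\alpha'(x_\alpha))\bigr)\,dx_\alpha$. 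The constraint $0 \le j \le c_\alpha$ should be exactly what guarantees that no uncancelled pole survives: the degree of $f_\alpha$ being $d_\alpha = 2c_\alpha + 1$ bounds the negative powers arising, and $j \le c_\alpha$ keeps the $y$-contribution from the multiplication-by-$x_\alpha^{-j-1}$ within the range the correction term can absorb. I expect this leading-order matching to be a short but delicate Laurent-coefficient comparison rather than a conceptual difficulty, so I would organize it as a single local computation at $P_\alpha$, citing \eqref{dyxj}, \eqref{dyparfrac}, and Lemma \ref{Ldivisor} for the orders of poles of $y$ and $x_\alpha$.
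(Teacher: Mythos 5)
Part (1) of your proposal is exactly the paper's argument: Remark \ref{rmk:Rjalpha} writes $R_{\alpha,j}$ as a combination of $\omega_{\alpha,2i-j+1}$ with $2i-j+1\geq 1$, and Equation \eqref{eqn:divomega} shows each such form is regular away from $P_\alpha$. For part (2) your three-term decomposition
$$S_{\alpha,j}=-jx_\alpha^{-j-1}y\,dx_\alpha-\OOp_{<0}\bigl(x_\alpha^{-j}f'_\alpha(x_\alpha)\bigr)\,dx_\alpha-\sum_{\beta\in B-\set{\alpha}}x_\alpha^{-j}f'_\beta(x_\beta)\,dx_\beta$$
is also the paper's starting point. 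The gap is in how you propose to finish: you expect the regularity of $S_{\alpha,j}$ at $P_\alpha$ to come from a cancellation of leading singular Laurent coefficients between the $y$-term and the tail $\OOp_{<0}(x_\alpha^{-j}f'_\alpha(x_\alpha))\,dx_\alpha$. No such cancellation exists or is needed, because each of the three terms is \emph{separately} regular at $P_\alpha$, by counting orders. Using $\mathrm{ord}_{P_\alpha}(x_\alpha)=-2$, $\mathrm{ord}_{P_\alpha}(y)=-d_\alpha$ (Lemma \ref{Ldivisor}) and $\mathrm{ord}_{P_\alpha}(dx_\alpha)=d_\alpha-3$ (Equation \eqref{eqn:divdxalpha}), the first term has order $2(j+1)-d_\alpha+(d_\alpha-3)=2j-1\geq 1$ when its coefficient $j$ is nonzero mod $2$; the tail consists of negative powers of $x_\alpha$, i.e.\ positive powers of $x-\alpha$, each with a zero of order at least $2$ at $P_\alpha$, which absorbs the worst case $\mathrm{ord}_{P_\alpha}(dx_\alpha)=-2$ when $d_\alpha=1$; and the third term is visibly regular. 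In particular you have the singular and regular parts of $x_\alpha^{-j}f'_\alpha(x_\alpha)$ reversed: near $P_\alpha$ it is the \emph{nonnegative} powers of $x_\alpha$ that blow up (they are removed by subtracting off $R_{\alpha,j}$), while the leftover negative-power tail vanishes there. The comparison of leading singular terms you describe would find nothing to match on either side.

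A secondary symptom of the same misunderstanding is your claim that the hypothesis $0\leq j\leq c_\alpha$ is ``exactly what guarantees that no uncancelled pole survives.'' In fact the order count above never uses $j\leq c_\alpha$; that restriction reflects where the lemma is applied (it is for $2j>c_\alpha$ that $\phi_{\alpha,2j}$ becomes a coboundary, handled separately in Lemma \ref{lem:twoj}), not a constraint needed for regularity of $S_{\alpha,j}$ at $P_\alpha$. So while your decomposition is correct and the computation you would eventually perform is short, as written the proposal leaves the crux unexecuted and predicts the wrong mechanism for it.
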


\begin{proof}
\begin{enumerate}
\item This follows from Remark \ref{rmk:Rjalpha} and 
Equation \eqref{eqn:divomega}.

\item
By Notation \ref{Ntruncate}, Notation \ref{Nrands} and Equations \eqref{dyxj} and \eqref{dyparfrac},
\begin{eqnarray}
S_{\alpha,j}
&=&
d(yx_\alpha^{-j})+\OOp_{\geq 0}(x_\alpha^{-j} f'_\alpha(x_\alpha)) \dx_\alpha\\
&=&
- j x_\alpha^{-j-1} y \dx_\alpha -\OOp_{< 0}(x_\alpha^{-j} f'_\alpha(x_\alpha)) \dx_\alpha
- \sum_{\beta\in B-\set\alpha} x_\alpha^{-j} f'_{\beta} (x_\beta) \dx_\beta
 \label{E3terms}
\end{eqnarray}

For the first part of Equation \eqref{E3terms}, note that the order of vanishing of $x_\alpha^{-j-1} y \dx_\alpha$
at $P_\alpha$ is $2d_\alpha -1 +2j$ by Lemma \ref{Ldivisor} and Equation \eqref{eqn:divdxalpha}, 
and so this term is regular at $P_\alpha$. 

For the second part of Equation \eqref{E3terms}, note that
$\OOp_{< 0}(x_\alpha^{-j} f'_\alpha(x_\alpha))$ is contained in $x_\alpha^{-1}k[x_\alpha^{-1}]$.
Thus $\OOp_{< 0}(x_\alpha^{-j} f'_\alpha(x_\alpha))$ has a zero of order at least $2$ at $P_\alpha$.
As seen in the proof of Lemma \ref{L1formbasis}, $\dx_\alpha$ has a zero of order $d_\alpha-3$ at $P_\alpha$.
Thus $\OOp_{< 0}(x_\alpha^{-j} f'_\alpha(x_\alpha))\dx_\alpha$ is regular at $P_\alpha$.

The last part of Equation \eqref{E3terms} is regular at $P_\alpha$ since $x_\alpha^{-1}$ and $f'_{\beta} (x_\beta) \dx_\beta$ are
regular at $P_\alpha$.
\end{enumerate}
\end{proof}

\subsection{Definition of $\rho$}

We define a $k$-linear morphism $$\rho: \CZ^1(\cU, \cO) \to \CC^0(\cU,\Omega^1)$$ as follows.

\subsubsection{Definition of $\rho$ on $\CB^1(\cU, \cO)$:} \label{sssB}
If $\phi \in \CB^1(\cU, \cO)$,
then $\phi = \delta\kappa$ for some $\kappa\in \CC^0(\cU, \cO)$.
Define
\begin{equation*}
\rho(\phi) := d\kappa,
\end{equation*}
with differentiation performed component-wise. 
This map is well-defined,
since if $\kappa$ is regular at $P\in X(k)$, then so is $d\kappa$.
Moreover, if $\kappa'$ is another element such that $\phi = \delta\kappa'$,
then $\delta(\kappa - \kappa') = 0$ and therefore
$\kappa - \kappa' \in \HH^0(\cU, \cO)$ is constant and annihilated by $d$.
Let $\rho_\beta(\phi)$ denote $(\rho(\phi))_\beta$.

It follows from the definition that $\cC(\rho(\CB^1(\cU, \cO))) = 0$,
since the Cartier operator annihilates all exact differential forms.
Explicitly, the map $\rho$ is computed as follows.
\begin{lemma}\label{lem:rhocobexact}
\begin{enumerate}
\item[(i)] If $\alpha\in B_\infty$ and $h\in \Gamma(U_\alpha, \cO)$, then
$\rho\varphi_\alpha(h|_{U'}) = d\psi_\alpha(h).$
\item[(ii)]
If $\alpha\in B$ and $j\leq 0$, then
$$
\rho\varphi_\alpha((x-\alpha)^{j})
= - \sum_{\gamma\in B_\infty -\set\alpha} d\psi_\gamma((x-\alpha)^{j}).
$$
\end{enumerate}
\end{lemma}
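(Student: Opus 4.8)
The plan is to reduce both statements to the definition of $\rho$ on coboundaries given in Subsection \ref{sssB}, namely that $\rho(\delta\kappa) = d\kappa$ for $\kappa \in \CC^0(\cU, \cO)$. In each case the relevant cocycle is exhibited as an explicit coboundary $\delta\kappa$ through the maps $\psi_\gamma$, and $\rho$ is then computed by applying $d$ componentwise to $\kappa$.

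For part (i), I would observe that the hypothesis $h \in \Gamma(U_\alpha, \cO)$ is exactly what is needed to invoke Lemma \ref{Lcobound}, which gives $\varphi_\alpha(h|_{U'}) = \delta\psi_\alpha(h)$. Hence $\varphi_\alpha(h|_{U'})$ lies in $\CB^1(\cU, \cO)$, and the definition of $\rho$ in Subsection \ref{sssB} yields $\rho\varphi_\alpha(h|_{U'}) = d\psi_\alpha(h)$ directly. Nothing further is required, since the well-definedness of $\rho$ on coboundaries has already been checked there.

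For part (ii), the point is first to rewrite $\varphi_\alpha((x-\alpha)^j)$ as an explicit coboundary. Since $j \leq 0$, the function $(x-\alpha)^j$ is regular at every point $P_\gamma$ with $\gamma \in B_\infty - \{\alpha\}$, so $(x-\alpha)^j \in \Gamma(U_\gamma, \cO)$ and Lemma \ref{Lcobound} applies to each such $\gamma$. Combining this with the relation $\sum_{\beta \in B_\infty} \varphi_\beta((x-\alpha)^j) = 0$ from Equation \eqref{Esum} --- precisely the computation already carried out in Equation \eqref{eqn:alphaalpha} --- gives
$$
\varphi_\alpha((x-\alpha)^j) = -\sum_{\gamma \in B_\infty - \{\alpha\}} \delta\psi_\gamma((x-\alpha)^j) = \delta\Big(-\sum_{\gamma \in B_\infty - \{\alpha\}} \psi_\gamma((x-\alpha)^j)\Big).
$$
Applying the definition $\rho(\delta\kappa) = d\kappa$ with $\kappa = -\sum_{\gamma \in B_\infty - \{\alpha\}} \psi_\gamma((x-\alpha)^j)$ and using the linearity of $d$ then produces the claimed formula.

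I do not expect a genuine obstacle: both parts are bookkeeping that chains together Lemma \ref{Lcobound}, Equation \eqref{Esum}, and the coboundary definition of $\rho$. The only point demanding a moment's care is the degenerate case $j = 0$ in part (ii), where $(x-\alpha)^0 = 1$ is constant; there $d\psi_\gamma(1) = 0$ for every $\gamma$ and both sides vanish, so the formula continues to hold trivially.
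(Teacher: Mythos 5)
Your proof is correct and follows exactly the paper's route: part (i) is Lemma \ref{Lcobound} plus the definition of $\rho$ on coboundaries, and part (ii) is the coboundary expression from Equation \eqref{eqn:alphaalpha} (i.e.\ Lemma \ref{Lcobound} combined with Equation \eqref{Esum}) followed by the same definition. The remark about $j=0$ is a harmless extra check; no further detail is needed.
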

\begin{proof}
Part (i) is immediate from the definition of the map $\rho$ and Lemma \ref{Lcobound}.

Part (ii) follows from part (i), Equation \eqref{eqn:alphaalpha}
and the definition of $\rho$.
\end{proof}

\begin{example}\label{eg:psi} 
The value of $\rho$ on the $1$-coboundary $\varphi_\alpha(f(x) x_\alpha^{-j})$ if $\alpha\in B$ and $j\geq 0$:
Let 
$$
r_{\alpha, j} := \OOp_{>0}\bigl( x_\alpha^{-j} f_\alpha(x_ \alpha) \bigr)
\text{ and }
s_{\alpha, j} := \OOp_{\leq 0}\bigl( x_\alpha^{-j} f_\alpha(x_ \alpha)
\bigr)
+ \sum_{\beta\neq\alpha} x_\alpha^{-j} f_\beta(x_\beta).
$$
Then
$$
f(x) x_\alpha^{-j} = r_{\alpha, j} + s_{\alpha, j},
$$
and $r_{\alpha, j}$ has a pole at $P_\alpha$, but is regular everywhere
else, while $s_{\alpha, j}$ is regular at $P_\alpha$.
Thus,
\begin{equation*}
\varphi_\alpha(f(x) x_\alpha^{-j}) =
\delta \psi_\alpha(s_{\alpha, j}) - 
\sum_{\beta\in B_\infty - \set\alpha} 
\delta\psi_\beta( r_{\alpha,j}).
\end{equation*}

Therefore, for $\beta\neq\alpha$, by Lemma \ref{lem:rhocobexact},
$\rho_\beta \varphi_\alpha(f(x) x_\alpha^{-j}) = -d(r_{\alpha, j})$.
Since $f_\alpha(x_\alpha) \in x_\alpha k[x_\alpha^2]$, this simplifies to
\begin{equation} \label{EhereR}
\rho_\beta \varphi_\alpha(f(x) x_\alpha^{-j}) = 
\begin{cases}
-R_{\alpha, j} & \text{if $j$ is even,}\\
0 & \text{if $j$ is odd.}
\end{cases}
\end{equation}
Similarly,
\begin{equation} \label{EhereS}
\rho_\alpha \varphi_\alpha(f(x) x_\alpha^{-j}) 
= \begin{cases}
-S_{\alpha, j} & \text{if $j$ is even,}\\
 d\bigl(f(x) x_\alpha^{-j}\bigr) & \text{if $j$ is odd.}
\end{cases}
\end{equation}
\end{example}

\subsubsection{Definition of $\rho_\beta$ on $\CZ^1(\cU,\cO)$:} \label{sssZ}

By Lemma \ref{Lnextbasis}, $\CZ^1(\cU, \cO)$ is generated by 
$\CB^1(\cU, \cO)$ and $\phi_{\alpha,j}$ for $\alpha \in B$ and $0 \leq j \leq c_\alpha$.
For $\alpha, \beta \in B$, define
\begin{equation*}
\rho_\beta(\phi_{\alpha, j}) =
\begin{cases}
R_{\alpha,j}
& \text{if $\beta \not = \alpha$},\\
S_{\alpha, j}
& \text{if $\beta=\alpha$},
\end{cases}
\end{equation*}
and extend $\rho_\beta$ to $\CZ^1(\cU,\cO)$ linearly.
For all $\beta \in B -\set\alpha$, note that
\begin{equation*}
\rho_\alpha(\phi_{\alpha, j})=d(yx_\alpha^{-j}) + \rho_\beta(\phi_{\alpha, j}).
\end{equation*}

\begin{lemma}
There is a well-defined map $\rho: \CZ^1(\cU,\cO) \to \CC^0(\cU,\Omega^1)$ given by
$$
\rho:=\bigoplus_{\beta\in B_\infty} \rho_\beta.
$$
\end{lemma}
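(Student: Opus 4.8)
The plan is to verify that the two prescriptions for $\rho_\beta$---the one on coboundaries in Section~\ref{sssB} and the one on the cocycles $\phi_{\alpha,j}$ in Section~\ref{sssZ}---patch together consistently on all of $\CZ^1(\cU,\cO)$. By Lemma~\ref{Lnextbasis}, the space $\CZ^1(\cU,\cO)$ is spanned by $\CB^1(\cU,\cO)$ together with the finite set $\{\phi_{\alpha,j}\mid \alpha\in B,\ 0\le j\le c_\alpha\}$. Since $\rho_\beta$ is already defined on each of these two pieces and extended $k$-linearly, the only thing to check is that every linear relation $\sum_{\alpha,j} c_{\alpha,j}\,\phi_{\alpha,j} + b = 0$ with $b\in\CB^1(\cU,\cO)$ is respected, i.e.\ that $\sum_{\alpha,j} c_{\alpha,j}\,\rho_\beta(\phi_{\alpha,j}) + \rho_\beta(b) = 0$ in $\Gamma(U_\beta,\Omega^1)$ for each $\beta\in B_\infty$.

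First I would pin down the relations. Reducing modulo $\CB^1(\cU,\cO)$, the relations among the $\phi_{\alpha,j}$ correspond to linear dependencies among the classes $\tilde\phi_{\alpha,j}$ in $\HH^1(\cU,\cO)$. By Lemma~\ref{Lnextbasis} the classes with $\alpha\in B,\ 1\le j\le c_\alpha$ together with $\tilde\phi_{\beta,0}$ for $\beta\in B'$ form a basis, while the generating set contains exactly one additional element, namely $\tilde\phi_{0,0}$. A dimension count (the generating set has $\sum_{\alpha\in B}(c_\alpha+1)=g+1$ elements, whereas $\dim_k\HH^1(\cU,\cO)=g$) confirms that the space of relations is one-dimensional. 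Its generator is supplied by the computation in the proof of Lemma~\ref{Lnextbasis}: using $\sum_{\beta\in B_\infty}\varphi_\beta(y)=0$ from Equation~\eqref{Esum} and the fact that $\varphi_\infty(y)=\delta\psi_\infty(y)\in\CB^1(\cU,\cO)$ (valid because $y\in\Gamma(U_\infty,\cO)$ by Lemma~\ref{Ldivisor}, hence Lemma~\ref{Lcobound} applies), one obtains the single relation
$$
\phi_{0,0} + \sum_{\beta\in B'}\phi_{\beta,0} + \varphi_\infty(y) = 0 .
$$
Thus well-definedness reduces to checking that $\rho_\beta$ annihilates the left-hand side for every $\beta\in B_\infty$.

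The remaining step is this explicit verification, which is the computational heart of the proof. By Remark~\ref{rmk:Rjalpha} the truncation is trivial when $j=0$, so $R_{\alpha,0}=f'_\alpha(x_\alpha)\,dx_\alpha$, and hence $S_{\alpha,0}=d(y)+R_{\alpha,0}$. Combining this with $dy=-\sum_{\beta\in B}f'_\beta(x_\beta)\,dx_\beta=-\sum_{\beta\in B}R_{\beta,0}$ from Equation~\eqref{dyparfrac} yields the key identity $S_{\alpha,0}=-\sum_{\beta\in B-\set\alpha}R_{\beta,0}$. Feeding in the definitions of Section~\ref{sssZ} (so $\rho_\beta(\phi_{\alpha,0})=R_{\alpha,0}$ for $\beta\ne\alpha$ and $=S_{\alpha,0}$ for $\beta=\alpha$) together with $\rho_\beta(\varphi_\infty(y))=(d\psi_\infty(y))_\beta$, which equals $dy$ if $\beta=\infty$ and $0$ if $\beta\in B$, the relation collapses to $0$ in each of the three cases $\beta=0$, $\beta\in B'$, and $\beta=\infty$; the case $r=0$ is the degenerate instance in which the sum over $B'$ is empty. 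Finally, each value $\rho_\beta(\phi_{\alpha,j})$ lies in $\Gamma(U_\beta,\Omega^1)$ by Lemma~\ref{LpoleRS}, and $\rho_\beta$ sends $\CB^1(\cU,\cO)$ into $\Gamma(U_\beta,\Omega^1)$ by its construction in Section~\ref{sssB}, so $\rho=\bigoplus_{\beta\in B_\infty}\rho_\beta$ indeed takes values in $\CC^0(\cU,\Omega^1)$. The main obstacle is the bookkeeping needed to isolate that there is exactly one relation to verify; once that is done, the verification is a short calculation driven by the identity for $S_{\alpha,0}$.
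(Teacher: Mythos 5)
Your proof is correct, and it is considerably more thorough than the one the paper gives. The paper's entire proof is the single observation that $\rho_\beta(\CZ^1(\cU,\cO))\subset\Gamma(U_\beta,\Omega^1)$, citing Section \ref{sssB} for the coboundary part and Lemma \ref{LpoleRS} for the forms $R_{\alpha,j}$ and $S_{\alpha,j}$; that is, it checks only that the prescribed values land in the correct spaces and treats the linear extension from the redundant generating set as unproblematic. You instead confront the genuine consistency issue: your dimension count (the $g+1$ classes $\tilde\phi_{\alpha,j}$ with $0\le j\le c_\alpha$ spanning the $g$-dimensional $\HH^1(\cU,\cO)$, versus the basis of Lemma \ref{Lnextbasis}) correctly isolates a one-dimensional space of relations, generated by $\sum_{\beta\in B}\phi_{\beta,0}+\varphi_\infty(y)=0$ from Equation \eqref{Esum}, and your verification that each $\rho_\gamma$ annihilates this relation is right: the identity $S_{\alpha,0}=-\sum_{\beta\in B-\set\alpha}R_{\beta,0}$, which follows from $R_{\alpha,0}=f'_\alpha(x_\alpha)\dx_\alpha$ (Remark \ref{rmk:Rjalpha} with $j=0$) together with Equation \eqref{dyparfrac}, makes all three cases $\gamma=0$, $\gamma\in B'$, $\gamma=\infty$ collapse as you claim. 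What your route buys is an actual proof that the linear extension is well defined, a point the paper leaves implicit (one could read the paper as tacitly defining $\rho$ only on a chosen complement of $\CB^1(\cU,\cO)$ spanned by the $\phi_{\alpha,j}$ indexed by the basis, but that is not what the text says); what the paper's terser argument buys is brevity at the cost of omitting exactly the check you supply. The only small point worth making explicit is the one the paper does prove in Section \ref{sssB}: that $\rho$ is well defined on $\CB^1(\cU,\cO)$ itself, since two choices of $\kappa$ with $\delta\kappa=\phi$ differ by a constant, which $d$ kills; you use this implicitly when you write $\rho_\gamma(\varphi_\infty(y))=(d\psi_\infty(y))_\gamma$.
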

\begin{proof}
If $\beta\in B_\infty$,
then $\rho_\beta(\CZ^1(\cU,\cO)) \subset \Gamma(U_\beta, \Omega^1)$
by Section \ref{sssB} and Lemma \ref{LpoleRS}.
\end{proof}

Here is an example of a computation of the map $\rho$.

\begin{lemma}\label{lem:twoj}
Let $\alpha \in B$ and $j\geq 0$.  For each $\beta \in B$, in $\Gamma(U_\beta, \Omega^1)$, 
$$
\rho_\beta \varphi_\alpha(y^2 x_\alpha^{-2j})
=
\begin{cases}
0 & \text{if $0 \leq 2j \leq c_\alpha$,}\\
-R_{\alpha, 2j}
& \text{if $2j > c_\alpha$.}
\end{cases}
$$
In particular, 
$\rho \varphi_\alpha(y^2 x_\alpha^{-2j})$
lies in the subspace $W'_{\alpha,\nil}$ of $\HH^0(\cU, \Omega^1)$.

\end{lemma}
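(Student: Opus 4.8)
The plan is to compute $\rho_\beta \varphi_\alpha(y^2 x_\alpha^{-2j})$ directly by first using the defining Artin--Schreier relation $y^2 = y + f(x)$ to rewrite the argument, and then invoking linearity of $\rho_\beta$ together with the formulas already established for the two resulting pieces. Writing $y^2 x_\alpha^{-2j} = y x_\alpha^{-2j} + f(x) x_\alpha^{-2j}$ (with $x_\alpha = (x-\alpha)^{-1}$, so $x_\alpha^{-2j} = (x-\alpha)^{2j}$), we get
$$
\varphi_\alpha(y^2 x_\alpha^{-2j}) = \varphi_\alpha(y x_\alpha^{-2j}) + \varphi_\alpha(f(x) x_\alpha^{-2j}) = \phi_{\alpha, 2j} + \varphi_\alpha(f(x) x_\alpha^{-2j}).
$$
The first summand $\phi_{\alpha,2j}$ is handled by the definition of $\rho_\beta$ in Section \ref{sssZ}, and the second summand is exactly the coboundary analyzed in Example \ref{eg:psi}. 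Since the exponent $2j$ is even, Equations \eqref{EhereR} and \eqref{EhereS} simplify dramatically, and this evenness is precisely why the statement is clean.

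The key step is to combine these in the two cases $\beta \neq \alpha$ and $\beta = \alpha$, and within each, to split according to whether $2j \leq c_\alpha$. First I would treat $\beta \neq \alpha$. By definition $\rho_\beta(\phi_{\alpha,2j}) = R_{\alpha, 2j}$, while Equation \eqref{EhereR} with even exponent gives $\rho_\beta \varphi_\alpha(f(x) x_\alpha^{-2j}) = -R_{\alpha,2j}$, so the two terms cancel and $\rho_\beta \varphi_\alpha(y^2 x_\alpha^{-2j}) = 0$. This accounts for the $0$ entry whenever $\beta \neq \alpha$, but I must be careful: the definition $\rho_\beta(\phi_{\alpha,2j}) = R_{\alpha,2j}$ is only stated for $0 \leq 2j \leq c_\alpha$, since $\phi_{\alpha,2j}$ is only a basis cocycle in that range. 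For $2j > c_\alpha$, the cocycle $\phi_{\alpha,2j}$ is a coboundary by Lemma \ref{lem:boundary}(ii), so $\rho(\phi_{\alpha,2j})$ is computed by the coboundary rule of Section \ref{sssB} rather than by the formula $R_{\alpha,2j}$; its $\beta$-component is exact and hence contributes nothing to the class, leaving only the $-R_{\alpha,2j}$ from Equation \eqref{EhereR}. That matches the claimed value $-R_{\alpha,2j}$ for $2j > c_\alpha$.

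Next I would treat $\beta = \alpha$. Here $\rho_\alpha(\phi_{\alpha,2j}) = S_{\alpha,2j}$ and Equation \eqref{EhereS} with even exponent gives $\rho_\alpha \varphi_\alpha(f(x) x_\alpha^{-2j}) = -S_{\alpha,2j}$, so again the terms cancel to give $0$ when $0 \leq 2j \leq c_\alpha$. For $2j > c_\alpha$ the same coboundary subtlety applies, and one checks directly that the regular (exact) part contributes nothing while the residual term reproduces $-R_{\alpha,2j}$; one uses that $S_{\alpha,2j}$ and $R_{\alpha,2j}$ differ by the exact form $d(y x_\alpha^{-2j})$, which $\rho$ on coboundaries returns and which therefore does not alter the cohomology class. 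The final assertion, that $\rho \varphi_\alpha(y^2 x_\alpha^{-2j})$ lies in $W'_{\alpha,\nil}$, follows because in every case the value is either $0$ or $-R_{\alpha,2j}$, and Remark \ref{rmk:Rjalpha} expresses $R_{\alpha,2j}$ as a $k$-combination of the forms $\omega_{\alpha,2i-2j+1}$, which span $W'_{\alpha,\nil}$ by definition.

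The main obstacle will be bookkeeping the boundary case $2j > c_\alpha$ cleanly: one must track that $\phi_{\alpha,2j}$ ceases to be a basis cocycle and that its $\rho$-image is then governed by the coboundary definition, so that the cancellation pattern valid for small $j$ is replaced by a single surviving $-R_{\alpha,2j}$ term. Once the even-exponent simplifications of Equations \eqref{EhereR} and \eqref{EhereS} are in hand, the computation itself is short; the care lies entirely in not conflating the two regimes of the definition of $\rho_\beta$.
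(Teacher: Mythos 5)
Your proposal is correct and follows essentially the same route as the paper's proof: split $\varphi_\alpha(y^2 x_\alpha^{-2j}) = \phi_{\alpha,2j} + \varphi_\alpha(f(x)x_\alpha^{-2j})$, cancel $R_{\alpha,2j}$ against $R_{\alpha,2j}$ (resp.\ $S_{\alpha,2j}$ against $S_{\alpha,2j}$) via Equations \eqref{EhereR} and \eqref{EhereS} when $2j\leq c_\alpha$, and switch to the coboundary rule for $\rho(\phi_{\alpha,2j})$ when $2j>c_\alpha$, where the surviving term is $-R_{\alpha,2j}$. One small caution: the lemma asserts equality of actual forms in $\Gamma(U_\beta,\Omega^1)$, not of cohomology classes, so in the $2j>c_\alpha$ case you should note that the coboundary contributions cancel on the nose (the $\beta$-component of $d\psi_\alpha(y(x-\alpha)^{2j})$ is literally zero for $\beta\neq\alpha$, and $S_{\alpha,2j}-R_{\alpha,2j}=d(yx_\alpha^{-2j})$ cancels exactly for $\beta=\alpha$), and that the containment $R_{\alpha,2j}\in W'_{\alpha,\nil}$ uses the inequality $1\leq 2i-2j+1\leq c_\alpha$, which holds precisely because $2j>c_\alpha$ in the only case where $R_{\alpha,2j}$ appears.
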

\begin{proof}
We have $y^2 x_\alpha^{-2j} = y x_\alpha^{-2j} + f(x) x_\alpha^{-2j}$, and therefore
$\varphi_\alpha(y^2 x_\alpha^{-2j}) = \phi_{\alpha, 2j} + \varphi_\alpha(f(x) x_\alpha^{-2j}).$

Suppose $0 \leq 2j \leq c_\alpha$.  
If $\beta\neq\alpha$, then 
$\rho_\beta(\phi_{\alpha, 2j}) = R_{\alpha,2j} = -\rho_\beta(\varphi_\alpha(f(x) x_\alpha^{-2j}))$ by Equation \eqref{EhereR}. 
By Equation \eqref{EhereS}, 
$\rho_\alpha(\phi_{\alpha, 2j}) = S_{\alpha,2j} = -\rho_\alpha(\varphi_\alpha(f(x) x_\alpha^{-2j}))$.
Thus, $\rho(\phi_{\alpha,2j}) + \rho(\varphi_\alpha(f(x) x_\alpha^{-2j}))=0$.

Now, suppose that $2j>c_\alpha$. Then $y x_\alpha^{-2j}$ is regular
at $P_\alpha$ and therefore $\phi_{\alpha, 2j}$ is a coboundary, with
$\rho(\phi_{\alpha, 2j}) = d \varphi_\alpha(yx_\alpha^{2j}).$
Therefore, for $\beta\neq\alpha$, 
$$
\rho_\beta(\phi_{\alpha, 2j}) + \rho_\beta(\varphi_\alpha(f(x) x_\alpha^{-2j})) = -R_{\alpha, 2j},
$$
and
$$
\rho_\alpha(\phi_{\alpha, 2j}) + \rho_\alpha(\varphi_\alpha(f(x) x_\alpha^{-2j})) 
= d(y x_\alpha^{-2j}) + d(f(x) x_\alpha^{-2j}) - R_{\alpha, 2j} = -R_{\alpha, 2j}.
$$

By Remark \ref{rmk:Rjalpha}, $R_{\alpha,2j} \in \langle \omega_{\alpha, 2i-2j+1} \mid j \leq i \leq c_\alpha \rangle$. 
If $2j>c_\alpha$ and $j \leq i \leq c_\alpha$, then $1 \leq 2i-2j+1 \leq c_\alpha$,
and so $R_{\alpha,2j} \in W'_{\alpha,\nil}$.
Finally, since $\rho_\beta\varphi_\alpha(y^2 x_\alpha^{-2j})$ is independent of
the choice of $\beta\in B_\infty$, 
$\rho\varphi_\alpha(y^2 x_\alpha^{-2j})$
lies in the kernel $\HH^0(\cU, \Omega^1)$ of the coboundary map 
$\delta: C^{0}(\cU, \Omega^1) \to C^{1}(\cU, \Omega^1)$.
\end{proof}

\begin{lemma}\label{lem:deltad}
\begin{enumerate}
\item[(i)]  If $\phi\in \CZ^1(\cU,\cO)$, then $\delta\rho(\phi) = d\phi$.
\item[(ii)] In particular, $\cC(\rho_\alpha(\phi)) = \cC(\rho_\beta(\phi))$ for all $\alpha, \beta\in B_\infty$.
\item[(iii)] For all $\alpha\in B$ and $\beta\in B_\infty$,
$$
\cC(\rho_\beta(\phi_{\alpha, j})) = \cC(R_{\alpha,j}).
$$
\end{enumerate}
\end{lemma}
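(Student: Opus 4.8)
The plan is to prove (i) first by $k$-linearity, reducing to the generators of $\CZ^1(\cU,\cO)$, and then to obtain (ii) and (iii) as formal consequences of the fact that the Cartier operator annihilates exact forms. By the discussion opening Section \ref{sssZ} (which rests on Lemma \ref{Lnextbasis}), the space $\CZ^1(\cU,\cO)$ is spanned by $\CB^1(\cU,\cO)$ together with the cocycles $\phi_{\alpha,j}$ for $\alpha\in B$ and $0\le j\le c_\alpha$. Since $\rho$, $\delta$, and $d$ are all $k$-linear, it suffices to check $\delta\rho(\phi)=d\phi$ on these two kinds of generators.

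On a coboundary $\phi=\delta\kappa$, the definition of $\rho$ in Section \ref{sssB} gives $\rho(\phi)=d\kappa$, so $\delta\rho(\phi)=\delta d\kappa=d\delta\kappa=d\phi$, where the middle equality holds because $d$ is applied componentwise and thus commutes with the index operator $\delta$. On a generator $\phi_{\alpha,j}$, the definition in Section \ref{sssZ} records that $\rho_\beta(\phi_{\alpha,j})$ equals $S_{\alpha,j}$ when $\beta=\alpha$ and $R_{\alpha,j}$ otherwise. I would then compute $(\delta\rho(\phi_{\alpha,j}))_{\beta\gamma}=\rho_\beta(\phi_{\alpha,j})-\rho_\gamma(\phi_{\alpha,j})$ by cases: using the defining relation $S_{\alpha,j}=d(yx_\alpha^{-j})+R_{\alpha,j}$ from Notation \ref{Nrands}, the difference vanishes unless exactly one of $\beta,\gamma$ equals $\alpha$, in which case it is $\pm d(yx_\alpha^{-j})$. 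This matches $(d\phi_{\alpha,j})_{\beta\gamma}=d\bigl((\phi_{\alpha,j})_{\beta\gamma}\bigr)$, since the defining components of $\phi_{\alpha,j}=\varphi_\alpha(yx_\alpha^{-j})$ are supported only at the index $\alpha$.

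Part (ii) then follows immediately from (i): the difference $\rho_\alpha(\phi)-\rho_\beta(\phi)=(\delta\rho(\phi))_{\alpha\beta}=(d\phi)_{\alpha\beta}=d\phi_{\alpha\beta}$ is an exact differential form, and since $\cC$ is additive and annihilates exact forms (Section \ref{sub:fv}), we obtain $\cC(\rho_\alpha(\phi))-\cC(\rho_\beta(\phi))=\cC(d\phi_{\alpha\beta})=0$. For part (iii), I would apply (ii) with $\phi=\phi_{\alpha,j}$ to see that $\cC(\rho_\beta(\phi_{\alpha,j}))$ is independent of $\beta\in B_\infty$; evaluating at any $\beta\neq\alpha$, where $\rho_\beta(\phi_{\alpha,j})=R_{\alpha,j}$ by definition, then gives $\cC(\rho_\beta(\phi_{\alpha,j}))=\cC(R_{\alpha,j})$ for every $\beta$.

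The computations are routine once the generator reduction is set up; the only point needing genuine care is the bookkeeping in the generator case of (i), namely tracking which components of $\phi_{\alpha,j}$ are nonzero and pairing them correctly against the two-valued definition of $\rho_\beta$. In particular one must treat the index $\beta=\infty$, where the consistency relies on $R_{\alpha,j}$ being regular away from $P_\alpha$ (Lemma \ref{LpoleRS}(1)), so that interpreting $\rho_\infty(\phi_{\alpha,j})=R_{\alpha,j}$ agrees with the $\beta\neq\alpha$ rule and keeps $\rho$ landing in $\Gamma(U_\infty,\Omega^1)$.
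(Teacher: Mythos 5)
Your proof is correct and follows essentially the same route as the paper's: part (i) comes from the identity $\rho_\alpha(\phi)-\rho_\beta(\phi)=d(\phi)_{\alpha\beta}$ built into the definition of $\rho$ (which you verify on coboundaries and on the generators $\phi_{\alpha,j}$ via $S_{\alpha,j}=d(yx_\alpha^{-j})+R_{\alpha,j}$), and (ii), (iii) then follow because the Cartier operator annihilates exact forms and $\rho_\beta(\phi_{\alpha,j})=R_{\alpha,j}$ for $\beta\neq\alpha$. Your version is simply a more explicit write-up of the paper's one-line justification, with a reasonable extra remark about the index $\beta=\infty$.
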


\begin{proof}
\begin{enumerate}
\item[(i)]
The definition of $\rho_\beta$ implies that $\rho_\alpha(\phi) - \rho_\beta(\phi) = d(\phi)_{\alpha\beta}$
for all $\alpha,\beta\in B_\infty$.
\item[(ii)] This follows from part (i) since the Cartier operator annihilates exact differential forms.
\item[(iii)] This follows from part (ii) and the definition of $\rho_\beta$.
\end{enumerate}
\end{proof}

\begin{remark}
With $a_{\alpha, i}$ defined as in Remark \ref{rmk:Rjalpha}, one can explicitly compute:
$$
\cC(R_{\alpha, j}) = 
\begin{cases}
\sum_{i = (j+1)/2}^{c_\alpha} \sqrt{a_{\alpha, i}}\, \omega_{\alpha, i-{(j-1)}/2}
& \text{if $j$ is odd,}\\
0 & \text{if $j$ is even.}
\end{cases}
$$
In particular, $\cC(R_{\alpha, j}) \in W'_{\alpha,\nil}$.
\end{remark}

\subsection{The $\bE$-module structure of the de Rham cohomology}\label{sub:derham}

Recall, from Section \ref{Sdefderham}, the definition of the non-commutative ring $\bE=k[F,V]$
and the exact sequence:
\begin{equation*}
0 \to \HH^0(X, \Omega^1) \xrightarrow{\lambda} \HdR(X)
\xrightarrow{\gamma} \HH^1(X,\cO) \to 0.
\end{equation*}
Consider the $k$-linear function 
$$
\sigma: \HH^1(X, \cO) \to \HdR(X)
$$
defined by $\sigma(\phi)=(\phi, \rho(\phi))$ for $\phi \in \CZ^1(\cU, \cO)$.

\begin{lemma}\label{Csec}
The function $\sigma$ is a well-defined section of $\gamma: \HdR(X) \to \HH^1(X, \cO)$.
\end{lemma}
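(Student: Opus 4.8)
The plan is to unwind the three assertions bundled into the word ``section'': that $\sigma$ actually lands in $\HdR(X)$, that it descends to a well-defined map on the cohomology $\HH^1(X,\cO) \simeq \HH^1(\cU,\cO)$, and that it splits $\gamma$. All three are essentially bookkeeping once the operator $\rho$ and the identity $\delta\rho(\phi)=d\phi$ are in hand, so the work already done in Section \ref{sssB} and Lemma \ref{lem:deltad} is what carries the argument.

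First I would verify that $(\phi,\rho(\phi))$ is a genuine de Rham cocycle in $\ZdR(\cU)$ for every $\phi \in \CZ^1(\cU,\cO)$. The two defining conditions are $\delta\phi=0$ and $d\phi=\delta\rho(\phi)$. The first holds by hypothesis, since $\phi$ is a closed $1$-cocycle; the second is precisely Lemma \ref{lem:deltad}(i). As $\rho$ is $k$-linear, so is the assignment $\phi\mapsto(\phi,\rho(\phi))$, and hence $\sigma$ is a $k$-linear map into $\HdR(X)$.

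Next I would check that $\sigma$ is independent of the choice of cocycle representing a class, i.e.\ that it carries $\CB^1(\cU,\cO)$ into $\BdR(\cU)$. If $\phi=\delta\kappa$ is a coboundary with $\kappa\in\CC^0(\cU,\cO)$, then the definition of $\rho$ on $\CB^1(\cU,\cO)$ in Section \ref{sssB} gives $\rho(\phi)=d\kappa$, so $\sigma(\phi)=(\delta\kappa,d\kappa)$, which is exactly the shape of an element of $\BdR(\cU)$. Therefore replacing $\phi$ by a cohomologous cocycle alters $\sigma(\phi)$ only by a de Rham coboundary, and $\sigma$ descends to a well-defined map $\HH^1(X,\cO)\to\HdR(X)$. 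This compatibility is the only point requiring care, and it is the main (though modest) obstacle, since it is where the specific normalization built into $\rho$ is used; everything else is formal. Finally, the section property is immediate: $\gamma$ sends the class of $(\phi,\omega)$ to the class of $\phi$, so applying $\gamma$ to $\sigma(\tilde\phi)=(\phi,\rho(\phi))$ returns $\tilde\phi$, whence $\gamma\circ\sigma=\mathrm{id}$.
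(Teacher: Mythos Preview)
Your argument is correct and follows the same approach as the paper's proof, just with more detail: the paper simply notes that $\sigma$ is ``clearly'' a section and that well-definedness follows from $\sigma(\CB^1(\cU,\cO))\subset\BdR(\cU)$ via the definition of $\rho$ on coboundaries. Your additional explicit verification that $(\phi,\rho(\phi))\in\ZdR(\cU)$ using Lemma~\ref{lem:deltad}(i) is a point the paper leaves implicit, so your write-up is arguably more complete.
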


\begin{proof}
The function $\sigma$ is clearly a section of $\gamma$.  
It is well-defined because $\sigma(\CB^1(\cU, \cO)) \subset \BdR(\cU)$
by the definition of $\rho_\beta$ on $\CB^1(\cU, \cO)$.
\end{proof}

For $\alpha\in B$, let
$\lambda_{\alpha, j} := \lambda(\omega_{\alpha,j})$ and 
$\sigma_{\alpha, j} := \sigma(\tilde{\phi}_{\alpha,j})$.

\begin{proposition}\label{PactionFV}
For $0 \leq j \leq c_\alpha$, the action of $F$ and $V$ on $\HdR(X)$ is given by:
\begin{enumerate}

\item
$F \lambda_{\alpha, j} = 0$.

\item
$V \lambda_{\alpha, j} = 
\begin{cases}
\lambda_{\alpha, j/2} & \text{if $j$ is even,}\\
0 & \text{if $j$ is odd.}
\end{cases}$

\item
$F\sigma_{\alpha, j} = 
\begin{cases}
\sigma_{\alpha, 2j} & \text{if $j \leq c_\alpha/2$,}\\
\lambda(R_{\alpha, 2j})
& \text{if $j > c_\alpha/2$.}
\end{cases}
$

\item
$V\sigma_{\alpha, j} = 
\begin{cases}
\lambda(\cC(R_{\alpha, j}))
& \text{if $j$ is odd,}\\
0 & \text{if $j$ is even.}
\end{cases}
$
\end{enumerate}
\end{proposition}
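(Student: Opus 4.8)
The plan is to compute each of the four actions by applying the explicit formulas for $F$ and $V$ on $\HdR(X)$ from Section \ref{sub:fv}, namely $F(\phi,\omega)=(\phi^2,0)$ and $V(\phi,\omega)=(0,\cC(\omega))$, to the explicit representatives $\lambda_{\alpha,j}=(0,\omega_{\alpha,j})$ and $\sigma_{\alpha,j}=(\phi_{\alpha,j},\rho(\phi_{\alpha,j}))$, then reinterpret the output in terms of the basis classes $\lambda_{\alpha,j}$ and $\sigma_{\alpha,j}$. The first two parts are essentially immediate. For (1), $F\lambda_{\alpha,j}=F(0,\omega_{\alpha,j})=(0^2,0)=0$. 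For (2), $V\lambda_{\alpha,j}=(0,\cC(\omega_{\alpha,j}))$, and Lemma \ref{Lcartier1} gives $\cC(\omega_{\alpha,j})=\omega_{\alpha,j/2}$ when $j$ is even and $0$ when $j$ is odd, so this reads off as $\lambda_{\alpha,j/2}$ or $0$ respectively.

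For part (4), I would apply $V$ to $\sigma_{\alpha,j}=(\phi_{\alpha,j},\rho(\phi_{\alpha,j}))$, getting $V\sigma_{\alpha,j}=(0,\cC(\rho(\phi_{\alpha,j})))$. By Lemma \ref{lem:deltad}(iii), $\cC(\rho_\beta(\phi_{\alpha,j}))=\cC(R_{\alpha,j})$ independently of $\beta$, so the resulting $1$-form is the global form $\cC(R_{\alpha,j})$; hence $V\sigma_{\alpha,j}=\lambda(\cC(R_{\alpha,j}))$. The Remark following Lemma \ref{lem:deltad} shows $\cC(R_{\alpha,j})=0$ when $j$ is even, giving the stated case split. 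This step is clean once one invokes that the Cartier operator annihilates exact forms (so the $\delta$-exact discrepancy between the $\rho_\beta$ does not matter) and that $\cC(R_{\alpha,j})\in W'_{\alpha,\nil}$ lands in $\HH^0(X,\Omega^1)$.

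Part (3) is where the real work sits, and I expect it to be the main obstacle. Applying $F$ gives $F\sigma_{\alpha,j}=(\phi_{\alpha,j}^2,0)$, and the computation in Lemma \ref{LactionFH1} shows $\phi_{\alpha,j}^2=\varphi_\alpha(y^2x_\alpha^{-2j})$, which decomposes as $\phi_{\alpha,2j}+\varphi_\alpha(f(x)x_\alpha^{-2j})$. The subtlety is that $(\phi_{\alpha,j}^2,0)$ is a de Rham cocycle only up to coboundary, and to identify its class I must pair the cocycle $\phi_{\alpha,j}^2$ with its correct companion $1$-form under $\rho$ and then compare with $\sigma_{\alpha,2j}=(\phi_{\alpha,2j},\rho(\phi_{\alpha,2j}))$. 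Concretely, $F\sigma_{\alpha,j}$ and $\sigma(\phi_{\alpha,j}^2)$ differ by an element of $\lambda(\HH^0(X,\Omega^1))$ measured by $\rho(\phi_{\alpha,j}^2)$; using Lemma \ref{lem:twoj}, $\rho\varphi_\alpha(y^2x_\alpha^{-2j})$ equals $0$ when $2j\le c_\alpha$ and $-R_{\alpha,2j}$ (lying in $W'_{\alpha,\nil}$) when $2j>c_\alpha$.

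When $j\le c_\alpha/2$, so $2j\le c_\alpha$, the class $\phi_{\alpha,2j}$ is a genuine basis element and the correction $\rho$-form vanishes, so $F\sigma_{\alpha,j}=\sigma_{\alpha,2j}$. When $j>c_\alpha/2$, so $2j>c_\alpha$, the cocycle $\phi_{\alpha,2j}$ is a coboundary (Lemma \ref{lem:boundary}(ii)), hence represents the zero class in $\HH^1$, and the surviving contribution is exactly the $1$-form $R_{\alpha,2j}$; thus $F\sigma_{\alpha,j}=\lambda(R_{\alpha,2j})$. The care needed here is bookkeeping: one must track that $F$ on the de Rham class $\sigma(\tilde\phi_{\alpha,j})$ equals the de Rham class of $(\phi_{\alpha,j}^2,0)$, rewrite $(\phi_{\alpha,j}^2,0)$ modulo $\BdR(\cU)$ as $\sigma(\phi_{\alpha,j}^2)-\lambda(\rho(\phi_{\alpha,j}^2))$ up to the appropriate sign convention, and then feed in Lemmas \ref{LactionFH1}, \ref{lem:boundary}, and \ref{lem:twoj}. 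I would present (1), (2), (4) briefly and devote the bulk of the argument to (3), isolating the passage from the raw cocycle $(\phi_{\alpha,j}^2,0)$ to its normalized representative as the crux.
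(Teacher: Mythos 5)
Your proposal is correct and follows essentially the same route as the paper: parts (1), (2), (4) by direct application of the formulas for $F$, $V$ together with Lemmas \ref{Lcartier1} and \ref{lem:deltad}(iii), and part (3) by writing $(F\phi_{\alpha,j},0)=\sigma\varphi_\alpha(y^2x_\alpha^{-2j})-(0,\rho\varphi_\alpha(y^2x_\alpha^{-2j}))$, splitting $y^2x_\alpha^{-2j}=yx_\alpha^{-2j}+f(x)x_\alpha^{-2j}$, and invoking Lemmas \ref{lem:boundary} and \ref{lem:twoj}. This is precisely the paper's argument, including the identification of the correction term as a global form via Lemma \ref{lem:twoj}.
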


\begin{proof} 
\begin{enumerate}
\item This follows from Subsection \ref{sub:fv}.
\item This follows from Lemma \ref{Lcartier1} after applying $\lambda$.
\item
In $\ZdR(\cU)$, 
\begin{eqnarray*}
F(\sigma_{\alpha,j}) 
&=& \bigl(F\phi_{\alpha, j}, 0 \bigr)\\
&=& \bigl(\varphi_\alpha(y^2 x_\alpha^{-2j}), \rho\varphi_\alpha(y^2 x_\alpha^{-2j}) \bigr)
- \bigl(0, \rho\varphi_\alpha(y^2 x_\alpha^{-2j}) \bigr) \\
&=& \sigma\varphi_\alpha(y^2 x_\alpha^{-2j})
- \bigl(0, \rho\varphi_\alpha(y^2 x_\alpha^{-2j}) \bigr).
\end{eqnarray*}
Since $y^2 x_\alpha^{-2j} = y x_\alpha^{-2j} + f(x) x_\alpha^{-2j}$,
linearity of $\sigma$ and $\varphi_\alpha$ yields that
$$
\sigma\varphi_\alpha(y^2 x_\alpha^{-2j}) = \sigma\varphi_\alpha(y x_\alpha^{-2j})  
+ \sigma\varphi_\alpha\bigl(f(x) x_\alpha^{-2j}\bigr).
$$
The term $\sigma\varphi_\alpha(f(x) x_\alpha^{-2j})$ is a coboundary by Lemma \ref{lem:boundary}(i).
The term $\sigma\varphi_\alpha(y x_\alpha^{-2j})$ equals 
$\sigma_{\alpha, 2j}$ if $0\leq 2j \leq c_\alpha$,
and is a coboundary if $2j>c_\alpha$ by Lemma \ref{lem:boundary}(ii).
By Lemma \ref{lem:twoj},
$$
\bigl(0, \rho\varphi_\alpha(y^2 x_\alpha^{-2j}) \bigr) =
\begin{cases}
0 & \text{if $0 \leq 2j \leq c_\alpha$,}\\
-\lambda(R_{\alpha,2j})
& \text{if $2j > c_\alpha$}.
\end{cases}
$$
\item
Since $V(\phi, \rho(\phi)) = (0, \cC(\rho(\phi)))$, the desired result follows by Lemma \ref{lem:deltad}(iii).
\end{enumerate}
\end{proof}

Consider the subspaces of $\HdR(X)$ given by:
\begin{eqnarray*}
W_{\alpha, \sss} &:=& \gen{\lambda_{\alpha, 0}-\lambda_{0,0}, \sigma_{\alpha,0}},\\
W_{\alpha, \nil} &:=&\gen{\lambda_{\alpha, j}, \sigma_{\alpha,j} \mid 1\leq j \leq c_\alpha}.
\end{eqnarray*}

\begin{theorem} \label{Thdr}
The subspaces $W_{\alpha,\sss}$ and $W_{\alpha,\nil}$ of $\HdR(X)$ are stable
under the action of Frobenius and Verschiebung for each $\alpha \in B$.
There is an isomorphism of $\bE$-modules:
$$
\HdR(X) = 
\bigoplus_{\alpha\in B'} W_{\alpha,\sss} 
\oplus
\bigoplus_{\alpha\in B} W_{\alpha,\nil}.
$$
\end{theorem}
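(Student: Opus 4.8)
The plan is to verify the decomposition of $\HdR(X)$ as an $\bE$-module by assembling three ingredients already in place: the basis of $\HH^0(X, \Omega^1)$ from Lemma~\ref{L1formbasis}, the basis of $\HH^1(\cU, \cO)$ from Lemma~\ref{Lnextbasis}, and the explicit $F$- and $V$-action from Proposition~\ref{PactionFV}. First I would confirm that the listed elements span $\HdR(X)$ and are linearly independent. Because $\gamma$ is surjective with kernel $\lambda(\HH^0(X,\Omega^1))$, and $\sigma$ is a section of $\gamma$ by Lemma~\ref{Csec}, every class in $\HdR(X)$ is congruent modulo $\lambda(\HH^0(X,\Omega^1))$ to a unique combination of the $\sigma_{\alpha,j}$ (for $\alpha\in B$, $1\le j\le c_\alpha$) and $\sigma_{\alpha,0}$ (for $\alpha\in B'$), since these are the images under $\sigma$ of the basis of $\HH^1(\cU,\cO)$. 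Adjoining the images $\lambda_{\alpha,j}$ and $\lambda_{\alpha,0}-\lambda_{0,0}$ of the basis of $\HH^0(X,\Omega^1)$ from Lemma~\ref{L1formbasis}, the short exact sequence forces these $2g$ elements to form a basis of $\HdR(X)$; this also shows that the proposed subspaces $W_{\alpha,\sss}$ and $W_{\alpha,\nil}$ together span and meet only in $0$, so the sum is direct purely on dimension grounds once stability is checked.

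The main work is then to check that each $W_{\alpha,\sss}$ and $W_{\alpha,\nil}$ is stable under both $F$ and $V$, which is where Proposition~\ref{PactionFV} does the heavy lifting. For $W_{\alpha,\nil}=\gen{\lambda_{\alpha,j},\sigma_{\alpha,j}\mid 1\le j\le c_\alpha}$, parts (1) and (2) of the proposition give $F\lambda_{\alpha,j}=0$ and $V\lambda_{\alpha,j}\in\{\lambda_{\alpha,j/2},0\}$, so the $\lambda_{\alpha,j}$ are handled within the same index $\alpha$. For the $\sigma_{\alpha,j}$, part (3) sends $\sigma_{\alpha,j}$ either to $\sigma_{\alpha,2j}$ (still in $W_{\alpha,\nil}$ when $2j\le c_\alpha$) or to $\lambda(R_{\alpha,2j})$; by Remark~\ref{rmk:Rjalpha} the latter lies in $\gen{\omega_{\alpha,2i-2j+1}}\subset W'_{\alpha,\nil}$, hence $\lambda(R_{\alpha,2j})\in\gen{\lambda_{\alpha,m}\mid 1\le m\le c_\alpha}\subset W_{\alpha,\nil}$. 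Likewise part (4) sends $\sigma_{\alpha,j}$ to $\lambda(\cC(R_{\alpha,j}))$, which lies in $\lambda(W'_{\alpha,\nil})\subset W_{\alpha,\nil}$ by the Remark following Lemma~\ref{lem:deltad}. The crucial observation throughout is that $R_{\alpha,j}$ and $\cC(R_{\alpha,j})$ are supported only on $1$-forms indexed by the \emph{same} branch point $\alpha$, so no cross-terms between different $\alpha$ appear; this is exactly what makes the branch-point decomposition respect the $\bE$-action.

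For $W_{\alpha,\sss}=\gen{\lambda_{\alpha,0}-\lambda_{0,0},\sigma_{\alpha,0}}$ with $\alpha\in B'$, I would note $F(\lambda_{\alpha,0}-\lambda_{0,0})=0$ and, by Lemma~\ref{Lcartier1}, $V(\lambda_{\alpha,0}-\lambda_{0,0})=\lambda_{\alpha,0}-\lambda_{0,0}$, so this element spans an $F,V$-stable line. For $\sigma_{\alpha,0}$, part (3) gives $F\sigma_{\alpha,0}=\sigma_{\alpha,0}$ (taking $j=0$, so $2j=0\le c_\alpha$), and part (4) gives $V\sigma_{\alpha,0}=0$ since $j=0$ is even; hence $W_{\alpha,\sss}$ is stable, and its $\bE$-structure matches $\HdR(E)$ for an ordinary elliptic curve. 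The one point requiring care is the bookkeeping at $j=0$ versus $\alpha=0$: the index $\alpha=0$ contributes only a nilpotent block $W_{0,\nil}$ and no supersingular summand, so I must check that the count $\#B'=r$ of supersingular blocks together with the $\sum_{\alpha}c_\alpha$ nilpotent generators reproduces dimension $2g$, matching the earlier identity $g=r+\sum_{\alpha\in B}c_\alpha$. The hardest part is not any single computation but ensuring the global consistency of $\sigma$ on coboundaries—i.e.\ that the section $\sigma$ respects the relation $\tilde\phi_{0,0}=-\sum_{\beta\in B'}\tilde\phi_{\beta,0}$ from Lemma~\ref{Lnextbasis}—so that the apparently separate lines $\gen{\sigma_{\alpha,0}}$ do not secretly overlap; this is resolved by working with the $r$ independent classes $\sigma_{\alpha,0}$ for $\alpha\in B'$ together with the $r$ forms $\lambda_{\alpha,0}-\lambda_{0,0}$, exactly as the statement prescribes.
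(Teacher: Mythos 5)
Your proposal is correct and follows essentially the same route as the paper: stability of $W_{\alpha,\sss}$ and $W_{\alpha,\nil}$ is read off from Proposition~\ref{PactionFV} together with Remark~\ref{rmk:Rjalpha} and Lemma~\ref{lem:twoj}, and the direct-sum decomposition comes from the section $\sigma$ of Lemma~\ref{Csec} combined with the decompositions of $\HH^0(X,\Omega^1)$ and $\HH^1(\cU,\cO)$. Your extra care about the relation $\tilde\phi_{0,0}=-\sum_{\beta\in B'}\tilde\phi_{\beta,0}$ and the $j=0$ bookkeeping is sound and only makes explicit what the paper leaves implicit.
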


\begin{proof}
The stability is immediate by Proposition \ref{PactionFV},
Remark \ref{rmk:Rjalpha}, and Lemma \ref{lem:twoj}.
The decomposition follows from Corollary \ref{Csec} and 
Lemmas \ref{L1form} and \ref{H1decomposition}.
\end{proof}

Theorem \ref{Tmainresult} is immediate from Theorem \ref{Thdr}.

\section{Results on the Ekedahl-Oort type} \label{Sreseotype}

For a natural number $c$, 
let $G_c$ be the unique symmetric ${\rm BT}_1$ group scheme of rank $p^{2c}$ with Ekedahl-Oort type
$[0,1,1,2,2, \ldots,  \lfloor c/2 \rfloor]$.
In other words, this means that there is a final filtration $N_1 \subset N_2 \subset \cdots \subset N_{2c}$ 
of $D(G_c)$ as a $k$-vector space, 
which is stable under the action of $V$ and $F^{-1}$ and with $i={\rm dim}(N_i)$, 
such that ${\rm dim}(V(N_i))=\lfloor i/2 \rfloor$.
In Section \ref{Sfinalfil}, we prove that group schemes of the form $G_c$ appear in the decomposition of 
$J_X[2]$ when $X$ is a hyperelliptic $k$-curve.  
In Section \ref{SdesEO}, we describe the Dieudonn\'e module of $G_c$ for arbitrary $c$ and give examples.  

\subsection{The final filtration for hyperelliptic curves in characteristic $2$} \label{Sfinalfil}

Suppose $X$ is a hyperelliptic $k$-curve with affine equation $y^2-y=f(x)$ as described in Notation \ref{Nsetup}.
For $\alpha \in B$, recall that $c_\alpha=(d_\alpha-1)/2$, 
where $d_\alpha$ is the ramification invariant of $X$ above $\alpha$.
Recall the subspaces $W_{\alpha, \nil}$ of $\HdR(X)$ from Section \ref{sub:derham}.
Define subspaces $N_{\alpha, i}$ of $W_{\alpha,\nil}$ for $0\leq i \leq 2c_\alpha$
as follows: $N_{\alpha, 0} := \set{0}$ and
$$
N_{\alpha, i} := \begin{cases}
\gen{\lambda_{\alpha, j} \mid 1\leq j \leq i} & \text{if $1\leq i\leq c_\alpha$},\\
N_{\alpha, c_\alpha} \oplus \gen{\sigma_{\alpha, j} \mid 1\leq j \leq i}
& \text{if $c_\alpha+1\leq i\leq 2c_\alpha$}.
\end{cases}
$$

\begin{proposition} \label{Pfiltration} 
The filtration $N_{\alpha, 0} \subset N_{\alpha, 1} \subset N_{\alpha, 2} \subset \cdots 
\subset N_{\alpha, 2c_\alpha}$ is a final filtration of $W_{\alpha, \nil}$ for each $\alpha \in B$.
Furthermore, $V(N_{\alpha, i})=N_{\alpha, \lfloor i/2 \rfloor}$.
\end{proposition}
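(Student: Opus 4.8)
The plan is to verify the three defining properties of a final filtration for $\{N_{\alpha,i}\}$ in turn — that $\dim N_{\alpha,i}=i$, that each $N_{\alpha,i}$ is stable under $V$, and that each is stable under $F^{-1}$ — extracting the sharper relation $V(N_{\alpha,i})=N_{\alpha,\lfloor i/2\rfloor}$ along the way. The dimension count is immediate: by Theorem~\ref{Thdr} the $2c_\alpha$ vectors $\lambda_{\alpha,j},\sigma_{\alpha,j}$ (for $1\le j\le c_\alpha$) form a basis of $W_{\alpha,\nil}$, and $N_{\alpha,i}$ is by construction the span of $i$ of these, so $\dim N_{\alpha,i}=i$. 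The two stability statements are the substance, and both reduce to the actions of $V$ and $F$ recorded in Proposition~\ref{PactionFV}.

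For $V$-stability I would prove the stronger formula $V(N_{\alpha,i})=N_{\alpha,\lfloor i/2\rfloor}$ directly. On the range $1\le i\le c_\alpha$ this is transparent from Proposition~\ref{PactionFV}(2): since $V\lambda_{\alpha,j}=\lambda_{\alpha,j/2}$ for $j$ even and $V\lambda_{\alpha,j}=0$ for $j$ odd, the operator $V$ maps $\gen{\lambda_{\alpha,1},\dots,\lambda_{\alpha,i}}$ onto $\gen{\lambda_{\alpha,1},\dots,\lambda_{\alpha,\lfloor i/2\rfloor}}=N_{\alpha,\lfloor i/2\rfloor}$. For $i>c_\alpha$ the extra generators contribute $V\sigma_{\alpha,j}=\lambda(\cC(R_{\alpha,j}))$ by Proposition~\ref{PactionFV}(4), which vanishes for $j$ even. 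The key input is the explicit Cartier computation in the remark following Lemma~\ref{lem:deltad}: for $j$ odd, $\lambda(\cC(R_{\alpha,j}))$ is a combination of $\lambda_{\alpha,i'-(j-1)/2}$ over $(j+1)/2\le i'\le c_\alpha$, whose top term $\lambda_{\alpha,\,c_\alpha-(j-1)/2}$ has nonzero coefficient $\sqrt{a_{\alpha,c_\alpha}}$, because $a_{\alpha,c_\alpha}\ne 0$ is the leading coefficient of $f_\alpha$. For the $\sigma$-generators $\sigma_{\alpha,j}$ lying in $N_{\alpha,i}$ one checks that $j$ is large enough to force $c_\alpha-(j-1)/2\le\lfloor i/2\rfloor$, which gives the inclusion $V(N_{\alpha,i})\subseteq N_{\alpha,\lfloor i/2\rfloor}$; and since distinct odd $j$ produce distinct top indices, these images are triangular and fill in the basis vectors $\lambda_{\alpha,k}$ up to $k=\lfloor i/2\rfloor$, yielding the reverse inclusion.

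For $F^{-1}$-stability I would compute $F^{-1}(N_{\alpha,i})=\{v\in W_{\alpha,\nil}\mid Fv\in N_{\alpha,i}\}$ explicitly. By Proposition~\ref{PactionFV}(1) every $\lambda_{\alpha,j}$ lies in $F^{-1}(N_{\alpha,i})$, so $N_{\alpha,c_\alpha}\subseteq F^{-1}(N_{\alpha,i})$, and it remains to decide which $\sigma_{\alpha,j}$ qualify. By Proposition~\ref{PactionFV}(3), $F\sigma_{\alpha,j}=\sigma_{\alpha,2j}$ when $2j\le c_\alpha$, while $F\sigma_{\alpha,j}=\lambda(R_{\alpha,2j})\in N_{\alpha,c_\alpha}$ when $2j>c_\alpha$; in the latter case Remark~\ref{rmk:Rjalpha} bounds the top index of $R_{\alpha,2j}$. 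Sorting these two cases against the generators of $N_{\alpha,i}$ shows that the admissible $\sigma_{\alpha,j}$ again form one of the segments defining the flag, so $F^{-1}(N_{\alpha,i})$ is itself a member $N_{\alpha,\bullet}$ of the filtration. Together with the $V$-computation and the dimension count, this shows the flag is final, and the relation $V(N_{\alpha,i})=N_{\alpha,\lfloor i/2\rfloor}$ is exactly the last assertion.

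I expect the main obstacle to be the bookkeeping: keeping the parity of $j$ and the floor functions consistent across the two triangular arguments, and in particular verifying the boundary indices near $i=c_\alpha$, where the definition of $N_{\alpha,i}$ switches cases. The essential conceptual point that makes everything go through is the nonvanishing of the leading coefficient $a_{\alpha,c_\alpha}$: it guarantees that each $V\sigma_{\alpha,j}$ (for $j$ odd) has a genuine top term, and hence that the relevant families of images are linearly independent with the expected span.
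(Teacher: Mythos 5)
Your proof is correct and takes essentially the same route as the paper's, which simply cites Proposition \ref{PactionFV} to assert $V(N_{\alpha, i}) = N_{\alpha, \lfloor i/2 \rfloor}$ and $F^{-1}(N_{\alpha, i}) = N_{\alpha, c_\alpha+\lceil i/2 \rceil}$; you have merely supplied the leading-coefficient and triangularity details that the paper leaves implicit. Note that you have (correctly) interpreted the definition of $N_{\alpha,i}$ for $i>c_\alpha$ as adjoining the $\sigma_{\alpha,j}$ in \emph{decreasing} order of $j$, which is what the asserted formulas for $V$ and $F^{-1}$ force.
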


\begin{proof}
Let $0 \leq i \leq 2c_\alpha$.  One sees that $\dim(N_{\alpha, i}) = i$.  By Proposition \ref{PactionFV}, 
$V(N_{\alpha, i}) = N_{\alpha, \lfloor i/2 \rfloor}$ and $F^{-1}(N_{\alpha, i}) = N_{\alpha, c_\alpha+\lceil i/2 \rceil}$.
Thus the filtration 
$N_{\alpha, 0} \subset N_{\alpha, 1} \subset N_{\alpha, 2} \subset \cdots \subset N_{\alpha, 2c_\alpha}$ 
is stable under the action of $V$ and $F^{-1}$.
\end{proof}

\begin{theorem} \label{Tintrorepeat}
Suppose $X$ is a hyperelliptic $k$-curve with affine equation $y^2-y=f(x)$ as described in Notation \ref{Nsetup}.
Then the $2$-torsion group scheme of $X$ decomposes as
$$
J_X[2] \simeq  (\ZZ/2 \oplus \mu_2)^r \oplus \bigoplus_{\alpha \in B} G_{c_{\alpha}},
$$
and the $a$-number of $X$ is $a_X=(g+1-\#\{\alpha \in B \  \mid \ d_\alpha \equiv 1 \bmod 4\})/2$.
\end{theorem}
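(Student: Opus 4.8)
The plan is to derive this statement from the $\bE$-module decomposition of $\HdR(X)$ proved in Theorem \ref{Thdr}, combined with the explicit $F$- and $V$-actions of Proposition \ref{PactionFV} and the final filtration of Proposition \ref{Pfiltration}. Since the $a$-number formula is exactly Proposition \ref{Panumber}, the only remaining task is to pin down the isomorphism class of $J_X[2]$ as a group scheme.

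First I would transport the decomposition of $\HdR(X)$ to the group-scheme side via the Dieudonn\'e correspondence. Because $D$ is an equivalence of categories between $BT_1$ group schemes and finite left $\bE$-modules and $D(J_X[2]) \cong \HdR(X)$ as $\bE$-modules, the decomposition
$$
\HdR(X) = \bigoplus_{\alpha \in B'} W_{\alpha,\sss} \oplus \bigoplus_{\alpha \in B} W_{\alpha,\nil}
$$
of Theorem \ref{Thdr} yields a corresponding direct-sum decomposition of $J_X[2]$ with one summand for each piece. There are $r = \#B'$ summands of the first kind and one summand of the second kind for each $\alpha \in B$.

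Next I would identify the summands. For $\alpha \in B'$, Proposition \ref{PactionFV} gives $F(\lambda_{\alpha,0}-\lambda_{0,0}) = 0$ and $V(\lambda_{\alpha,0}-\lambda_{0,0}) = \lambda_{\alpha,0}-\lambda_{0,0}$, while $F\sigma_{\alpha,0} = \sigma_{\alpha,0}$ and $V\sigma_{\alpha,0} = 0$. Thus $W_{\alpha,\sss} \cong \bE/\bE(F,1-V) \oplus \bE/\bE(V,1-F) \cong D(\ZZ/2 \oplus \mu_2)$, which produces the factor $(\ZZ/2 \oplus \mu_2)^r$. For $\alpha \in B$, Proposition \ref{Pfiltration} supplies a final filtration of $W_{\alpha,\nil}$ with $\dim(V(N_{\alpha,i})) = \lfloor i/2 \rfloor$, i.e.\ final type $[0,1,1,2,2,\ldots,\lfloor c_\alpha/2 \rfloor]$. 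As this is precisely the final type defining $G_{c_\alpha}$, the corresponding summand must be $G_{c_\alpha}$, by uniqueness of the symmetric $BT_1$ group scheme with a prescribed final type.

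The step I expect to be the main obstacle is justifying that this last uniqueness argument is legitimately applicable to the individual summands, i.e.\ that each $W_{\alpha,\nil}$ and $W_{\alpha,\sss}$ underlies a \emph{symmetric} (quasi-polarized) $BT_1$ group scheme rather than merely an $\bE$-module. I would handle this by analyzing the symplectic pairing on $\HdR(X)$ induced by the principal polarization of $J_X$: the holomorphic forms $\lambda_{\alpha,j}$ and the lifted classes $\sigma_{\beta,j}$ attached to distinct branch points are built from functions and differentials with poles concentrated at different ramification points, so the pairing should vanish across distinct $\alpha$, while within a single index $\alpha$ the subspace spanned by the $\lambda_{\alpha,j} \in \HH^0(X,\Omega^1)$ pairs perfectly with the subspace spanned by the $\sigma_{\alpha,j}$ under the duality between $\HH^0(X,\Omega^1)$ and $\HH^1(X,\cO)$. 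Verifying this orthogonality exhibits the decomposition of Theorem \ref{Thdr} as an orthogonal sum of non-degenerate symplectic $\bE$-submodules, so each summand is a symmetric $BT_1$ group scheme and the classification by final type applies, completing the identification.
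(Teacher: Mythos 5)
Your proof follows the paper's argument essentially verbatim: transport the $\bE$-module decomposition of Theorem \ref{Thdr} through Oda's isomorphism $D(J_X[2])\cong\HdR(X)$, identify $W_{\alpha,\sss}$ with $D(\ZZ/2\oplus\mu_2)$ and $W_{\alpha,\nil}$ with $D(G_{c_\alpha})$ via Propositions \ref{PactionFV} and \ref{Pfiltration}, and quote Proposition \ref{Panumber} for the $a$-number. Your extra care about why uniqueness of the symmetric $BT_1$ group scheme with a prescribed final type applies to each individual summand is a reasonable point that the paper simply elides (it asserts $W_{\alpha,\nil}\simeq D(G_{c_\alpha})$ directly from the existence of the final filtration), but it does not change the route.
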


\begin{proof}
By \cite[Section 5]{Oda}, there is an isomorphism of $\bE$-modules between the Dieudonn\'e module 
$D(J_X[2])$ and the de Rham cohomology $\HdR(X)$.  
By Theorem \ref{Thdr}, 
there is an isomorphism of $\bE$-modules:
$$
\HdR(X) = 
\bigoplus_{\alpha\in B'} W_{\alpha,\sss} 
\oplus
\bigoplus_{\alpha\in B} W_{\alpha,\nil}.
$$
If $\alpha \in B'$, then $W_{\alpha, \sss}$ is isomorphic to 
$\bE/\bE(F, 1-V) \oplus \bE/\bE(V, 1-F) \simeq D(\ZZ/2 \oplus \mu_2)$.
Finally, Proposition \ref{Pfiltration} shows that $W_{\alpha,\nil} \simeq D(G_{c_\alpha})$, which 
completes the proof of the statement about $J_X[2]$.
The statement about $a_X$ can be found in Proposition \ref{Panumber}.
\end{proof}

As a corollary, we highlight the special case when $r=0$ (i.e., $f(x) \in k[x]$).
Corollary \ref{Cp=2} is stated without proof in \cite[3.2]{V:cycles}.

\begin{corollary} \label{Cp=2}
Let $X$ be a hyperelliptic $k$-curve of genus $g$ and $p$-rank $r=0$.  
Then the Ekedahl-Oort type of $J_X[2]$ is 
$[0,1,1,2,2,\ldots, \lfloor g/2 \rfloor]$ and the $a$-number is $a_X=\lfloor (g+1)/2 \rfloor$. 
\end{corollary}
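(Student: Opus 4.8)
The plan is to obtain Corollary \ref{Cp=2} as the special case $r = 0$ of Theorem \ref{Tintrorepeat}. Since $r = \#B - 1$ in Notation \ref{Nsetup}, the hypothesis $r = 0$ means that the hyperelliptic cover has a single branch point; after a fractional linear transformation I may place it at $\infty$, so that $f(x) \in k[x]$ as in the statement. Writing $d$ for the unique ramification invariant, the Riemann-Hurwitz relation $2g + 2 = \sum_{\alpha \in B}(d_\alpha + 1)$ degenerates to $2g + 2 = d + 1$, so that $d = 2g + 1$ and $c := (d-1)/2 = g$.

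First I would feed this into Theorem \ref{Tintrorepeat}. With $r = 0$ the factor $(\ZZ/2 \oplus \mu_2)^r$ disappears and the direct sum $\bigoplus_{\alpha \in B} G_{c_\alpha}$ has a single summand $G_c = G_g$, so the theorem gives
$$
J_X[2] \simeq G_g.
$$
By the definition of $G_c$ recalled at the beginning of this section, $G_g$ is the symmetric ${\rm BT}_1$ group scheme with Ekedahl-Oort type $[0,1,1,2,2,\ldots,\lfloor g/2 \rfloor]$, which is exactly the asserted final type of $J_X[2]$.

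It remains to pin down the $a$-number, for which I would quote the formula $a_X = (g + 1 - \#\set{\alpha \in B \mid d_\alpha \equiv 1 \bmod 4})/2$ of Proposition \ref{Panumber} and split on the parity of $g$. Because $d = 2g+1$, we have $d \equiv 1 \bmod 4$ precisely when $g$ is even and $d \equiv 3 \bmod 4$ precisely when $g$ is odd. Thus the counting set is a singleton for even $g$, giving $a_X = (g+1-1)/2 = g/2$, and is empty for odd $g$, giving $a_X = (g+1)/2$; in either case $a_X = \lfloor (g+1)/2 \rfloor$.

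I do not expect a genuine obstacle: the corollary is a clean degeneration of Theorem \ref{Tintrorepeat}, and the only thing to watch is the bookkeeping that $r = 0$ forces a single branch point with $c = g$, together with the short parity computation for the $a$-number. As a consistency check, the $a$-number can be read off the Ekedahl-Oort type directly, since $a_X = g - \nu_g = g - \lfloor g/2 \rfloor = \lceil g/2 \rceil = \lfloor (g+1)/2 \rfloor$, in agreement with the value obtained above.
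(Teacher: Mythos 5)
Your proposal is correct and follows the paper's own route exactly: the paper proves the corollary in one line as the special case $\#B=1$ of Theorem \ref{Tintrorepeat}, and your bookkeeping ($d=2g+1$, $c=g$, hence $J_X[2]\simeq G_g$) together with the parity computation for the $a$-number is precisely the unwinding of that specialization.
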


\begin{proof}
This is a special case of Theorem \ref{Tintrorepeat} where $\#B=1$.
\end{proof}

The next result is included to emphasize that Theorem \ref{Tintrorepeat} gives a complete classification of 
the $2$-torsion group schemes which occur as $J_X[2]$ when $X$ is a hyperelliptic $k$-curve.

\begin{corollary}
Let $G$ be a symmetric $BT_1$ group scheme of rank $p^{2g}$.  Let $0 \leq r \leq g$.
Then $G \simeq J_X[2]$ for some hyperelliptic $k$-curve $X$ of genus $g$ and $p$-rank $r$
if and only if there exist $c_1, \ldots, c_{r+1} \in {\mathbb N} \cup \{0\}$ such that $\sum_{i=1}^{r+1}c_i=g-r$ and 
such that 
$$G \simeq  (\ZZ/2 \oplus \mu_2)^r \oplus \bigoplus_{\alpha \in B} G_{c_{\alpha}}.$$
\end{corollary}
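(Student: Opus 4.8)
The plan is to prove the two implications separately, deriving the forward direction directly from Theorem \ref{Tintrorepeat} and the converse from an explicit construction whose invariants are again computed via Theorem \ref{Tintrorepeat}.

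For the forward implication, suppose $G \simeq J_X[2]$ for a hyperelliptic $k$-curve $X$ of genus $g$ and $p$-rank $r$. Since the $p$-rank equals $\#B - 1$, the branch locus consists of exactly $r+1$ points, say $B = \{\alpha_1, \dots, \alpha_{r+1}\}$. Setting $c_i := c_{\alpha_i} = (d_{\alpha_i}-1)/2 \geq 0$ (non-negative because each $d_{\alpha_i}$ is a positive odd integer), the genus formula $g = r + \sum_{\alpha \in B} c_\alpha$ gives $\sum_{i=1}^{r+1} c_i = g - r$, and Theorem \ref{Tintrorepeat} supplies the isomorphism $G \simeq (\ZZ/2 \oplus \mu_2)^r \oplus \bigoplus_{\alpha \in B} G_{c_\alpha}$ of the required shape.

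For the converse, given non-negative integers $c_1, \dots, c_{r+1}$ with $\sum_{i=1}^{r+1} c_i = g-r$, I would build a curve realizing this data. Choose $r+1$ distinct points $\alpha_1, \dots, \alpha_{r+1} \in k$ (possible since $k$ is infinite), put $d_i := 2c_i + 1$, and define $X$ by $y^2 - y = f(x)$ with $f(x) := \sum_{i=1}^{r+1} (x - \alpha_i)^{-d_i}$. This $f$ is already in the normalized partial-fraction form of Notation \ref{Nsetup}: each local term equals $x_{\alpha_i}^{d_i} \in x_{\alpha_i} k[x_{\alpha_i}^2]$, has odd degree $d_i$, and $\infty \notin B$. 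Hence $B = \{\alpha_1, \dots, \alpha_{r+1}\}$ with ramification invariants $d_i$ and $c_{\alpha_i} = c_i$. The Riemann--Hurwitz formula then gives genus $r + \sum_i c_i = g$, matching the prescribed value, and the Deuring--Shafarevich formula gives $p$-rank $\#B - 1 = r$. A final application of Theorem \ref{Tintrorepeat} yields $J_X[2] \simeq (\ZZ/2 \oplus \mu_2)^r \oplus \bigoplus_{i} G_{c_i} \simeq G$.

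The only genuine points to verify are that this construction truly produces a smooth connected hyperelliptic curve with the prescribed invariants, and this is where I expect the mild obstacle to lie. The key check is geometric connectedness: the Artin--Schreier extension $y^2 - y = f(x)$ defines a field exactly when $f(x) \neq w^2 - w$ for all $w \in k(x)$, which holds here because every pole of $w^2 - w$ has even order whereas each $d_i$ is odd, so $f$ has an odd-order pole and cannot be of that form. I would also record that the case $c_i = 0$ (a simple pole, with $G_0$ of rank $p^0 = 1$ trivial) is permitted: such a branch point contributes nothing to the local summand and only enlarges the $p$-rank factor $(\ZZ/2 \oplus \mu_2)^r$ via $r = \#B - 1$, consistently with the range $\mathbb{N} \cup \{0\}$ in the statement. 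Apart from these remarks, both directions reduce immediately to Theorem \ref{Tintrorepeat} together with the two classical ramification formulas, so no serious difficulty remains.
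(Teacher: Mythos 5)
Your proof is correct and follows essentially the same route as the paper, whose own proof is just ``This is immediate from Theorem \ref{Tintrorepeat}.'' The only difference is that you usefully make explicit the realization step for the converse (choosing $f(x)=\sum_i (x-\alpha_i)^{-d_i}$ and checking irreducibility via the odd pole orders), which the paper treats as obvious.
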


\begin{proof}
This is immediate from Theorem \ref{Tintrorepeat}.
\end{proof}

\begin{remark}
For fixed $g$, the number of isomorphism classes of symmetric $BT_1$ group schemes of rank $p^{2g}$
that occur as $J_X[2]$ for some hyperelliptic $k$-curve $X$ of genus $g$ equals the number of partitions of $g+1$.
To see this, note that the isomorphism class of $J_X[2]$ is determined by the multi-set
$\{d_1, \ldots, d_{r+1}\}$ where $d_i=2c_i+1$ and $\sum_{i=1}^{r+1}(d_i+1)=2g+2$.
So the number of isomorphism classes equals the number of partitions of $2g+2$ into even integers.  
\end{remark}

\begin{remark}
The examples in Section \ref{SdesEO} show that the factors appearing in the decomposition of $J_X[2]$ 
in Theorem \ref{Tintrorepeat} may not be indecomposable as symmetric ${\rm BT}_1$ group schemes.
\end{remark}

\subsection{Description of a particular Ekedahl-Oort type} \label{SdesEO}

Recall that $G_c$ is the unique symmetric ${\rm BT}_1$ group scheme of rank $p^{2c}$ with Ekedahl-Oort type
$[0,1,1,2,2, \ldots,  \lfloor c/2 \rfloor]$.  Recall that $\bE=k[F,V]$ is the non-commutative ring defined in Section \ref{Sdefderham}.
In this section, we describe the Dieudonn\'e module $D(G_c)$.  
We start with some examples to motivate the notation.
The examples show that $G_c$ is sometimes indecomposable and sometimes decomposes into
symmetric ${\rm BT}_1$ group schemes of smaller rank.
The first four examples were found using pre-existing tables.

\begin{example}
\begin{enumerate}
\item For $c=1$, the Ekedahl-Oort type is $[0]$. 
This Ekedahl-Oort type occurs for the $p$-torsion group scheme of a supersingular elliptic curve.
See  \cite[Ex.\ A.3.14]{G:book} or \cite[Ex.\ 2.3]{Pr:groupscheme} for a description of $G_1$.
It has Dieudonn\'e module $\bE/\bE(F + V)$. 

\item For $c=2$, the Ekedahl-Oort type is $[0,1]$.  
This Ekedahl-Oort type occurs for the $p$-torsion group scheme of a supersingular abelian surface 
which is not superspecial. 
See \cite[Ex.\ A.3.15]{G:book} or \cite[Ex.\ 2.3]{Pr:groupscheme} for a description of $G_2$.
It has Dieudonn\'e module $\bE/\bE(F^2 + V^2)$.

\item For $c=3$, the Ekedahl-Oort type is $[0,1,1]$.
This Ekedahl-Oort type occurs for an abelian threefold with $p$-rank $0$ and $a$-number $2$ whose $p$-torsion 
is indecomposable as a symmetric $BT_1$ group scheme.
By \cite[Lemma 3.4]{Pr:groupscheme}, $G_3$ has Dieudonn\'e module
$$\bE/\bE(F^{2}+V) \oplus \bE/\bE(V^{2}+F).$$

\item For $c=4$, the Ekedahl-Oort type is $[0,1,1,2]$.
This Ekedahl-Oort type occurs for an abelian fourfold with $p$-rank $0$ and $a$-number $2$ 
whose $p$-torsion decomposes as a direct sum of symmetric $BT_1$ group schemes of rank $p^2$ and $p^6$.
By \cite[Table 4.4]{Pr:groupscheme}, $G_4$ has Dieudonn\'e module
$$\bE/\bE(F + V) \oplus \bE/\bE(F^{3}+V^{3}).$$
\end{enumerate}
\end{example}

We now provide an algorithm to determine the Dieudonn\'e module $D(G_c)$ for all $c \in \NN$ following
the method of \cite[Section 9.1]{O:strat}.

\begin{proposition} \label{Pdieu}
The Dieudonn\'e module $D(G_c)$ is the $\bE$-module generated 
as a $k$-vector space by $\{X_1, \ldots, X_c, Y_1, \ldots, Y_c\}$ 
with the actions of $F$ and $V$ given by:
\begin{enumerate}
\item
$F(Y_j) = 0$.

\item
$V(Y_j) = 
\begin{cases}
Y_{2j} & \text{if $j \leq c/2$,}\\
0 & \text{if $j > c/2$.}
\end{cases}$

\item
$F(X_i)=
\begin{cases}
X_{j/2} & \text{if $j$ is even,}\\
Y_{c-(j-1)/2} & \text{if $j$ is odd.}
\end{cases}$

\item 
$V(X_j)=
\begin{cases}
0 & \text{if $j \leq (c-1)/2$,}\\
-Y_{2c-2j+1} & \text{if $j > (c-1)/2$.}
\end{cases}$
\end{enumerate}
\end{proposition}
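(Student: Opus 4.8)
The plan is to run Oort's algorithm from \cite[Section 9.1]{O:strat} on the final type $\nu_i=\lfloor i/2\rfloor$ and to check that its output is exactly the presented module. Equivalently, I would verify directly that the $\bE$-module $M$ spanned over $k$ by $\{X_1,\dots,X_c,Y_1,\dots,Y_c\}$ with the stated $F$ and $V$ is a symmetric $BT_1$ Dieudonn\'e module whose Ekedahl-Oort type is $[0,1,1,2,2,\dots,\lfloor c/2\rfloor]$; since $G_c$ is by definition the \emph{unique} symmetric $BT_1$ group scheme with this type, Oort's classification then forces $M\cong D(G_c)$. Throughout I would adopt the convention $Y_m=0$ for $m\notin\{1,\dots,c\}$, which is what makes the two branches of the formulas for $V(Y_j)$, $F(X_j)$ and $V(X_j)$ collapse into single rules.

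First I would check that $M$ is a well-defined $\bE$-module of dimension $2c$: that $F$ is $p$-linear and $V$ is $p^{-1}$-linear (automatic from sending basis vectors to basis vectors), and that $FV=VF=0$ on each generator, which is a short case-check on parities once the convention $Y_m=0$ is in force (for instance $VF(X_c)=V(X_{c/2})=-Y_{c+1}=0$ when $c$ is even). Along the way one reads off $\ker F=\mathrm{im}\,V=\langle Y_1,\dots,Y_c\rangle$, so this subspace must be the middle step $N_c=VD=F^{-1}(0)$ of any final filtration. Next I would produce the final filtration $N_1\subset\cdots\subset N_{2c}$ explicitly. The clean formulas are written in an ordering adapted to the index-\emph{doubling} dynamics of $F$ and $V$ rather than to the filtration, so the filtration order is a nontrivial permutation of the basis dictated by the orbit structure of index-doubling (the chains $m,2m,4m,\dots$ on the index set). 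I would order the $Y$'s by placing the orbit-ends — the indices killed by $V$ — first, as forced by $\dim V(N_i)=\lfloor i/2\rfloor$, then lift the $X$'s on top, and verify stability under both $V$ and $F^{-1}$ together with the counts $V(N_i)=N_{\lfloor i/2\rfloor}$ and $F^{-1}(N_i)=N_{c+\lceil i/2\rceil}$, exactly as in Proposition~\ref{Pfiltration}.

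It then remains to confirm that $M$ is symmetric, i.e. that this final filtration is self-dual. For this I would exhibit a perfect alternating pairing, anti-diagonal in the filtration-adapted basis, with respect to which $F$ and $V$ are adjoint in the sense appropriate to $p$-linear and $p^{-1}$-linear operators, so that $N_i^{\perp}=N_{2c-i}$; uniqueness of the symmetric $BT_1$ of the given type then completes the identification $M\cong D(G_c)$.

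I expect the reordering in the middle step to be the main obstacle: writing down the permutation carrying the display basis to a filtration-adapted basis, and checking simultaneous stability under $V$ and $F^{-1}$ through all parities of $c$ and $i$, is the delicate bookkeeping. As a cross-check, or an alternative route, I would compare $M$ with the module $W_{\alpha,\nil}$ of Proposition~\ref{PactionFV}, which Proposition~\ref{Pfiltration} and Theorem~\ref{Tintrorepeat} already identify with $D(G_{c_\alpha})$; matching the two amounts to absorbing the curve-dependent coefficients $a_{\alpha,i}$ by a $p$- and $p^{-1}$-linear rescaling of the $\lambda_{\alpha,j}$ and $\sigma_{\alpha,j}$. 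This is the same normalization seen from the other side, and it provides independent confirmation that the constants wash out, consistent with the uniqueness of $G_c$.
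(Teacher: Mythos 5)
Your opening sentence is, in fact, the paper's entire proof: from $\nu_i=\lfloor i/2\rfloor$ one observes that $\nu_i=\nu_{i-1}$ exactly when $i$ is odd, reads off $m_i=2i$ and $n_i=2c-2i+1$ in the notation of \cite[Section 9.1]{O:strat}, sets $Z_i$ equal to $X_{i/2}$ or $Y_{c-(i-1)/2}$ according to the parity of $i$, and the four displayed formulas are then a verbatim transcription of Oort's recipe $F(Y_i)=0$, $F(X_i)=Z_i$, $V(Z_i)=0$, $V(Z_{2c-i+1})=(-1)^{i-1}Y_i$. On that route you are done in a few lines, and symmetry and uniqueness come for free because Oort's algorithm is a construction of the symmetric $BT_1$ group scheme with the prescribed final type. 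Your elaborated alternative --- verify directly that the displayed $\bE$-module is a symmetric $BT_1$ Dieudonn\'e module of type $[0,1,1,\ldots,\lfloor c/2\rfloor]$ and invoke uniqueness --- is also sound, but strictly heavier: the filtration-adapted reordering and the verification $V(N_i)=N_{\lfloor i/2\rfloor}$, $F^{-1}(N_i)=N_{c+\lceil i/2\rceil}$ that you correctly flag as the delicate bookkeeping is precisely the computation the paper performs elsewhere (Proposition~\ref{Pfiltration} for $W_{\alpha,\nil}$, and by hand for $c=7$ in Example~\ref{Ec=7}), and the self-duality of the filtration, which on your route must be exhibited via an explicit alternating pairing compatible with $F$ and $V$, is exactly the step that citing Oort's construction lets one skip. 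Your proposed cross-check against $W_{\alpha,\nil}$ is legitimate and not circular, since the identification $W_{\alpha,\nil}\simeq D(G_{c_\alpha})$ in Theorem~\ref{Tintrorepeat} rests on Proposition~\ref{Pfiltration} and the definition of $G_c$, not on Proposition~\ref{Pdieu}; it would indeed confirm that the coefficients $a_{\alpha,i}$ wash out.
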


\begin{proof}
By definition of $G_c$, there is a final filtration $N_1 \subset N_2 \subset \cdots \subset N_{2c}$ 
of $D(G_c)$ as a $k$-vector space, 
which is stable under the action of $V$ and $F^{-1}$ and with $i={\rm dim}(N_i)$, 
such that $\nu_i:={\rm dim}(V(N_i))=\lfloor i/2 \rfloor$.
This implies that $\nu_i=\nu_{i-1}$ if and only if $i$ is odd.
In the notation of \cite[Section 9.1]{O:strat}, this yields
$m_i=2i$ and $n_i=2g-2i+1$ for $1 \leq i \leq g$;
also, let
$$Z_i=
\begin{cases}
X_{i/2} & \text{if $i$ is even,}\\
Y_{c-(i-1)/2} & \text{if $i$ is odd}.
\end{cases}$$
By \cite[Section 9.1]{O:strat}, for $1 \leq i \leq g$, the action of $F$ is given by
$F(Y_i)=0$ and $F(X_i)=Z_i$; and the action of $V$ is given by
$V(Z_i)=0$ and $V(Z_{2g-i+1})= (-1)^{i-1}Y_i$.
\end{proof}

More notation is needed to give an explicit description of $D(G_c)$.

\begin{notation} \label{Nbij}
Let $c \in \NN$ be fixed. 
Let $I=\{j \in \NN \mid \lceil (c+1)/2 \rceil \leq j \leq c\}$ which is a set of cardinality $\lfloor (c+1)/2 \rfloor$.
For $j \in I$, let $\ell(j)$ be the odd part of $j$ and let $e(j)$ be the non-negative integer such that $j=2^{e(j)}\ell(j)$.
Let $s(j)=c-(\ell(j)-1)/2$.  
One can check that $\{s(j) \mid j \in I\}=I$.
Also, let $m(j)=2c-2j+1$ and let $\epsilon(j)$ be the non-negative integer 
such that $t(j):=2^{\epsilon(j)}m(j) \in I$.  
One can check that $\{t(j) \mid j \in I\}=I$.
Thus there is a unique bijection $\iota:I \to I$ such that $t(\iota(j))=s(j)$ for each $j \in I$.
\end{notation}

\begin{proposition} \label{Palg}
Recall Notation \ref{Nbij}.
For $c \in \NN$, the set $\{X_j \mid j \in I\}$ generates the Dieudonn\'e module $D(G_c)$ as an $\bE$-module
subject to the relations: $F^{e(j)+1}(X_j) + V^{\epsilon(\iota(j))+1}(X_{\iota(j)})$ for $j \in I$.
Also, $\{X_j \mid j \in I\}$ is a basis for $D(G_c)$ modulo $D(G_c)(F,V)$.
\end{proposition}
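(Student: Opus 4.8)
The plan is to iterate the $F$- and $V$-actions of Proposition \ref{Pdieu} on each generator $X_j$ with $j\in I$, read off the two families of monomial identities that appear, verify that they produce exactly the stated relations, and then close the argument with a dimension count against $\dim_k D(G_c)=2c$. First I would record the key monomial identities. Writing $j=2^{e(j)}\ell(j)$ with $\ell(j)$ odd, repeated use of part~(3) of Proposition \ref{Pdieu} gives $F^{e(j)}(X_j)=X_{\ell(j)}$ and hence $F^{e(j)+1}(X_j)=Y_{c-(\ell(j)-1)/2}=Y_{s(j)}$, while one further application together with $F(Y_i)=0$ shows $F^{e(j)+2}(X_j)=0$. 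Dually, part~(4) gives $V(X_j)=-Y_{m(j)}$ (valid because $j\in I$ forces $j>(c-1)/2$), and doubling the index via part~(2) until it lands in $I$ yields $V^{\epsilon(j)+1}(X_j)=-Y_{t(j)}$, with $V^{\epsilon(j)+2}(X_j)=0$ since every index in $I$ exceeds $c/2$ and is therefore annihilated by $V$. Because $\iota$ is defined by $t(\iota(j))=s(j)$ in Notation \ref{Nbij}, the stated relation is then immediate: $F^{e(j)+1}(X_j)+V^{\epsilon(\iota(j))+1}(X_{\iota(j)})=Y_{s(j)}-Y_{t(\iota(j))}=0$ in $D(G_c)$.

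Next I would prove generation together with the basis-modulo-$(F,V)$ assertion. Iterating $F^a(X_j)=X_{j/2^a}$ for $0\le a\le e(j)$ shows that the sets $\{X_j,X_{j/2},\dots,X_{\ell(j)}\}$ run, as $j$ ranges over $I$, over every $X_m$ with $1\le m\le c$ exactly once, since each $m$ lies in a unique doubling chain whose top is the element of $I$ it determines; likewise $\{V^b(X_j):1\le b\le\epsilon(j)+1\}$ runs over every $Y_k$, because the bottoms $m(j)=2c-2j+1$ exhaust the odd indices and doubling sweeps out each chain. Hence $\{X_j\mid j\in I\}$ generates $D(G_c)$ as an $\bE$-module. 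For the quotient statement I would compute $F D(G_c)+V D(G_c)$ directly from Proposition \ref{Pdieu}: it lies in $\langle X_n:2n\le c\rangle\oplus\langle Y_k:1\le k\le c\rangle$, yet contains every such $X_n$ and every $Y_k$, so it meets $\langle X_j\mid j\in I\rangle$ trivially (each such generator has index exceeding $c/2$ and no $Y$-component). This gives both spanning and $k$-linear independence of $\{X_j\mid j\in I\}$ modulo $F D(G_c)+V D(G_c)$.

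Finally I would upgrade generation to a presentation by counting dimensions in the free $\bE$-module on $\{X_j\mid j\in I\}$ modulo the stated relations. Applying $F$ (respectively $V$) to each relation and using $FV=VF=0$ forces $F^a(X_j)=0$ for $a\ge e(j)+2$ and $V^b(X_j)=0$ for $b\ge\epsilon(j)+2$, and identifies each top $F$-power $F^{e(j)+1}(X_j)$ with the top $V$-power $-V^{\epsilon(\iota(j))+1}(X_{\iota(j)})$. Consequently $\{F^a(X_j):0\le a\le e(j)\}\cup\{V^b(X_j):1\le b\le\epsilon(j)+1\}$ spans the quotient over $k$, a set of size $\sum_{j\in I}(e(j)+1)+\sum_{j\in I}(\epsilon(j)+1)$. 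Since the $F$-orbits of the $X_j$ partition $\{X_1,\dots,X_c\}$ and the $V$-orbits partition $\{Y_1,\dots,Y_c\}$, both sums equal $c$, so the quotient has dimension at most $2c$; as the canonical surjection onto the $2c$-dimensional module $D(G_c)$ must then be an isomorphism, no relations beyond those stated can hold, completing the presentation.

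The main obstacle is the combinatorial bookkeeping needed for the two counting identities $\sum_{j\in I}(e(j)+1)=\sum_{j\in I}(\epsilon(j)+1)=c$, equivalently the claims that the $F$-orbits of the $X_j$ ($j\in I$) tile the $X_m$ and the $V$-orbits tile the $Y_k$. This is precisely the content of the bijectivity of $s$ and $t$ (and hence the well-definedness of $\iota$) asserted in Notation \ref{Nbij}, so the crux is to confirm that every $j\in I$ is the top of its doubling chain and that $m(j)$ exhausts the odd residues $\le c$; once these are in place, the dimension count forces the presentation with no further work.
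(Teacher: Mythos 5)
Your proof is correct and its core is the same as the paper's: derive the relations by iterating the $F$- and $V$-actions of Proposition \ref{Pdieu} ($F^{e(j)}(X_j)=X_{\ell(j)}$, $F(X_{\ell(j)})=Y_{s(j)}$, $V^{\epsilon(j)+1}(X_j)=-Y_{t(j)}$, and the definition of $\iota$), and prove generation by observing that the $F$-chains through the $X_j$ and the $V$-chains through the $Y_{m(j)}$ sweep out all the $k$-module generators of Proposition \ref{Pdieu}. You diverge in two places, both to your credit. For the claim that $\{X_j \mid j\in I\}$ is a basis modulo $D(G_c)(F,V)$, the paper cites the identity $a=\dim_k D(G_c)/D(G_c)(F,V)$ from \cite[5.2.8]{LO} together with $a=|I|$ from Corollary \ref{Cp=2}, whereas you compute $FD(G_c)+VD(G_c)=\langle X_n \mid 2n\leq c\rangle\oplus\langle Y_k \mid 1\leq k\leq c\rangle$ directly and note it meets $\langle X_j\mid j\in I\rangle$ trivially; your version is self-contained and avoids the external references. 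More substantively, you add a dimension count on the free $\bE$-module modulo the stated relations (using $FV=VF=0$ and the two tiling identities $\sum_{j\in I}(e(j)+1)=\sum_{j\in I}(\epsilon(j)+1)=c$) to show the quotient has dimension at most $2c$, so that the listed relations genuinely present $D(G_c)$. The paper's proof only exhibits the relations and the generation, leaving the completeness of the presentation implicit, so this last step is a worthwhile addition rather than redundancy.
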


\begin{proof}
By Proposition \ref{Pdieu}, $F^{e(j)}(X_j)=X_{\ell(j)}$ and $F(X_{\ell(j)})=Y_{s(j)}$.
Also, $V(X_j)=-Y_{m(j)}$ and so $V^{\epsilon(j)+1}(X_j)=-Y_{t(j)}$.
This yields the stated relations.
To complete the first claim, it suffices to show that the span of $\{X_j \mid j \in I\}$ under the action of $F$ and $V$ 
contains the $k$-module generators of $D(C_c)$ listed in Proposition \ref{Pdieu}.
This follows from the observations that $X_i=F(X_{2i})$ if $1 \leq i \leq \lfloor c/2 \rfloor$,
that $Y_i=V(Y_{i/2})$ if $i$ is even and $Y_i=V(-X_{c-(i-1)/2})$ if $i$ is odd.   
By \cite[5.2.8]{LO}, the dimension of $D(G_c)$ modulo $D(G_c)(F,V)$ equals the $a$-number. 
Since $a=|I|$ by Corollary \ref{Cp=2}, it follows that the set $|I|$ of generators of $D(G_c)$ is 
linearly independent modulo $D(G_c)(F,V)$.
\end{proof}

Here are some more examples.  The columns of the following table list: the value of $c$; 
the generators of $D(G_c)$ as an $\bE$-module (where $X_{i_1} - X_{i_2}$ denotes $\{X_i \mid i_1 \leq i \leq i_2\}$); 
the relations among these generators; and the number of (possibly unsymmetric)
indecomposable summands of $D(G_c)$.  The table can be verified in two ways: first, by checking it with 
Proposition \ref{Palg}; second, by computing the action of $F$ and $V$ on a $k$-basis for $D(G_c)$, using this to
construct a final filtration of $D(G_c)$ stable under $V$ and $F^{-1}$, and then checking that it matches the 
Ekedahl-Oort type of $G_c$.  In Example \ref{Ec=7}, we illustrate the second method. 

\begin{center}
$\begin{array}{|c|c|l|c|}
\hline
c& \text{generators} & \text{relations} & \text{\# summands}\\
\hline
5 & X_3 - X_5 & FX_3+V^3X_5, F^3X_4 + VX_3, FX_5+ VX_4 & 1 \\
\hline
6 & X_4 - X_6 & F^3X_4 +V^2X_5, FX_5 +V^3X_6, F^2X_6+VX_4 & 1\\ 
\hline
7 & X_4 - X_7 & F^3X_4+VX_4, FX_5+VX_5, F^2X_6 + V^2X_6, FX_7+V^3X_7 & 4\\
\hline
8 & X_5 - X_8 & FX_5+V^2X_7, F^2X_6+VX_5, FX_7 + VX_6, F^4X_8+V^4X_8 & 2\\
\hline 
9 & X_5 - X_9 & FX_5+VX_6, F^2X_6+V^4X_9, FX_7+V^2X_8, & \\
& & F^4X_8 +VX_5, FX_9+VX_7 & 1\\
\hline
10 & X_6 - X_{10} & F^2X_6+VX_6, FX_7+VX_7, F^4X_8+V^2X_8, & \\ 
& & FX_9+V^2X_9, F^2X_{10}+V^4X_{10} & 5\\
\hline
\end{array}$
\end{center}

\begin{example} \label{Ec=7}
For $c=7$, the group scheme $G_7$ with Ekedahl-Oort type $[0,1,1,2,2,3,3]$
is isomorphic to a direct sum of symmetric $BT_1$ group schemes of ranks
$p^2$, $p^4$ and $p^{8}$
and has Dieudonn\'e module
$${\bf M}:=\bE/\bE(F + V) \oplus \bE/\bE(F^2+V^{2})\oplus \bE/\bE(V+F^{3})
\oplus \bE/\bE(F^{3}+V).$$
\end{example}

\begin{proof}
Let $\set{1_A, V_A}$ be the basis of the submodule $A=\bE/\bE(F + V)$ of ${\bf M}$; 
let $\set{1_B, V_B, V_B^2, F_B^2}$ be the basis of the submodule $B=\bE/\bE(F^2 + V^2)$;
let $\set{1_C, V_C, V_C^2, V_C^3}$ be the basis of the submodule $C=\bE/\bE(F +V^3)$;
and let $\set{1_{C'}, F_{C'}, F_{C'}^2, F_{C'}^3}$ be the basis of the submodule $C'=\bE/\bE(F^3 + V)$.
The action of Frobenius and Verschiebung on the elements of these bases is:
\begin{center}
\begin{tabular}{|c||cc|cccc|cccc|cccc|}
\hline
$x$ & $1_A$ & $V_A$ & $1_B$ & $V_B$ & $V_B^2$ & $F_B$ & $1_C$ & $V_C$ &
$V_C^2$ & $V_C^3$ & $1_{C'}$ & $F_{C'}$ & $F_{C'}^2$ & $F_{C'}^3$ \\
\hline 
\hline 
$Vx$ & $V_A$ & $0$ & $V_B$ & $V_B^2$ & $0$ & $0$ & $V_C$ & $V_C^2$ & $V_C^3$
& $0$ & $F_{C'}^3$ & $0$ & $0$ & $0$ \\
\hline
$Fx$ & $V_A$ & $0$ & $F_B$ & $0$ & $0$ & $V_B^2$ & $V_C^3$ & $0$ & $0$ & $0$ &
$F_{C'}$ & $F_{C'}^2$ & $F_{C'}^3$ & $0$ \\
\hline
\end{tabular}
\end{center}
To verify the proposition, one can repeatedly apply $V$ and $F^{-1}$ to construct a filtration  
$N_1 \subset N_2 \subset \cdots \subset N_{14}$ of ${\bf M}$ as a $k$-vector space 
which is stable under the action of $V$ and $F^{-1}$ such that $i={\rm dim}(N_i)$. 
To save space, we summarize the calculation by listing a generator $t_i$ for $N_i/N_{i-1}$:
\begin{center}
\begin{tabular}{|c|cccccccccccccc|}
\hline
$i$ & 1 & 2 & 3 & 4 & 5 & 6 & 7 & 8 & 9 & 10 & 11 & 12 & 13 & 14\\
\hline 
$t_i$ & $V_C^3$&$V_C^2$&$V_B^2$&$V_C$&$V_{A}$ &$ F_{C'}^3$&$V_B$ &
$1_C$&$F_{C'}^2$&$1_{A}$&$F_B$&$F_{C'}$&$1_{C'}$&$1_{B}$ \\
\hline
\end{tabular}
\end{center}

Then one can check that $V(N_i)=N_{\lfloor i/2 \rfloor}$ and $F^{-1}(N_i)=N_{7+\lceil i/2 \rceil}$, 
which verifies that the Ekedahl-Oort type of ${\bf M}$ is $[0,1,1,2,2,3,3]$.
\end{proof}

\begin{remark}
One could ask when $D(G_c)$ decomposes as much as numerically possible, 
in other words, when the number of (possibly unsymmetric) indecomposable summands of $D(G_c)$ equals the $a$-number.
For example, $D(G_c)$ has this property when $c \in \{1-4, 7, 10\}$ but not when $c \in \{5,6,8,9\}$.
This phenomenon occurs if and only if the bijection $\iota$ from Notation \ref{Nbij} is the identity.
\end{remark}

\begin{remark}
The group scheme $G_8$ decomposes as the direct sum of two indecomposable symmetric ${\rm BT}_1$ group schemes, 
one whose Ekedahl-Oort type is $[0,0,1,1]$, and the other whose covariant Dieudonn\'e module is $\bE/\bE(F^4+V^4)$. 
We take this opportunity to note that there is a mistake in \cite[Example in Section 3.3]{Pr:groupscheme}.
The covariant Dieudonn\'e module of $I_{4,3}=[0,0,1,1]$ is stated incorrectly.
To fix it, consider the method of \cite[Section 9.1]{O:strat}.
Consider the $k$-vector space of dimension $8$ generated by $X_1, \ldots, X_4$ and $Y_1, \ldots Y_4$.
Consider the operation $F$ defined by: $F(Y_i)=0$ for $1 \leq i \leq 4$ and 
$$F(X_1)=Y_4; \ F(X_2)=Y_3; \ F(X_3)=X_1; \ F(X_4)=Y_2.$$
Consider the operation $V$ defined by: 
$$V(X_1)=0; \ V(X_2)=-Y_4; \ V(X_3)=-Y_2; \ V(X_4)=-Y_1;$$
and 
$$V(Y_1)=Y_3; V(Y_2)=0; \ V(Y_3)=0; V(Y_4)=0.$$  
Thus $D(I_{4,3})$ is generated by $X_2,X_3, X_4$ modulo the relations 
$$FX_2+V^2X_4, F^2X_3 +VX_2, VX_3 +FX_4.$$ 
\end{remark}

\subsection{Newton polygons}

There are several results in characteristic $2$ about the Newton polygons of hyperelliptic 
(e.g., Artin-Schreier) curves $X$ of genus $g$ and $2$-rank $0$.
For example, \cite[Remark 3.2]{blache08} states that if $2^{n-1} - 1 \leq g \leq 2^n-2$, 
then the generic first slope of the Newton polygon of an Artin-Schreier curve of genus $g$ and $2$-rank $0$ is $1/n$.
This statement is made more precise in \cite[Thm.\ 4.3]{blache09}.  
See also earlier work in \cite[Thm.\ 1.1(III)]{SZ:02}.  


The Ekedahl-Oort type of $J_X[2]$ gives information about the Newton polygon of $X$, 
but does not determine it completely.  
Using Corollary \ref{Cp=2} and \cite[Section 3.1, Theorem 4.1]{harashita}, 
one can show that the first slope of the Newton polygon of $X$ is at least $1/n$.  
Since this is weaker than \cite[Thm.\ 4.3]{blache09}, we do not include the details.

More generally, one could consider the case that $X$ is a hyperelliptic $k$-curve of genus $g$ and 
arbitrary $p$-rank.  One could use Theorem \ref{Tintrorepeat} 
to give partial information (namely a lower bound) for the Newton polygon of $X$.



\bibliographystyle{plain}
\bibliography{EOhypstrata}

\begin{thebibliography}{10}

\bibitem{blache09}
R.~Blache.
\newblock First vertices for generic {N}ewton polygons, and {$p$}-cyclic
  coverings of the projective line.
\newblock arXiv:0912.2051.

\bibitem{blache08}
R.~Blache.
\newblock {$p$}-density, exponential sums and {A}rtin-{S}chreier curves.
\newblock arXiv:0812.3382.

\bibitem{MAGMA}
W.~Bosma, J.~Cannon, and C.~Playoust.
\newblock The magma algebra system i: the user language.
\newblock {\em J. Symb. Comput.}, 24(3-4):235--265, 1997.

\bibitem{B}
I.~Bouw.
\newblock The {$p$}-rank of ramified covers of curves.
\newblock {\em Compositio Math.}, 126(3):295--322, 2001.

\bibitem{Cartier}
P~Cartier.
\newblock Une nouvelle op{\'e}ration sur les formes diff{\'e}rentielles.
\newblock {\em Lecture Notes Ser. Comput.}, 244:426--428, 1957.

\bibitem{Crew}
R.~Crew.
\newblock \'{E}tale {$p$}-covers in characteristic {$p$}.
\newblock {\em Compositio Math.}, 52(1):31--45, 1984.

\bibitem{Demazure}
M.~Demazure.
\newblock {\em Lectures on {$p$}-divisible groups}, volume 302 of {\em Lecture
  Notes in Mathematics}.
\newblock Springer-Verlag, Berlin, 1986.
\newblock Reprint of the 1972 original.

\bibitem{Ekedahl}
T.~Ekedahl.
\newblock On supersingular curves and abelian varieties.
\newblock {\em Math. Scand.}, 60(2):151--178, 1987.

\bibitem{El:bound}
A.~Elkin.
\newblock The rank of the {C}artier operator on cyclic covers of the projective
  line.
\newblock to appear in J. Algebra, arXiv:0708.0431.

\bibitem{G:book}
E.~Goren.
\newblock {\em Lectures on {H}ilbert modular varieties and modular forms},
  volume~14 of {\em CRM Monograph Series}.
\newblock American Mathematical Society, Providence, RI, 2002.
\newblock With M.-H. Nicole.

\bibitem{harashita}
S.~Harashita.
\newblock Ekedahl-{O}ort strata and the first {N}ewton slope strata.
\newblock {\em J. Algebraic Geom.}, 16(1):171--199, 2007.

\bibitem{John}
O.~Johnston.
\newblock A note on the $a$-numbers and $p$-ranks of {K}ummer covers.
\newblock arXiv:0710.2120.

\bibitem{Kraft}
H.~Kraft.
\newblock Kommutative algebraische $p$-gruppen (mit anwendungen auf
  $p$-divisible gruppen und abelsche variet\"aten).
\newblock manuscript, University of Bonn, September 1975, 86 pp.

\bibitem{LO}
K.-Z. Li and F.~Oort.
\newblock {\em Moduli of supersingular abelian varieties}, volume 1680 of {\em
  Lecture Notes in Mathematics}.
\newblock Springer-Verlag, Berlin, 1998.

\bibitem{madden}
D.~Madden.
\newblock Arithmetic in generalized {A}rtin-{S}chreier extensions of {$k(x)$}.
\newblock {\em J. Number Theory}, 10(3):303--323, 1978.

\bibitem{M:group}
B.~Moonen.
\newblock Group schemes with additional structures and {W}eyl group cosets.
\newblock In {\em Moduli of abelian varieties (Texel Island, 1999)}, volume 195
  of {\em Progr. Math.}, pages 255--298. Birkh\"auser, Basel, 2001.

\bibitem{NS:hyper}
E.~Nart and D.~Sadornil.
\newblock Hyperelliptic curves of genus three over finite fields of even
  characteristic.
\newblock {\em Finite Fields Appl.}, 10(2):198--220, 2004.

\bibitem{Oda}
T.~Oda.
\newblock The first de {R}ham cohomology group and {D}ieudonn{\'e} modules.
\newblock {\em Ann. Sci. Ecole Norm. Sup. (4)}, 2:63--135, 1969.

\bibitem{O:strat}
F.~Oort.
\newblock A stratification of a moduli space of abelian varieties.
\newblock In {\em Moduli of abelian varieties (Texel Island, 1999)}, volume 195
  of {\em Progr. Math.}, pages 345--416. Birkh\"auser, Basel, 2001.

\bibitem{Pr:groupscheme}
R.~Pries.
\newblock A short guide to {$p$}-torsion of abelian varieties in characteristic
  {$p$}.
\newblock In {\em Computational arithmetic geometry}, volume 463 of {\em
  Contemp. Math.}, pages 121--129. Amer. Math. Soc., Providence, RI, 2008.
\newblock math.NT/0609658.

\bibitem{SZ:02}
J.~Scholten and H.~J. Zhu.
\newblock Hyperelliptic curves in characteristic 2.
\newblock {\em Int. Math. Res. Not.}, (17):905--917, 2002.

\bibitem{Se:lf}
J.-P. Serre.
\newblock {\em Corps Locaux}.
\newblock Hermann, 1968.

\bibitem{sti}
H.~Stichtenoth.
\newblock {\em Algebraic function fields and codes}, volume 254 of {\em
  Graduate Texts in Mathematics}.
\newblock Springer-Verlag, Berlin, second edition, 2009.

\bibitem{Sullivan}
F.~Sullivan.
\newblock {$p$}-torsion in the class group of curves with too many
  automorphisms.
\newblock {\em Arch. Math. (Basel)}, 26:253--261, 1975.

\bibitem{V:cycles}
G.~van~der Geer.
\newblock Cycles on the moduli space of abelian varieties.
\newblock In {\em Moduli of curves and abelian varieties}, Aspects Math., E33,
  pages 65--89. Vieweg, Braunschweig, 1999.
\newblock arXiv:alg-geom/9605011.

\bibitem{Y}
N.~Yui.
\newblock On the {J}acobian varieties of hyperelliptic curves over fields of
  characteristic {$p>2$}.
\newblock {\em J. Algebra}, 52(2):378--410, 1978.

\end{thebibliography}

\end{document}